\newtheorem{thm}{Theorem}[section]
\newtheorem{cor}[thm]{Corollary}
\newtheorem{lem}[thm]{Lemma}
\newtheorem{prop}[thm]{Proposition}
\theoremstyle{definition}
\DeclareMathOperator{\ext}{Ext}
\def\m {\mathfrak  m}
\begin{document}

\title[Buchsbaumness of the second powers of edge ideals]{Buchsbaumness of the  second powers of edge ideals}

\author{Do  Trong Hoang}

\email{dthoang@math.ac.vn}
\address{Institute of Mathematics, Vietnam Academy of Science and Technology, 18 Hoang Quoc Viet, 10307 Hanoi, Vietnam}

\author{Tran Nam Trung}

\email{tntrung@math.ac.vn}
\address{Institute of Mathematics, Vietnam Academy of Science and Technology, 18 Hoang Quoc Viet, 10307 Hanoi, Vietnam}

\subjclass[2010]{05E40, 05E45, 13C05, 13C14, 13F55}
\keywords{$W_2$ graphs, Edge ideal,  Cohen-Macaulay,  Gorenstein, Buchsbaum}

\date{}
\dedicatory{}
\commby{}

\dedicatory{Dedicated to Professor Le Tuan Hoa on the occasion of his sixtieth birthday}

\begin{abstract} We graph-theoretically characterize the class of graphs $G$ such that $I(G)^2$ are Buchsbaum.
\end{abstract}
\maketitle
\section*{Introduction}

Throughout this paper let $G=(V(G),E(G))$ be a finite simple graph without isolated vertices. An {\it independent set} in $G$ is a set of  vertices no two of which are adjacent to each other. The size of the largest independent set, denoted by $\alpha(G)$, is called the {\it independence number} of $G$. A graph is called {\it well-covered}
if every maximal independent set has the same size. A well-covered graph $G$ is a member of the class $W_2$ if the  remove any vertex of $G$ leaves a well-covered graph with the same independence number as $G$ (see e.g. \cite{P}).

Let $R=K[x_1,\ldots,x_n]$ be a polynomial ring of $n$ variables over a given field $K$. Let $G$ be a simple graph on the vertex set $V(G)=\{x_1, \ldots , x_n\}$. We associate to the graph $G$ a quadratic squarefree monomial ideal $$I(G) = (x_ix_j  \mid x_ix_j \in E(G)) \subseteq R,$$ which is called the {\it edge ideal} of $G$. We say that $G$ is {\it Cohen-Macaulay} (resp. {\it Gorenstein}) if $I(G)$ is a Cohen-Macaulay (resp. Gorenstein) ideal. It is known that $G$ is well-covered whenever it is Cohen-Macaulay (see e.g. \cite[Proposition $6.1.21$]{Vi}) and $G$ is in $W_2$ whenever it is Gorenstein (see e.g. \cite[Lemma 2.5]{HT}). It is a wide open problem to characterize graph-theoretically the Cohen-Macaulay (resp. Gorenstein) graphs. This problem was considered for certain classes of graphs (see \cite{HH, HHZ, HMT2, HT}).  Generally, we cannot read off the Cohen-Macaulay and  Gorenstein  properties of $G$  just from its structure because these properties   in fact depend on the characteristic of the base field $K$ (see \cite[Exercise $5.3.31$]{Vi} and \cite[Proposition 2.1]{HT}).

If we move on to the higher powers of $I(G)$, then we can graph-theoretically characterize $G$ such that $I(G)^m$ is Cohen-Macaulay (or Buchsbaum, or generalized Cohen-Macaulay)  for some $m\geqslant 3$ (and for all $m\geqslant 1$) (see \cite{GT,RTY1,TT}).  For the second power, we proved that $I(G)^2$ is Cohen-Macaulay if and only if $G$ is a {\it triangle-free} graph in $W_2$ (see \cite{HT}). As a consequence one can easily answer the question when $I(G)^2$ is generalized Cohen-Macaulay (see Theorem \ref{gCM}).

The remaining problem is to characterize $G$ such that $I(G)^2$ is Buchsbaum. For a vertex $v$ of  $G$, let $G_v$ be the induced subgraph $G\setminus (\{v\}\cup N_G(v))$ of $G$. In this paper, we will call $G$ a {\it locally triangle-free} graph if $G_v$ is triangle-free for any vertex $v$ of $G$. It is worth mentioning that \cite[Theorem $2.1$]{RTY2} and \cite[Theorem $4.4$]{HT} suggest that $G$ may be a locally triangle-free Gorenstein graph if $I(G)^2$ is Buchsbaum. So it is natural to characterize such graphs. Note that they are in $W_2$ by \cite[Proposition $3.7$]{HT}. In this paper we will settle this problem when we obtain a characterization of locally triangle-free graphs in $W_2$ (see Theorem $\ref{main-theorem}$). Let $C_n^c$ be the complement of the cycle $C_n$ of length $n$. Then,

\medskip

\noindent {\bf Theorem 1 {\rm (Theorem \ref{thmGorenstein})}. } {\it Let $G$ be a locally triangle-free graph. Then  $G$ is  Gorenstein if and only if $G$ is either a triangle-free graph in $W_2$, or $G$ is isomorphic to one of  $C_n^c$ ($n\geqslant 6$), $Q_{9}$, $Q_{12}$, $P_{10}$ or $P_{12}$ (see Figure \ref{fig_fourgraphs}).}
\begin{figure}[H]
\begin{tabular}{ccc}
 \includegraphics[scale=0.55]{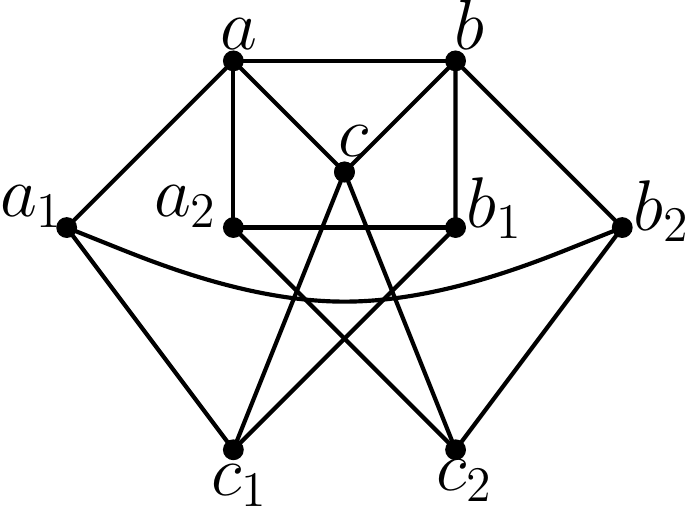}&\qquad\qquad&
 \includegraphics[scale=0.55]{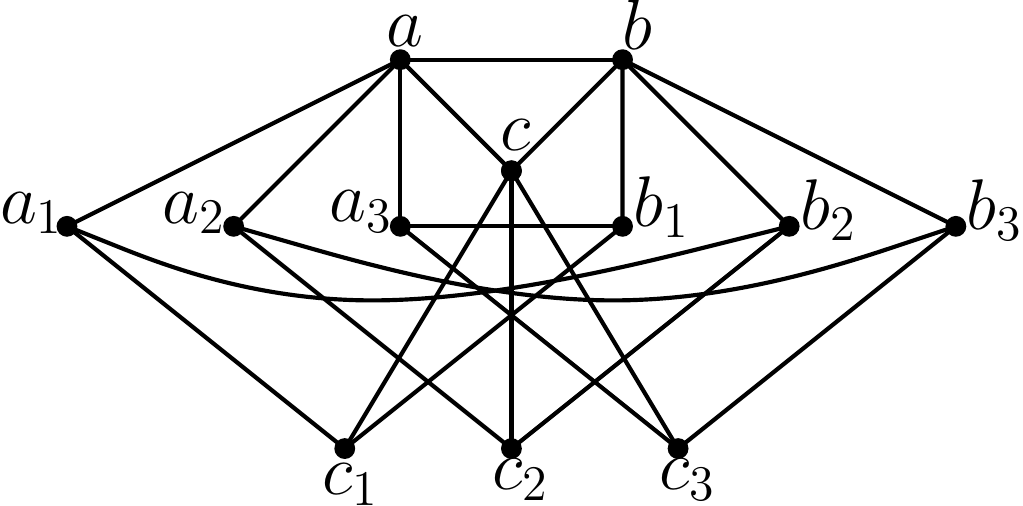}
 \\[6pt] {$Q_{9}$} && {$Q_{12}$}\\[6pt]
 \includegraphics[scale=0.45]{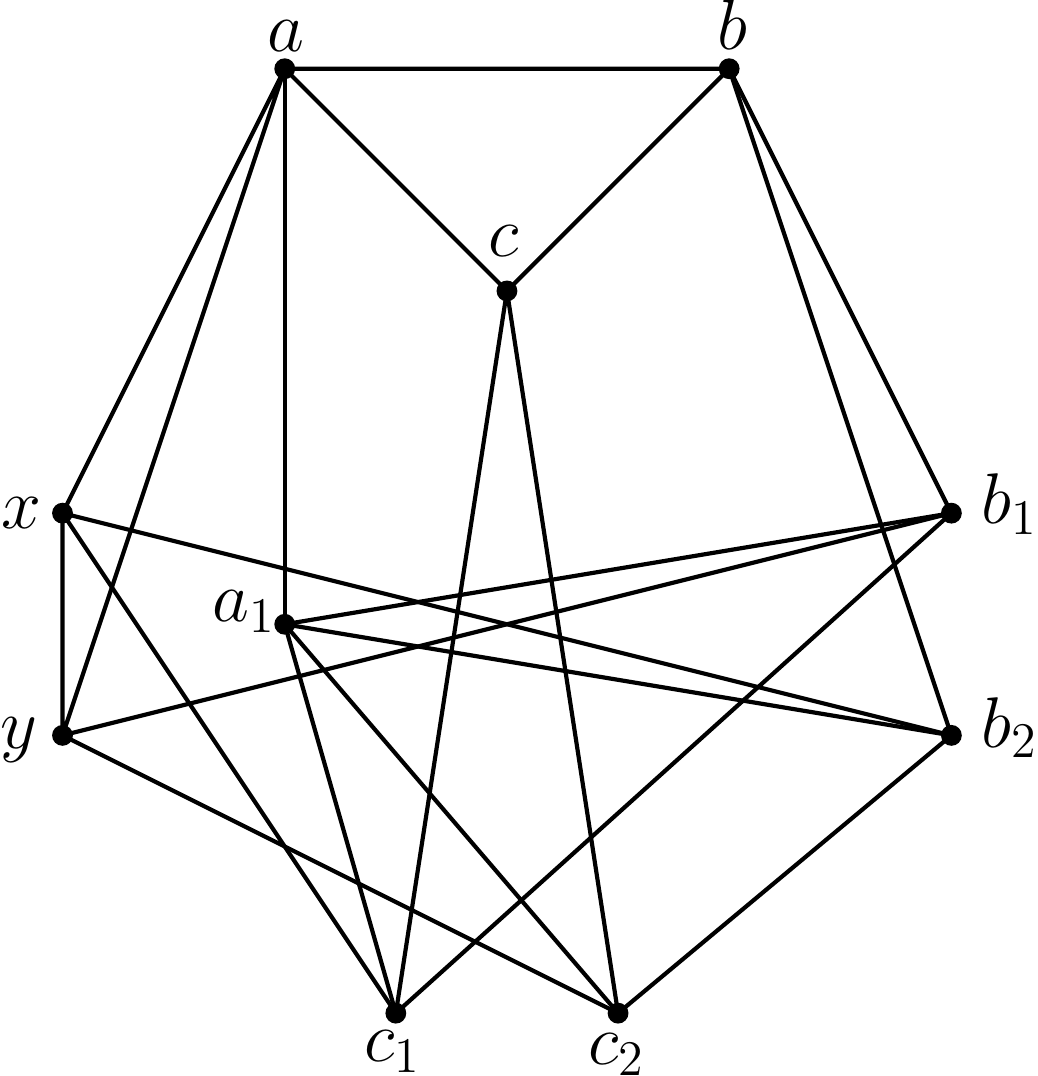}&&
 \includegraphics[scale=0.45]{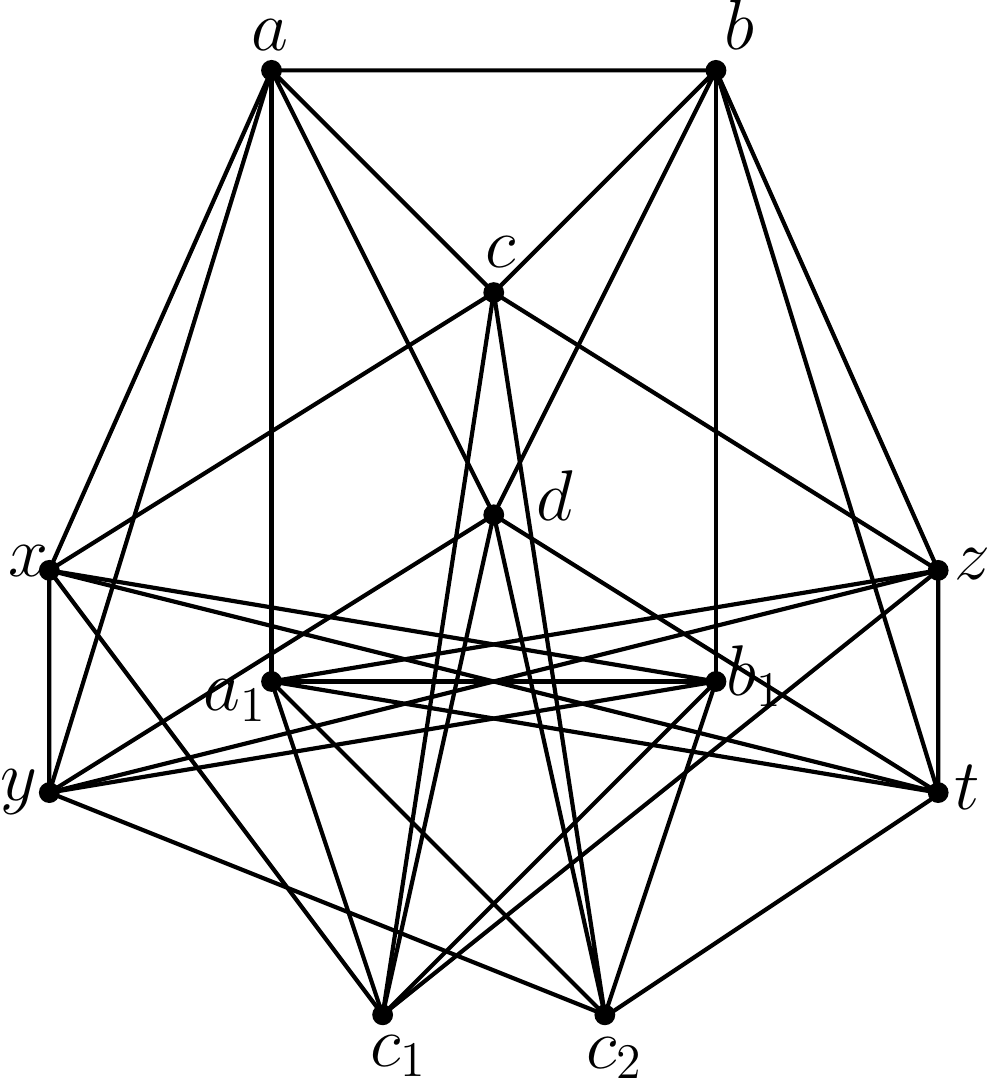}
 \\[6pt]
{$P_{10}$}&&
{$P_{12}$} \\[6pt]
\end{tabular}
\caption{For graphs $Q_9$, $Q_{12}$, $P_{10}$ and $P_{12}$.}\label{fig_fourgraphs}
\end{figure}

\medskip

Now let $B_n$ ($n\geqslant 4$) be the  graph with the edge set $\{x_ix_j| 3\leqslant i+1< j\leqslant n\}$. Using this theorem we can characterize graphs $G$ such that $I(G)^2$ are Buchsbaum.

\medskip

\noindent {\bf Theorem 2 {\rm (Theorem \ref{thmBuchsbaum})}. } {\it  Let   $G$ be a graph. Then  $I(G)^2$ is  Buchsbaum if and only if  $G$ is either a triangle-free graph in $W_2$, or isomorphic to one of $K_n$  ($n\geqslant 3$), $C_n^c$ ($n\geqslant 6$), $B_n$  $(n\geqslant 4)$,  $Q_9, Q_{12}, P_{10}$ or $P_{12}$.
}

\medskip

The paper is organized as follows. In Section 1 we recall some basic notations, and terminologies from Graph theory. In Section $2$ we investigate the local structure of locally triangle-free graphs in $W_2$.  Section $3$ is devoted  to classifying the class of  
locally triangle-free graphs in $W_2$. In the last section we graph-theoretically characterize graphs $G$ for which $I(G)^2$ are Buchbaum.

\section{Preliminaries}

Let $R := K[x_1,\ldots, x_n]$ be the polynomial ring over a field $K$ and $\m := (x_1,\ldots,x_n)R$  the maximal homogeneous ideal of $R$. Let $H^i_{\m}(R/I)$ denote the $i$-th local cohomology module of $R/I$ with respect to $\m$. A residue class ring $R/I$ is called a {\it generalized Cohen-Macaulay} (resp. {\it  Buchsbaum}) ring if $H^i_{\m}(R/I)$   has finite length (resp.
the canonical map $$\ext_R^i(R/\m,S/I) \to H_{\m}^{i}(R/I)$$ is surjective) for all $i<\dim(R/I)$ (see \cite{CST, SV}).

\medskip

First we address the problem of characterizing graphs $G$ such that  $I(G)^2$  are generalized Cohen-Macaulay.

\begin{thm}\label{gCM} Let $G$ be a simple graph. Then, $I(G)^2$ is generalized Cohen-Macaulay if and only if:
\begin{enumerate}
\item $G$ is well-covered;
\item Every nontrivial component of $G_v$ is a triangle-free graph in $W_2$ for any vertex $v$ of $G$.
\end{enumerate}
\end{thm}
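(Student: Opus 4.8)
The plan is to combine the local characterization of the generalized Cohen--Macaulay property with the Cohen--Macaulayness criterion $I(H)^2$ Cohen--Macaulay $\iff$ $H$ is a triangle-free graph in $W_2$ proved in \cite{HT}. Recall that $R/J$ is generalized Cohen--Macaulay if and only if it is equidimensional and $(R/J)_{\mathfrak p}$ is Cohen--Macaulay for every homogeneous prime $\mathfrak p\neq\m$ (see \cite{SV}). Since $\sqrt{I(G)^2}=I(G)$, the rings $R/I(G)^2$ and $R/I(G)$ have the same minimal primes, so $R/I(G)^2$ is equidimensional if and only if $I(G)$ is unmixed, i.e. all minimal vertex covers of $G$ have the same cardinality, i.e. $G$ is well-covered. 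This yields condition $(1)$, and it remains to prove that $(R/I(G)^2)_{\mathfrak p}$ is Cohen--Macaulay for all $\mathfrak p\neq\m$ exactly when condition $(2)$ holds.

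Because $I(G)^2$ is a monomial ideal, the non-Cohen--Macaulay locus of $R/I(G)^2$ is defined by a monomial ideal, and a localization of a Cohen--Macaulay local ring is again Cohen--Macaulay; hence it is enough to test the condition at the maximal proper monomial primes $\mathfrak p_v$, one for each vertex $v$, where $\mathfrak p_v$ is generated by all the variables except $v$. Inverting $x_v$, a direct inspection of the generators shows that $I(G)R_{\mathfrak p_v}=J+I_0$, where $J=(x_u\mid u\in N_G(v))$ and $I_0=I(G_v)$: an edge at $v$ contributes the linear form $x_u$, the edges inside $N_G(v)$ and between $N_G(v)$ and $V(G_v)$ are absorbed by $J$, and the edges inside $V(G_v)$ give $I_0$. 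Setting $K'=K(x_v)$ and $S=K'[x_i\mid x_i\neq v]$, the ring $(R/I(G)^2)_{\mathfrak p_v}$ is the localization of $C:=S/(J+I_0)^2$ at its graded maximal ideal, so it is Cohen--Macaulay if and only if $C$ is.

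The heart of the argument is the structure of $C$. The variables generating $J$ do not occur in $I_0$, and $x_ux_{u'}\in J^2$ and $x_uI_0\subseteq JI_0$ for $u,u'\in N_G(v)$; a short computation with monomials then shows that $C\cong D\oplus E^{\,k}$ as modules over $D:=K'[x_w\mid w\in V(G_v)]/I_0^{\,2}$, where $E:=K'[x_w\mid w\in V(G_v)]/I_0$ and $k=|N_G(v)|\geq1$ (here we use that $G$ has no isolated vertices). Since $C$ is finite over $D$ and $\m_DC$ is primary to the graded maximal ideal of $C$, we get $\dim C=\dim D=\dim E$ and $\operatorname{depth}C=\min\{\operatorname{depth}D,\operatorname{depth}E\}$; therefore $C$ is Cohen--Macaulay if and only if both $I(G_v)$ and $I(G_v)^2$ are Cohen--Macaulay (Cohen--Macaulayness is unaffected by the field extension $K\subseteq K'$). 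Deleting the isolated vertices of $G_v$ only introduces polynomial variables, and the class $W_2$ is stable under disjoint unions and under passing to connected components, so by \cite{HT} the latter holds precisely when every nontrivial component of $G_v$ is a triangle-free graph in $W_2$ --- for the ``only if'' direction one applies the criterion directly, and for the ``if'' direction one additionally needs that a triangle-free graph in $W_2$ has a Cohen--Macaulay edge ideal (which should already be contained in \cite{HT}). Letting $v$ range over $V(G)$ produces condition $(2)$, and the proof is complete.

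I expect the main obstacle to be the isomorphism $C\cong D\oplus E^{\,k}$: it is exactly the cross term $JI_0$ in $(J+I_0)^2$ that introduces the extra summand $E^{\,k}$ and makes Cohen--Macaulayness of the localization equivalent to Cohen--Macaulayness of \emph{both} $I(G_v)$ and its square, rather than of the square alone. A secondary point is to make sure the implication ``$I(H)^2$ Cohen--Macaulay $\Rightarrow$ $I(H)$ Cohen--Macaulay'' for edge ideals is available; if it is not isolated in \cite{HT}, it should be inserted as a short lemma.
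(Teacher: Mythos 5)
Your argument is correct, but note that the paper's own proof of this theorem is a one-line citation: it invokes \cite[Corollaries 2.3 and 3.10]{HMT} together with \cite[Theorem 4.4]{HT}, and those results from \cite{HMT} package precisely the two reductions you perform by hand --- generalized Cohen--Macaulayness of $R/I(G)^2$ is equivalent to equidimensionality (i.e.\ well-coveredness of $G$) plus Cohen--Macaulayness at the monomial primes $\mathfrak p_v$, and the localization at $\mathfrak p_v$ is controlled by $I(G_v)$ and $I(G_v)^2$ exactly through the decomposition $C\cong D\oplus E^{k}$ you describe. So you are in effect reproving the imported input rather than following a genuinely new route; what your write-up buys is transparency, since the reader sees where the extra summand $E^{k}$ (hence the condition on $I(G_v)$ itself, not merely on its square) comes from. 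The two points you flag do close as expected: a triangle-free graph in $W_2$ is Gorenstein by \cite[Theorem 3.4]{HT}, so $I(H)^2$ Cohen--Macaulay already forces $I(H)$ Cohen--Macaulay, and your condition ``both $I(G_v)$ and $I(G_v)^2$ Cohen--Macaulay'' collapses to condition (2) of the theorem. The one step worth writing out in full is the reduction to the maximal proper monomial primes $\mathfrak p_v$: it relies on the non-Cohen--Macaulay locus of the $\mathbb Z^n$-graded ring $R/I(G)^2$ being cut out by a monomial ideal, which you assert but should justify (it is standard, via the $\mathbb Z^n$-grading of the relevant $\ext$ modules, combined with the fact that every monomial prime other than $\m$ lies in some $\mathfrak p_v$).
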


\begin{proof}
Follow from \cite[Corollaries $2.3$ and $3.10$]{HMT} and \cite[Theorem $4.4$]{HT}.
\end{proof}

Next we recall some terminologies  from Graph theory. Let $G$ be a simple graph on the vertex set $V(G)$ and the edge set $E(G)$. An edge $e\in E(G)$ connecting two vertices $u$ and $v$ will also be  written as $uv$ (or $vu$). In this case, we say that $u$ and $v$ are adjacent. For a subset $S$ of $V(G)$, the neighborhood of  $S$ in $G$ is the set  $$N_G(S) := \{v\in V (G)\setminus S\mid uv\in E(G) \text{ for some }u\in S\},$$
and the close neighborhood of $S$ in $G$ is $N_G[S] := S \cup N_G(S)$.  We denote  by $G[S]$  the induced subgraph of $G$ on the vertex set $S$, denote $G\setminus S$ by $G[V\setminus S]$,   and denote $G_S$ by $G\setminus N_G[S]$.  For an edge  $ab$ of $G$, we write $G_{ab}$ stands for $G_{\{a,b\}}$. The number $\deg_G(v):=|N_G(v)|$ is called the {\it degree} of  $v$ in  $G$.

\begin{lem}\label{locally-well-covered} {\rm (\cite[Lemma $1$]{FHN})} If $G$ is a well-covered graph and $S$ is an independent set of $G$, then $G_S$ is well-covered. Moreover, $\alpha(G_S) = \alpha(G) - |S|$.
\end{lem}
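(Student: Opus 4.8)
The statement is quoted here from \cite{FHN}, but it has a short self-contained proof, and my plan is as follows. The crux is to set up a correspondence between the maximal independent sets of $G_S = G\setminus N_G[S]$ and the maximal independent sets of $G$ that contain $S$. Concretely, I would take an arbitrary maximal independent set $M$ of $G_S$ and show that $M\cup S$ is a maximal independent set of $G$. Independence is immediate, since $M$ and $S$ are each independent and no vertex of $M$ is adjacent to a vertex of $S$ (because $M\subseteq V(G)\setminus N_G[S]$). For maximality I would argue by contradiction: if some $w\in V(G)\setminus(M\cup S)$ could be adjoined to $M\cup S$ keeping it independent, then $w\notin S$ and $w$ is non-adjacent to every vertex of $S$, so $w\notin N_G[S]$, i.e. $w\in V(G_S)$; since $w$ is also non-adjacent to every vertex of $M$, this contradicts maximality of $M$ in $G_S$.

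Once this is in place, the rest is a counting argument. Since $G$ is well-covered, every maximal independent set of $G$ has cardinality $\alpha(G)$, so $|M\cup S| = \alpha(G)$. As $M\subseteq V(G)\setminus N_G[S]$ and $S\subseteq N_G[S]$, the sets $M$ and $S$ are disjoint, whence $|M| = \alpha(G)-|S|$. Because $M$ was an arbitrary maximal independent set of $G_S$, this yields at once that $G_S$ is well-covered and that $\alpha(G_S) = \alpha(G)-|S|$. One should also handle the degenerate case $V(G_S)=\emptyset$ separately: then the empty set is the unique maximal independent set of $G_S$, and the claimed equality reads $0 = \alpha(G)-|S|$, which is consistent since $N_G[S]=V(G)$ forces $S$ to be a maximal independent set of $G$.

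I do not expect a genuine obstacle here. The only step requiring a little care is the maximality verification, where it is essential to use the exact definition $G_S = G\setminus N_G[S]$ — a would-be extending vertex $w$ must be shown to actually lie in $V(G_S)$, not merely to be non-adjacent to $S$ — together with the trivial but needed disjointness $M\cap S=\emptyset$ in the cardinality count. An alternative packaging is to induct on $|S|$, reducing to the case of a single vertex $v$ via the identity $G_S=(G_v)_{S\setminus\{v\}}$ (valid because $S$ independent implies $S\setminus\{v\}$ survives in $G_v$ and remains independent there), but the direct argument above makes this detour unnecessary.
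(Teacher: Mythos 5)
The paper does not prove this lemma; it is quoted directly from \cite[Lemma~1]{FHN}, so there is no in-paper argument to compare against. Your proof is correct and complete: the correspondence $M\mapsto M\cup S$ between maximal independent sets of $G_S$ and maximal independent sets of $G$ containing $S$, together with the disjointness $M\cap S=\emptyset$ and the well-coveredness of $G$, is exactly the standard argument, and you have correctly handled both the maximality verification (showing the extending vertex $w$ lies in $V(G_S)$) and the degenerate case $V(G_S)=\emptyset$.
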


\begin{lem} \label{locally-W2} {\rm (\cite[Lemma 7]{Ho})} Let $G$ be  a graph in $W_2$ and $S$ an independent set of $G$. If $|S| < \alpha(G)$, then $G_S$ is in $W_2$. In particular, $G_S$ has no isolated vertices.
\end{lem}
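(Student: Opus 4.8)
The plan is to reduce the claim to Lemma~\ref{locally-well-covered} by observing that the operation $G\mapsto G_S$ commutes with the deletion of a vertex of $G_S$. Recall that, with the definition of $W_2$ used here, to show $G_S\in W_2$ it is enough to check: (i) $G_S$ is well-covered; and (ii) for every vertex $v$ of $G_S$ the graph $(G_S)\setminus v$ is well-covered with $\alpha((G_S)\setminus v)=\alpha(G_S)$.

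Part (i) is immediate: since $G\in W_2$ it is well-covered, so Lemma~\ref{locally-well-covered} yields that $G_S$ is well-covered with $\alpha(G_S)=\alpha(G)-|S|$; the hypothesis $|S|<\alpha(G)$ then gives $\alpha(G_S)\geqslant 1$, so $V(G_S)\neq\emptyset$. For part (ii), the key point is the identity
\[
(G_S)\setminus v=(G\setminus v)_S\qquad\text{for every }v\in V(G_S).
\]
Indeed, $v\in V(G_S)$ means $v\notin N_G[S]$, so deleting $v$ does not change the closed neighborhood of $S$, i.e. $N_{G\setminus v}[S]=N_G[S]$; hence both sides of the identity equal the induced subgraph $G[V(G)\setminus(N_G[S]\cup\{v\})]$. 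Now fix $v\in V(G_S)$. Since $G\in W_2$, the graph $G\setminus v$ is well-covered with $\alpha(G\setminus v)=\alpha(G)$, and $S$ is still an independent set of $G\setminus v$ because $v\notin S$. Applying Lemma~\ref{locally-well-covered} to $G\setminus v$ and $S$ shows that $(G\setminus v)_S$ is well-covered with $\alpha((G\setminus v)_S)=\alpha(G\setminus v)-|S|=\alpha(G)-|S|=\alpha(G_S)$. By the displayed identity this is exactly (ii), so $G_S\in W_2$. Finally, if $G_S$ had an isolated vertex $u$, then $u$ would lie in every maximal independent set of $G_S$, forcing $\alpha((G_S)\setminus u)=\alpha(G_S)-1<\alpha(G_S)$ and contradicting $G_S\in W_2$; hence $G_S$ has no isolated vertices.

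I expect no serious obstacle: the only point requiring care is the verification of the displayed commutation identity, where one must use the hypothesis $v\notin N_G[S]$ in precisely the right place to see that removing $v$ leaves $N_G[S]$ untouched. Everything else is a direct bookkeeping application of Lemma~\ref{locally-well-covered} together with the defining deletion property of $W_2$.
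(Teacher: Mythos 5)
Your argument is correct. Note first that the paper itself gives no proof of this lemma --- it is quoted verbatim from \cite[Lemma 7]{Ho} --- so there is no in-paper argument to compare against; what you have produced is a self-contained derivation from Lemma~\ref{locally-well-covered} alone, which is a perfectly good route. The one point that needed care, the identity $(G_S)\setminus v=(G\setminus v)_S$ for $v\in V(G_S)$, is verified correctly: since $v\notin N_G[S]$, no edge incident to $v$ meets $S$, so $N_{G\setminus v}[S]=N_G[S]$ and both sides equal $G[V(G)\setminus(N_G[S]\cup\{v\})]$. From there the bookkeeping is right: $S$ remains independent in $G\setminus v$ because $v\notin S$, the $W_2$ hypothesis gives $\alpha(G\setminus v)=\alpha(G)$ with $G\setminus v$ well-covered, and two applications of Lemma~\ref{locally-well-covered} yield exactly conditions (i) and (ii) in the paper's definition of $W_2$ (the hypothesis $|S|<\alpha(G)$ being used only to guarantee $\alpha(G_S)\geqslant 1$, hence $V(G_S)\neq\emptyset$). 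The final deduction that a $W_2$ graph has no isolated vertices is also sound: an isolated vertex $u$ lies in every maximal independent set, so its deletion drops the independence number by one. I see no gap.
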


A graph $G$ is called {\it bipartite} if its vertex set can be partitioned into subsets $A$ and $B$ so that every edge has one end in $A$ and one end in $B$;  such a partition is called a {\it bipartition} of the graph $G$ and denoted by $(A, B)$. It is well known that  $G$ is bipartite if and only if $G$ has no odd cycles (see e.g. \cite[Theorem 4.7]{BM}).  

\begin{lem} {\rm (\cite[Lemma 12]{Ho})} \label{bipartite}
 If $G$ is a bipartite graph in $W_2$, then $G$ consists of disjoint edges.
\end{lem}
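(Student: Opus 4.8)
The plan is to pin down the structure of a bipartite $W_2$ graph $G$ with bipartition $(A,B)$ in three steps, the decisive one being the production, for every vertex of $A$, of a \emph{private pendant} neighbour in $B$.

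First I would record the elementary structure. Since $G$ has no isolated vertices, every vertex of $B$ has a neighbour, necessarily in $A$; hence $A$ is a maximal independent set of $G$, and symmetrically so is $B$. As $G\in W_2$ is in particular well-covered, all maximal independent sets have the same size, whence $|A|=|B|=\alpha(G)=:n$.

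Next, and this is the heart of the matter, I would show that every $v\in A$ has a neighbour $w\in B$ with $N_G(w)=\{v\}$. The idea is to exploit the defining property of $W_2$ for the vertex $v$: the graph $G\setminus v$ is well-covered with $\alpha(G\setminus v)=\alpha(G)=n$, and $(A\setminus\{v\},B)$ is a bipartition of it. Now $A\setminus\{v\}$ is an independent set of $G\setminus v$ of size $n-1<n$, so it cannot be a maximal independent set of the well-covered graph $G\setminus v$; hence some vertex $w$ (necessarily in $B$) can be adjoined to it, which means $w$ has no neighbour in $A\setminus\{v\}$. Since the neighbours of $w$ all lie in $A$ and $w$ is not isolated in $G$, this forces $N_G(w)=\{v\}$. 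The symmetric statement for vertices of $B$ holds too, but I expect not to need it.

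Finally I would assemble these pendants. Choosing such a $w=\phi(v)$ for each $v\in A$ yields a map $\phi\colon A\to B$ that is injective, since $\phi(v)$ determines $v$ as its unique neighbour, hence bijective because $|A|=|B|$. Then for every $b\in B$, writing $b=\phi(v)$ gives $N_G(b)=\{v\}$, so every vertex of $B$ has degree one; consequently $E(G)=\{\,v\phi(v)\mid v\in A\,\}$ is a perfect matching between $A$ and $B$, and $G$ is a disjoint union of edges, as claimed. I expect the second step to be the only non-routine point: the trick is to examine the independent set $A\setminus\{v\}$ inside $G\setminus v$ and invoke well-coveredness to rule out its maximality; once that is done the argument needs neither induction nor an appeal to connectedness, though Lemma~\ref{locally-W2} would also permit an induction on $\alpha(G)$ via the subgraphs $G_v$.
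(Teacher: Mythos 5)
Your argument is correct. Note that the paper itself does not prove this lemma---it is quoted from \cite[Lemma 12]{Ho}---so there is no in-text proof to compare against; what you have supplied is a clean, self-contained replacement. Each step checks out: since $G$ has no isolated vertices and is bipartite, both colour classes are maximal independent sets, so well-coveredness gives $|A|=|B|=\alpha(G)$; the $W_2$ condition applied to $G\setminus v$ correctly forces the non-maximal independent set $A\setminus\{v\}$ to extend by some $w\in B$ with $N_G(w)=\{v\}$; and the resulting injection $A\to B$ is a bijection by the cardinality count, so every vertex of $B$ (and hence, by uniqueness of neighbours, every vertex of $A$) has degree one, yielding a perfect matching. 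The only point worth making explicit in a final write-up is the last inference---that no $v\in A$ can have two neighbours $b_1\neq b_2$ in $B$, since each $b_i$ would then have $N_G(b_i)=\{v\}$ and both would have to equal the unique preimage of $v$ under your bijection---but this is immediate and does not constitute a gap.
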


An $(s-1)$-path of $G$ is a  sequence  of its edges $u_1u_2,u_2u_3,\ldots,u_{s-1}u_s$ and will be denoted by $u_1\ldots u_s$.  An  $s$-cycle ($s\geqslant 3$) is a path $u_1\ldots u_su_1$, where $u_1,\ldots,u_s$ are distinct vertices; it will be denoted by $(u_1\ldots u_s)$. A  3-cycle is called a {\it triangle}.  A graph $G$ is called triangle-free if it has no triangles; and $G$ is a locally triangle-free graph if  $G_v$ is triangle-free for every vertex $v$.

\begin{lem}{\rm (\cite[Lemma 10]{Ho})}\label{locally-edge}   Let $G$ be a locally triangle-free graph in $W_2$ and let $ab$ be an edge of $G$. Then, $G_{ab}$ is either empty or well-covered with  $\alpha(G_{ab})  = \alpha(G)-1$.
\end{lem}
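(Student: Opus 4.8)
The plan is to reduce the claim about the edge $ab$ to the already-established facts about independent sets. First I would observe that $G_{ab} = G \setminus N_G[\{a,b\}]$, but $\{a,b\}$ is \emph{not} independent since $ab \in E(G)$, so Lemmas \ref{locally-well-covered} and \ref{locally-W2} do not apply directly. Instead I would peel off the two vertices one at a time along independent singletons. Set $H := G_a = G \setminus N_G[a]$. Since $\{a\}$ is independent and $\alpha(G) \geqslant 2$ (a $W_2$ graph has no isolated vertices, hence contains an edge, and in fact $|S|=1<\alpha(G)$ unless $G$ is a single edge, a trivial case one checks separately), Lemma \ref{locally-W2} gives that $H$ is again in $W_2$ with $\alpha(H) = \alpha(G) - 1$; moreover $H$ is locally triangle-free because any induced subgraph of the form $H_v$ is an induced subgraph of $G_v$ (or of $G$), hence triangle-free. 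Now $b \in N_G(a)$, so $b \notin V(H)$; the vertices of $G_{ab}$ are exactly the vertices of $H$ that are not adjacent in $G$ — equivalently in $H$ — to $b$. The subtlety is that $b$ itself is not a vertex of $H$, so I cannot literally write $G_{ab} = H_b$; rather $G_{ab} = H \setminus N_G(b) = H \setminus N_H(b)$, since no neighbor of $b$ that survives in $H$ can have been a neighbor of $a$.

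The key step is therefore to handle the removal of the set $T := N_H(b) \cap V(H)$ from $H$. Here I would use the hypothesis that $G$ is locally triangle-free: since $H = G_a$ is triangle-free, the set $T = N_G(b) \cap V(G_a)$ is an \emph{independent} set in $H$ — any edge inside $T$ together with $b$ would form a triangle in $G$ containing $a$ in... no: actually $b \notin V(G_a)$, so the right statement is that an edge inside $T$ together with $b$ forms a triangle in $G$, and combined with $a$ — wait, $\{a,b\}$ is an edge and $T \subseteq V(G_a)$ means $T$ is disjoint from $N_G[a]$, so I get a triangle in $G$ disjoint from... Let me instead argue: $a$ is adjacent to $b$, $b$ is adjacent to each vertex of $T$, and if two vertices $t_1,t_2 \in T$ were adjacent then $(b\, t_1\, t_2)$ is a triangle; now consider it inside $G_{?}$. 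Cleaner: $T \subseteq V(G) \setminus N_G[a]$ and $T \cup \{b\}$ would contain a triangle lying in $G_c$ for a suitable vertex $c$, contradicting local triangle-freeness — or most simply, $T \subseteq N_G(b)$ so $T$ induces a subgraph of $G[N_G(b)]$, and $G[N_G(b)] \subseteq$ ... I would pin this down by noting $G_b$ being triangle-free is not quite what is needed; what is needed is that $G_a$ is triangle-free, which forces $T$, as an induced subgraph of $G_a$, together with the external vertex $b$, to have no edges inside $T$, because such an edge plus $b$ is a triangle in $G$ and this triangle avoids $a$, hence sits inside $G_a$ after also noting $b \notin N_G(a)$ is false — so the honest route is: $(b\,t_1\,t_2)$ is a triangle in $G$; remove a vertex adjacent to none of $b,t_1,t_2$ if one exists, i.e. realize this triangle inside some $G_v$. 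The main obstacle is exactly this bookkeeping: correctly certifying that the vertex set being stripped off is independent so that Lemma \ref{locally-W2} applies.

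Once $T$ is known to be independent in $H$, I would apply Lemma \ref{locally-W2} to $H$ and the independent set $T$: if $|T| < \alpha(H)$ then $H_T = H \setminus N_H[T]$ is in $W_2$ with $\alpha(H_T) = \alpha(H) - |T|$. But I actually want $H \setminus T$, not $H \setminus N_H[T]$; since $N_H[T] = T \cup N_H(T)$ and $b \in N_H(T)$ is already absent, I would instead iterate Lemma \ref{locally-W2} one vertex of $T$ at a time, or — more efficiently — reintroduce $b$: note $G_{ab} = (G_a)_b$ once we check $\{a,b\}$... no. The cleanest formulation: apply Lemma \ref{locally-W2} directly to $G$ with the independent set... we cannot, $\{a,b\}$ isn't independent. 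So the final structure of the proof is the two-step peeling $G \rightsquigarrow G_a =: H \rightsquigarrow$ (remove $b$'s neighbors): since $\{b\} \cup (N_G(b)\setminus N_G[a])$ sits over the edge $ab$, I would invoke Lemma \ref{locally-W2} applied to $G$ and, for each $t \in T$, the independent pair $\{a, t\}$ — this \emph{is} independent since $t \notin N_G[a]$ — to conclude inductively. In any case, when $G_{ab} = \emptyset$ there is nothing to prove; otherwise, after the peeling we land in a nonempty $W_2$ graph (hence well-covered with no isolated vertices) of independence number $\alpha(G) - 1$, and Lemma \ref{locally-well-covered} applied to the independent-set reductions tracks the independence number throughout, yielding $\alpha(G_{ab}) = \alpha(G) - 1$ as claimed.
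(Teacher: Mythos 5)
The paper does not prove this lemma; it is quoted from \cite[Lemma 10]{Ho}, so there is no in-paper argument to compare against. Judged on its own, your proposal has a genuine gap at its central step. You reduce to showing that $T := N_G(b)\setminus N_G[a]$ is an independent set, by arguing that an edge $t_1t_2$ inside $T$ would give a triangle $(b\,t_1\,t_2)$ that you could ``realize inside some $G_v$.'' This is exactly the step you flag as the main obstacle, and it cannot be completed: local triangle-freeness does not forbid triangles in $G$, only triangles surviving in some $G_v$, and a triangle $(b\,t_1\,t_2)$ survives in no $G_v$ precisely when every other vertex is adjacent to one of $b,t_1,t_2$ --- which does happen. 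In fact the set $T$ genuinely need not be independent under the hypotheses of the lemma: Lemma \ref{GA} of the paper is devoted to the case $N_G(a)\setminus N_G[b]\notin\Delta(G)$, and the graphs $P_{10}$ and $P_{12}$ are locally triangle-free graphs in $W_2$ in which this set contains an edge. So the claim your argument hinges on is false, not merely unproved.

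There is a second, independent problem with the reduction itself. Lemmas \ref{locally-well-covered} and \ref{locally-W2} remove the \emph{closed neighborhood} $N_H[T]$ of an independent set, whereas you need to delete only $T$ from $H=G_a$ to obtain $G_{ab}$. Your attempted fixes --- peeling off one vertex of $T$ at a time, or applying Lemma \ref{locally-W2} to the pairs $\{a,t\}$ --- run into the same issue: $H_t$ deletes $N_H[t]\supsetneq\{t\}$, and $G_{\{a,t\}}$ has vertex set $V(G)\setminus(N_G[a]\cup N_G[t])$, which is not $V(G_{ab})$ and carries independence number $\alpha(G)-2$ rather than $\alpha(G)-1$. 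So even if $T$ were independent, the proposed inductive scheme does not produce the graph $G_{ab}$, and no argument is given for why $G_{ab}$ itself is well-covered of the stated independence number. A correct proof has to work directly with the non-independent pair $\{a,b\}$ (as in \cite[Lemma 10]{Ho}) rather than routing everything through the independent-set reduction lemmas.
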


Let $G_1=(V_1,E_1)$ and $G_2=(V_2,E_2)$ be two disjoint graphs, i.e. $V_1 \cap V_2=\emptyset$. Then, the join of $G_1$ and $G_2$, denoted by $G_1*G_2$, is the graph with the vertex set $V_1 \cup V_2$ and the  edge set $E_1\cup E_2\cup\{uv\mid u\in V_1, v\in V_2\}$. Note that $G_1$ and $G_2$ are two induced subgraphs of $G_1*G_2$. 

A {\it simplicial complex} $\Delta$ on the vertex set $V$ is a collection of subsets of $V$ closed under taking subsets; that is, if $\sigma \in \Delta$ and $\tau\subseteq \sigma$ then $\tau\in \Delta$. For a graph $G$, let $\Delta(G)$ be the set of independent sets of $G$. Then $\Delta(G)$ is a simplicial complex which is  the so-called {\it independence complex} of $G$. 

The condition that $G$ is a join of its two  proper subgraphs  can be represented via the {\it connectivity} of $\Delta(G)$.

\begin{lem} \label{disconnected} A graph $G$ is a join of   its two proper subgraphs if and only if $\Delta(G)$ is disconnected.
\end{lem}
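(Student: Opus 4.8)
The statement to prove is Lemma \ref{disconnected}: \emph{A graph $G$ is a join of its two proper subgraphs if and only if $\Delta(G)$ is disconnected.}

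\medskip

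The plan is to argue both directions directly from the definitions, translating between ``$G = G_1 * G_2$'' and the combinatorial structure of the faces of $\Delta(G)$. Recall that the vertices of $\Delta(G)$ are exactly the vertices of $G$ (since $G$ has no isolated vertices, each $\{v\}$ is a face), and that two vertices $u,v$ lie in a common face of $\Delta(G)$ precisely when $\{u,v\}$ is independent in $G$, i.e. when $uv \notin E(G)$. Connectivity of $\Delta(G)$ (as a $1$-dimensional property) is governed entirely by its $1$-skeleton, which is the complement graph $G^c$. So the whole lemma is really the assertion: $G$ is a join of two proper subgraphs if and only if $G^c$ is disconnected. I would make this reduction explicit first, as it makes both implications short.

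\medskip

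For the forward direction, suppose $G = G_1 * G_2$ with $V_1 = V(G_1)$ and $V_2 = V(G_2)$ both nonempty. By definition of join, every $u \in V_1$ is adjacent in $G$ to every $v \in V_2$, hence no edge of $G^c$ joins $V_1$ to $V_2$; thus $V_1$ and $V_2$ disconnect $G^c$, so $G^c$ is disconnected, and therefore $\Delta(G)$ — whose $1$-skeleton is $G^c$ — is disconnected. (One should note a face of $\Delta(G)$ cannot contain both a vertex of $V_1$ and a vertex of $V_2$, so no path in $\Delta(G)$ crosses between them.) For the converse, suppose $\Delta(G)$ is disconnected. Then its $1$-skeleton $G^c$ is disconnected, so we may write $V(G) = V_1 \sqcup V_2$ with $V_1, V_2$ nonempty and no edge of $G^c$ between them; equivalently, in $G$ every vertex of $V_1$ is adjacent to every vertex of $V_2$. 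Setting $G_1 = G[V_1]$ and $G_2 = G[V_2]$, we get $E(G) = E(G_1) \cup E(G_2) \cup \{uv \mid u \in V_1, v \in V_2\}$, i.e. $G = G_1 * G_2$, a join of two proper subgraphs.

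\medskip

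I do not expect a genuine obstacle here; this is a bookkeeping lemma. The only point requiring a little care is making sure ``disconnected'' for the simplicial complex $\Delta(G)$ is correctly identified with disconnectedness of its $1$-skeleton $G^c$ — one should remark that a simplicial complex is connected (in the usual sense that any two vertices are joined by a path through edges of the complex) if and only if its $1$-skeleton is a connected graph, and that every vertex $v$ of $G$ does appear in $\Delta(G)$ because $G$ has no isolated vertices so $\{v\}$ is an independent set. With that remark in place, both implications are immediate from unwinding the definition of the join.
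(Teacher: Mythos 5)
Your proof is correct and follows essentially the same route as the paper's: both directions reduce to the observation that $G=G_1*G_2$ means every vertex of $V_1$ is adjacent in $G$ to every vertex of $V_2$, i.e.\ no independent set (hence no face of $\Delta(G)$) meets both parts. Your reformulation via the $1$-skeleton, which is $G^c$, is just a repackaging of the paper's decomposition $\Delta(G)=\Delta(G_1)\cup\Delta(G_2)$ with $\Delta(G_1)\cap\Delta(G_2)=\{\emptyset\}$; one tiny remark: every singleton $\{v\}$ is an independent set whether or not $v$ is isolated, so the appeal to ``no isolated vertices'' to get the vertices of $\Delta(G)$ is unnecessary.
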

\begin{proof} Assume that $G = G_1 * G_2$, where $G_1$ and $G_2$ are two non-empty graphs. Then, $\Delta(G) = \Delta(G_1)\cup \Delta(G_2).$
Since $\Delta(G_1) \cap \Delta(G_2)=\{\emptyset\}$, we have $\Delta(G)$ is disconnected.

Conversely, if $\Delta(G)$ is disconnected, then it can write as a union of two simplicial complexes
$$\Delta(G) = \Delta_1 \cup \Delta_2$$
such that $\Delta_1\cap \Delta_2=\{\emptyset\}$ and $V_i\ne\emptyset$, where $V_i$ is the set of vertices of $\Delta_i$, for $i=1,2$.

Let $G_i := G[V_i]$ for $i=1,2$. We will show that $G = G_1 * G_2$, or equivalently $E(G) = E(G_1 * G_2)$. Indeed, it is obvious that $E(G) \subseteq E(G_1 * G_2)$. We now prove the reverse inclusion. Let $v_1v_2$ be an edge of $G_1*G_2$. If $v_1v_2$ is an edge of either $G_1$ or $G_2$, then $v_1v_2\in E(G)$. Hence, we may assume that $v_i\in V(G_i)$ for $i=1,2$.

Since each $v_i$ is a vertex of $\Delta_i$, we have $\{v_1,v_2\}\notin \Delta(G_1) \cup \Delta(G_2) =\Delta(G)$. In other words, $v_1v_2\in E(G)$, so that $E(G) = E(G_1 * G_2)$, and the lemma follows.
\end{proof}

Locally triangle-free graphs $G$ in $W_2$ with $\alpha(G)\leqslant 2$ have simple structure. Namely,

\begin{prop}\label{small-cases}   Let  $G$ be a locally triangle-free graph  in $W_2$ with $n$ vertices. Then,
\begin{enumerate}
\item If $\alpha(G)=1$, then $G$ is $K_n$ with $n\geqslant 2$;
\item If $\alpha(G) = 2$, then $G$ is $C_n^c$ with $n\geqslant 4$.
\end{enumerate}
\end{prop}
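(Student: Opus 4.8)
Part (1) is immediate: if $\alpha(G)=1$ then no two vertices of $G$ are non-adjacent, so $G$ is a complete graph, and since $G$ has no isolated vertices it is $K_n$ with $n\geqslant 2$.

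For part (2), since $C_n^{c}$ is precisely the graph whose complement is a cycle, the natural move is to pass to the complement $\overline{G}:=G^{c}$ and rewrite the hypotheses there. The condition $\alpha(G)=2$ says exactly that the maximum clique of $\overline{G}$ has two vertices, i.e.\ $\overline{G}$ is triangle-free and has at least one edge. As $G$ is well-covered with $\alpha(G)=2$, every vertex $v$ has a non-neighbour in $G$ (otherwise $\{v\}$ would be a maximal independent set, contradicting that $G$ is well-covered with $\alpha(G)=2$), so $\overline{G}$ has no isolated vertex; and since $G\in W_2$, the graph $G\setminus v$ is still well-covered with independence number $2$, so the same remark applied to $G\setminus v$ shows that $\overline{G}\setminus v$ has no isolated vertex for every $v$. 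Hence every vertex of $\overline{G}$ has degree at least $2$, since a degree-$1$ vertex with unique neighbour $w$ would be isolated in $\overline{G}\setminus w$.

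To bound degrees from above I would use local triangle-freeness. Since $\overline{G}$ is triangle-free, $N_{\overline{G}}(v)$ is an independent set of $\overline{G}$ for every $v$; and because $V(G)\setminus N_{G}[v]=N_{\overline{G}}(v)$, we have $G_v=G[N_{\overline{G}}(v)]$. Any three vertices of $N_{\overline{G}}(v)$, being pairwise non-adjacent in $\overline{G}$, are pairwise adjacent in $G$ and so span a triangle of $G$ lying inside $G_v$; as $G_v$ is triangle-free, $|N_{\overline{G}}(v)|\leqslant 2$. Thus $\overline{G}$ is $2$-regular and triangle-free, hence a disjoint union of cycles, each of length $\geqslant 4$.

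It then remains to prove that $\overline{G}$ is connected, for then $\overline{G}=C_n$ with a single $n\geqslant 4$ and $G=C_n^{c}$. By Lemma~\ref{disconnected} this is the same as excluding a nontrivial join decomposition $G=G_1*G_2$. Here I would exploit that $G_v=(G_i)_v$ for $v\in V(G_i)$ (so each factor is again locally triangle-free), that $G_1*G_2$ is well-covered only when $\alpha(G_1)=\alpha(G_2)$, and that membership in $W_2$ descends to the factors; this cuts a join down to smaller instances of the statement, which one then reconciles with the fact that no $C_n^{c}$ is itself a join. Carrying out this reduction cleanly, together with the direct check of the small-vertex base cases, is the step I expect to require the most care.
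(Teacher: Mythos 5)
Part (1) and the degree computation in part (2) are correct, and your route to $\deg_{\overline{G}}(v)=2$ (lower bound from $W_2$, upper bound from local triangle-freeness) is a slightly different packaging of what the paper does (the paper shows directly that $G_v$ is a single edge, whence $\deg_G(v)=n-3$). The genuine gap is exactly the step you defer: the connectivity of $\overline{G}$. This step cannot be supplied from the stated hypotheses, because part (2) as written is false without an additional non-join assumption. Take $\overline{G}=C_4\sqcup C_4$, i.e.\ $G=C_4^c * C_4^c$ on $8$ vertices, where $C_4^c$ is a pair of disjoint edges. Every maximal independent set of $G$ is a maximal independent set of one of the two factors, hence has size $2$; deleting any vertex leaves one factor as an edge plus an isolated vertex, whose maximal independent sets still all have size $2$; so $G$ is in $W_2$ with $\alpha(G)=2$. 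Moreover $G_v$ is a single edge for every $v$, so $G$ is locally triangle-free. Yet $G\not\cong C_8^c$, since $C_4\sqcup C_4\not\cong C_8$. Your proposed repair --- descend to the join factors, apply the statement to them, and ``reconcile with the fact that no $C_n^c$ is a join'' --- collapses precisely here: the factors do come out as $C_{n_i}^c$'s, but that produces no contradiction; it produces the example above. What is missing is the hypothesis that $G$ is not a join of two proper subgraphs (equivalently, by Lemma~\ref{disconnected}, that $\Delta(G)$ is connected, i.e.\ that $\overline{G}$ is connected), and with that hypothesis added your argument closes immediately.

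You should also know that the paper's own proof has the same jump: after obtaining $\deg_{G^c}(v)=2$ for all $v$ it asserts ``so $G^c$ is an $n$-cycle,'' tacitly assuming $G^c$ is connected rather than a disjoint union of cycles. The downstream uses of Proposition~\ref{small-cases} are not damaged, because in each of them the extra hypothesis is available: in Claim~7 of Lemma~\ref{Intersection} the graph $G_S$ is triangle-free, and a join of two factors each containing an edge always contains a triangle; in Case~2 of Theorem~\ref{thmGorenstein} the graph is Gorenstein, so $\Delta(G)$ is connected. So your instinct that the connectivity reduction is ``the step requiring the most care'' was right, but the honest conclusion is stronger: it is not merely delicate, it is unprovable as stated, and both your proof and the paper's need the non-join (equivalently, connected complement) hypothesis to be made explicit.
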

\begin{proof} If $\alpha(G)=1$, then   $G = K_n$ is a complete graph. Since $G$ is in $W_2$, $n\geqslant 2$.

If $\alpha(G)=2$, for each $v\in V(G)$, $G_v$ is a triangle-free graph in $W_2$ by Lemma \ref{locally-W2} and  $\alpha(G_v)=1$ by Lemma $\ref{locally-well-covered}$. Thus,  $G_v$ is just an edge. It follows that $\deg_G(v)=n-3$ for any $v\in V(G)$. It yields $\deg_{G^c}(v)=2$ for all $v\in V(G^c)$, so $G^c$ is an $n$-cycle. Since $\alpha(G)=2$, we get $n\geqslant 4$, as required.
\end{proof}

\section{The structure of Neighborhoods}

In this section we explore the local structure of  locally triangle-free graphs $G$ in $W_2$ with $\alpha(G)\geqslant 3$. Namely, let $(abc)$ be a triangle in $G$, and let $A:=N_G(a)\setminus N_G[b]$. Then,
\begin{enumerate}
\item Either $A\in \Delta(G)$ or $G[A]$ is one edge and $\alpha(G)-2$ isolated vertices.
\item If $G$ is not a join of its two  proper subgraphs, then $G_{ab}$ is not empty.
\end{enumerate}
The first result is the following.

\begin{lem}\label{GA} Let $G$ be a locally triangle-free graph in $W_2$ with $\alpha(G)\geqslant 3$. Let $(abc)$ be a triangle in $G$ such that $G_{ab}\ne \emptyset$. Let $A:=N_G(a)\setminus N_G[b]$ (see Figure \ref{structureG}). If $A$ is not an  independent set of $G$, then
\begin{enumerate}
\item $G[A]$ consists of one edge and $\alpha(G)-2$ isolated vertices, and
\item $G_{ab}$ has $2(\alpha(G)-2)$ vertices.
\end{enumerate}
\end{lem}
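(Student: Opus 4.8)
The plan is to analyze the local structure around the triangle $(abc)$ and the edge $ab$, using well-coveredness of $G_{ab}$ (Lemma \ref{locally-edge}) together with the locally triangle-free hypothesis applied at the vertex $a$. Write $A = N_G(a)\setminus N_G[b]$ as in the statement, and similarly let $B = N_G(b)\setminus N_G[a]$. Note $c\in N_G(a)\cap N_G(b)$, so $c\notin A\cup B$. The key observation is that $G_a = G\setminus N_G[a]$ is triangle-free by hypothesis, and $A$ is an induced subgraph of... wait, $A\subseteq N_G(a)$, so that is not the right place to look. Instead observe that since $G$ is locally triangle-free, $G_b$ is triangle-free, and $A\subseteq V(G_b)$? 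No: vertices of $A$ are non-neighbors of $b$, hence $A\subseteq V(G)\setminus N_G[b] = V(G_b)\cup\{b\}$... actually $A\subseteq V(G_b)$ since $a\in N_G(b)$ excludes... no, $a$ itself: is $a\in A$? No, $a\notin N_G(a)$. And is $a$ a neighbor of $b$? Yes. So $a\notin V(G_b)$, while every vertex of $A$ is a non-neighbor of $b$ and distinct from $b$, hence lies in $V(G_b)$. Therefore $G[A]$ is an induced subgraph of the triangle-free graph $G_b$, so $G[A]$ is triangle-free. Combined with the assumption that $A$ is not independent, $G[A]$ has at least one edge but no triangle.

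Next I would pin down the size of $A$ and show it has exactly one edge. Here I expect the main obstacle lies: ruling out two or more disjoint edges (or a longer path) inside $G[A]$. The idea is to take an edge $xy$ in $G[A]$ and consider the independent set — or rather use Lemma \ref{locally-edge} applied to the edge $ax$ (note $x\in N_G(a)$, and $(a,x,\cdot)$: is there a triangle on $ax$? not necessarily, but $a,b$... $x\notin N_G[b]$ so $b\notin N_G(x)$). Actually the cleaner route: since $x,y\in A\subseteq N_G(a)$ and $xy\in E(G)$, the set $\{a,x,y\}$ induces a triangle, and $\{a,x,y\}$ is disjoint from... we can then study $N_G(a)\cap N_G(x)$, etc. The real leverage is counting: $\alpha(G_{ab}) = \alpha(G)-1$ by Lemma \ref{locally-edge}, and one extends a maximum independent set of $G[A]$ (which, if $G[A]$ had two disjoint edges, would contain two vertices from those edges plus $\alpha(G)-?$ more) into $G$ to violate either the independence number or well-coveredness. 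I would set up the bound $\alpha(G) \geqslant |A|$-type inequalities carefully: any independent set of $G[A]$ together with $b$ is independent in $G$ (since $A\cap N_G[b]=\emptyset$), so $\alpha(G[A]) + 1 \leqslant \alpha(G)$, i.e. $\alpha(G[A])\leqslant \alpha(G)-1$. To force $G[A]$ to be exactly one edge plus $\alpha(G)-2$ isolated vertices, I would show $|A| = \alpha(G)$ (so that a triangle-free graph on $\alpha(G)$ vertices with independence number exactly $\alpha(G)-1$ must be one edge plus isolated points) — the upper bound $|A|\le\alpha(G)$ should again come from building a large independent set, while the lower bound should come from well-coveredness forcing enough neighbors of $a$ outside $N_G[b]$.

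Finally, for part (2), once $G[A]$ is known to be one edge $xy$ plus $\alpha(G)-2$ isolated vertices, I would count the vertices of $G_{ab}$ directly. Using Lemma \ref{locally-edge}, $G_{ab}$ is well-covered with $\alpha(G_{ab}) = \alpha(G)-1$; since a maximum independent set of $G_{ab}$ together with $\{a,b\}$... hmm, $\{a\}$ is not independent with $G_{ab}$ only if... actually $V(G_{ab})\cap N_G[a]=\emptyset$ and $\cap N_G[b]=\emptyset$, so $\{a\}\cup I$ and $\{b\}\cup I$ are independent for any independent $I\subseteq V(G_{ab})$. The count of $2(\alpha(G)-2)$ should emerge by showing that each of the $\alpha(G)-2$ isolated vertices of $G[A]$, being non-adjacent to $b$ but adjacent to $a$, must — by well-coveredness of $G$ and the $W_2$ property — "account for" two vertices of $G_{ab}$ (one in a natural pairing coming from edges of $G_{ab}$), while the edge $xy$ and vertex $c$ account for the rest. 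I would make this precise by exhibiting an explicit bijection or by a double-counting argument comparing maximal independent sets of $G$ containing $a$ versus those containing $b$, and invoking Lemma \ref{locally-W2} to ensure $G_{ab}$ (which equals $(G_a)_b$ up to the relevant identification) has no isolated vertices when $\alpha(G)\geqslant 3$. I expect the vertex count in (2) to follow fairly mechanically once the structure of $A$ in (1) is established, so the crux of the whole lemma is the combinatorial pigeonholing in part (1).
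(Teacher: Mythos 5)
Your proposal correctly records the easy facts ($A\subseteq V(G_b)$ so $G[A]$ is triangle-free; $\alpha(G[A])\leqslant\alpha(G)-1$ since $A\cap N_G[b]=\emptyset$; $\alpha(G_{ab})=\alpha(G)-1$ by Lemma \ref{locally-edge}), but the crux of part (1) is left unproved, and the one concrete mechanism you propose for it cannot work. Counting independence numbers does not distinguish the target configuration from its competitors: if $G[A]$ were two disjoint edges plus $\alpha(G)-3$ isolated vertices, or a star $K_{1,k}$ plus isolated vertices, it would still satisfy $\alpha(G[A])=\alpha(G)-1$, so the inequality $\alpha(G[A])+1\leqslant\alpha(G)$ rules out nothing. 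Your intended clinching step --- ``a triangle-free graph on $\alpha(G)$ vertices with independence number exactly $\alpha(G)-1$ must be one edge plus isolated points'' --- is false as stated: such a graph is a star plus isolated vertices (all edges through one common vertex), and $K_{1,2}$ is a triangle-free counterexample. Moreover you give no actual route to the bound $|A|\leqslant\alpha(G)$; $A$ sits inside $N_G(a)$ and is not independent, so its size is not controlled by $\alpha(G)$ in any direct way (and indeed $|A|=\alpha(G)+1$ in the two-disjoint-edges scenario you need to exclude).

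The paper's proof shows that the real leverage is not a count but the adjacency structure \emph{between} $G[A]$ and $G_{ab}$, which your sketch never engages with. One first shows every vertex of $G_{ab}$ meets each edge of $G[A]$ in exactly one endpoint (else $G_v$ or $G_b$ contains a triangle), deduces that both $G_{ab}$ and $G[A]$ are bipartite, and then repeatedly applies Lemma \ref{bipartite} to graphs of the form $G_S$ for suitable independent sets $S$ (using Lemma \ref{locally-W2}) to force: the set $S$ of isolated vertices of $G[A]$ is nonempty, each non-isolated component of $G[A]$ is a single edge, each vertex of $G_{ab}$ is adjacent to exactly one vertex of $S$, and the resulting blocks of $G_{ab}$ have size $t+1$ where $t$ is the number of edges of $G[A]$; a final explicit configuration argument kills $t\geqslant 2$. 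Part (2) then falls out as $|V(G_{ab})|=2(\alpha(G)-2)$, whereas in your sketch it is deferred to an unspecified ``bijection or double-counting.'' In short, the proposal identifies where the difficulty lies but does not overcome it, and the counting strategy it leans on is provably insufficient.
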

\begin{proof} In order to prove the lemma we divide into the following claims:

\begin{figure}[H]
\includegraphics[scale=0.6]{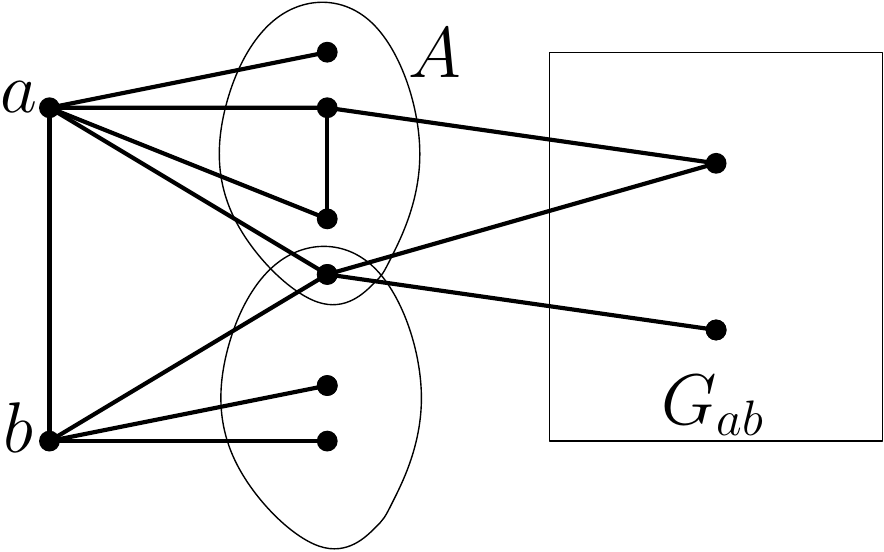}\\
\medskip
\caption{The structure of graph $G$.}
\label{structureG}
\end{figure}


{\it Claim  $1$:  If $xy\in E(G[A])$ and   $v\in V(G_{ab})$, then $v$ is adjacent in $G$ to  exactly one of the two vertices   $x$ and  $y$.}

\medskip

Indeed, if $vx, vy \notin E(G)$, then $G_v$ contains a triangle $(axy)$. If $v$  is adjacent  to both $x$ and $y$, then  $(vxy)$ is the triangle in $G_b$. In both cases $G$ is not locally triangle-free, a contradiction.

\medskip

{\it Claim  $2$:  Assume that  $xy\in E(G[A])$,   $uv\in E(G_{ab})$ and $ux\in E(G)$. Then, $vy\in E(G)$ and $uy,vx\notin E(G)$.}

\medskip

Indeed, by Claim $1$, $ux\in E(G)$ implies  $uy\notin E(G)$.  If $vx\in E(G)$, then $G_b$ has the triangle $(xuv)$, a contradiction. Hence $vx\notin E(G)$, and $vy\in E(G)$ by Claim $1$.

\medskip

{\it Claim  $3$:  $G_{ab}$ is bipartite.}

\medskip

Indeed, assume that $G_{ab}$  has an odd cycle of length $2k+1$, say $(z_1\ldots z_{2k+1})$, for some $k\geqslant 1$. Let $xy\in E(G[A])$.  Since $z_1z_2\in E(G_{ab})$, by Claims $1$ and $2$, we may assume that $z_1x\in E(G)$ and so $z_2y\in E(G)$. Since  $z_2z_3\in E(G_{ab})$, by Claim $2$ we have $z_3x\in E(G)$. Repeating  this argument for  $z_3z_4,\ldots,z_{2k}z_{2k+1},z_{2k+1}z_1$, we finally obtain $yz_1\in E(G)$. Then, $G_b$ has a triangle $(z_1xy)$, a contradiction. Therefore, $G_{ab}$ must be bipartite.

\medskip

{\it Claim  $4$:  $G[A]$ is bipartite.}

\medskip

Indeed, assume  that $G[A]$  has an odd cycle, say $(z_1\ldots z_{2m+1})$, of length $2m+1$ for some $m\geqslant 1$. Let $v\in V(G_{ab})$. By Claim $2$ we may assume that $vz_1\in E(G)$ so that $vz_2\notin E(G)$. Repeating  this argument for   $z_2z_3,\ldots,z_{2m}z_{2m+1},z_{2m+1}z_1$, we finally obtain $vz_1\notin E(G)$,  a contradiction. Hence, $G[A]$ is bipartite, as claimed.

\medskip

Now let $S$  be the set of isolated vertices of $G[A]$ and let  $\Gamma_1,\ldots, \Gamma_t$ be the  connected  components of $G[A\setminus S]$. Note that $t\geqslant 1$ because $A$ is not an independent set of $G$. By Claim $4$, each $\Gamma_i$ is bipartite and let $(A_i, B_i)$ be its bipartition.

\medskip

{\it Claim  $5$:  $S\ne\emptyset$ and every vertex of $G_{ab}$ is adjacent to just one vertex in $S$.}

\medskip

Indeed, since $G_{ab} \ne \emptyset$, let $v\in V(G_{ab})$ and  let $H$ be a connected component of $G_{ab}$, which contains $v$. If $H$ is just one point $v$,  then set  $C=\{v\}$, and $D=\emptyset$. Otherwise, by Claim 3, $H$ is a bipartite and we let $(C, D)$ be a bipartition of $H$, where $v\in C$. Let $x$ be an arbitrary element of $C$. By repeating Claim 1, we can see that for each $\Gamma_i$, $x$ is adjacent to all vertices in $B_i$ but not adjacent to any vertex in $A_i$. If $D\ne \emptyset$, then there is $y\in D$ such that $xy\in E(G)$. By Claim 2, $y$ is adjacent to all vertices in $A_i$ but not adjacent to any vertex in $B_i$. Applying the same argument to all edges between $C$ and $D$, we  conclude that all vertices in $C$ (resp. $D$) have the above properties as $x$ (resp. $y$). Let $X:=\cup_{i=1}^t A_i$. Then $X\cup \{b\}\in \Delta(G)$ and  we can see that $G_{X \cup \{b\}}$ is a bipartite graph with a bipartition $(S, V(G_{ab}\backslash N_G(X))$. By Lemma $\ref{bipartite}$, this graph is a disjoint union of edges. Note that $v\in C\subseteq V(G_{ab}\backslash N_G(X))$, thus
$$|S| = |V(G_{ab}\backslash N_G(X))| \geqslant |C| \geqslant 1,$$
and thus $v$ is is adjacent to just one vertex in $S$, as claimed.

\medskip

By Claim 5, $G_{S\cup \{b\}} = \Gamma_1\cup \cdots\cup \Gamma_t$, so $G_{S\cup \{b\}}$ is a bipartite graph. Thus $\Gamma_i$ is an edge for $i=1,\ldots,t$, by Lemma \ref{bipartite}.  Let   $S = \{p_1,\ldots, p_m\}$ for $m\geqslant 1$; and let $\Gamma_i$ be the edge $a_ib_i$ for $i=1,\ldots,t$. Observe that $\alpha(G_{S\cup \{b\}})= \alpha(G[A\setminus S])=t$, and hence $\alpha(G)=t+m+1$ by Lemma \ref{locally-well-covered}.

Since each vertex of $G_{ab}$ is adjacent to just one vertex in $S$, the set $V(G_{ab})$ can be partitioned into $V(G_{ab}) = V_1\cup \cdots\cup V_m$, where $V_i$ is the set of all vertices of $G_{ab}$ which  are adjacent   to $v_i$.  Moreover, $V_i\ne\emptyset$ because every vertex in $S$ is adjacent to some vertex of $G_{ab}$. Now we show that $V_i\in\Delta(G)$ for all $i$. Indeed, if $G[V_i]$ has an edge, say $xy$, for some $i$, then $(xyv_i)$ would be a triangle in $G_b$, which is impossible as $G_b$ is triangle-free, and then $V_i\in\Delta(G)$.

 \medskip

 {\it Claim  $6$:  $|V_i| = t+1$ for $i=1,\ldots,m$.}

\medskip

Indeed, let $v\in V_i$. we may assume that $va_1,\ldots,va_s \in E(G)$. Let $U := S \setminus\{p_i\} \cup\{b, v\}$. Then, $\alpha(G_U) = \alpha(G) - |U| = (t+m+1) - (m+1) = t$. On the other hand, $G_U$ is a bipartite graph with bipartition $(\{b_1,\ldots, b_t\}, V_i\setminus\{v\})$. By Lemma $\ref{bipartite}$, this graph is just disjoint edges, so $|V_i| = t+1$, as claimed.

 \medskip

In summary, we have proved that $G[A]$ consists of $t$ disjoint edges and $m$ isolated vertices; $|V(G_{ab})| = m(t+1)$ and $\alpha(G) = t+m+1$. Hence, it remains to prove $t=1$.

Assume on the contrary that $t\geqslant 2$. Write $V_1=\{u_1,\ldots,u_{t+1}\}$. By Claim $2$ we may assume that $u_{t+1}a_1,\ldots, u_{t+1}a_t\in E(G)$ and $u_{t+1}b_1,\ldots, u_{t+1}b_t\notin E(G)$. We also can assume that $u_ib_i\in E(G)$ for all $i=1,\ldots,t$; and $u_ib_j\notin E(G)$ for all $1\leqslant i \neq j\leqslant t$. These facts together with Claim $2$ help us conclude  $u_ia_j\in E(G)$ for all $i\ne j$. Now $v_1$ is an isolated vertex of $G_{(S\setminus v_1)\cup \{b,a_1,a_2\}}$,  but this fact contradicts Lemma \ref{locally-W2}. Hence, $t=1$, and the proof of the lemma is complete.
\end{proof}

\begin{lem}\label{empty-set} Let $G$ be a triangle-free graph in $W_2$. Assume that $V(G)$ can be partitioned into $V(G) = S \cup T \cup U$, such that
\begin{enumerate}
\item $|S|+|T|\geqslant 2$;
\item Every vertex in $U$ is adjacent to all vertices in $S\cup T$.
\end{enumerate}
Then, $U = \emptyset$.
\end{lem}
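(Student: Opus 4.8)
The plan is to argue by contradiction. Suppose $U\ne\emptyset$ and fix a vertex $u\in U$. The first step is to show that $S\cup T$ is an independent set of $G$: if $x,y\in S\cup T$ were adjacent, then since $u$ is adjacent to both $x$ and $y$ by hypothesis (2), the vertices $u,x,y$ would form a triangle, contradicting that $G$ is triangle-free.

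The second step is to show that $U$ is also independent. Here I would use hypothesis (1): since $|S|+|T|\geqslant 2$, we may pick some $x\in S\cup T$. If $u_1,u_2\in U$ were adjacent, then both are adjacent to $x$ by (2), so $(u_1u_2x)$ would be a triangle in $G$, again a contradiction.

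Combining the two steps, every edge of $G$ joins a vertex of $S\cup T$ to a vertex of $U$, so $G$ is bipartite with bipartition $(S\cup T, U)$. Since $G$ is a bipartite graph in $W_2$, Lemma \ref{bipartite} forces $G$ to be a disjoint union of edges; in particular every vertex of $G$ has degree $1$. But the chosen vertex $u\in U$ is adjacent to all of $S\cup T$ by (2), so $\deg_G(u)\geqslant |S|+|T|\geqslant 2$, a contradiction. Hence $U=\emptyset$.

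I do not expect a genuine obstacle in this argument; the only points that need a little care are noting that it is $G$ itself (not merely an induced subgraph) that is bipartite and in $W_2$, so that Lemma \ref{bipartite} applies directly, and the (minor) use of hypothesis (1) to ensure $S\cup T\ne\emptyset$ when deducing that $U$ is independent.
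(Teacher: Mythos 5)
Your proof is correct, and it takes a genuinely different route from the paper's. You establish that \emph{both} $S\cup T$ and $U$ are independent sets (each step using triangle-freeness together with hypothesis (2), and hypothesis (1) to guarantee a vertex $x\in S\cup T$ exists when proving $U$ independent), conclude that $G$ itself is bipartite with bipartition $(S\cup T,\,U)$, and then apply Lemma \ref{bipartite} to force $G$ to be a disjoint union of edges, which contradicts $\deg_G(u)\geqslant |S|+|T|\geqslant 2$. The paper only needs the independence of $U$: from it, $N_G(u)=S\cup T$, so $G_u=G[U\setminus\{u\}]$ is totally disconnected; Lemma \ref{locally-W2} then forces $G_u=\emptyset$, hence $\alpha(G)=1$ and $G$ is a single edge, and the contradiction is the vertex count $|V(G)|=|S|+|T|+|U|\geqslant 3$. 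Both arguments draw on the same package of $W_2$ lemmas from Section 1; yours trades the localization step $G\mapsto G_u$ for the extra (easy) check that $S\cup T$ is independent, and lands on a degree contradiction rather than a cardinality one. The two points of care you flag --- that it is $G$ itself which is bipartite and in $W_2$ so Lemma \ref{bipartite} applies directly, and that hypothesis (1) is what makes the independence of $U$ go through --- are exactly the right ones.
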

\begin{proof} Assume on the contrary that $U\ne\emptyset$. As $|S| + |T| \geqslant 2$, we may assume that $S\ne\emptyset$. Since  $G$ is a triangle-free graph, $U\in\Delta(G)$. Let $u\in U$. Then, $G_u = G[U\setminus\{u\}]$, so $G_u$  must be empty by Lemma $\ref{locally-W2}$. It follows that $\alpha(G)=1$, so $G$ is just an edge, and  $|V(G)|=2$. On the other hand, $|V(G)| = |S|+|T|+|U| \geqslant 3$, a contradiction.
\end{proof}

\begin{lem}\label{Intersection} Let $G$ be a locally triangle-free graph in $W_2$ with $\alpha(G)\geqslant 3$ such that $G$ is not a join of two proper subgraphs. Assume that $v_1v_2$ is an edge of $G$ such that $G_{v_1v_2} = \emptyset$. Then, $N_{G}(v_1)\cap N_{G}(v_2)=\emptyset$.
\end{lem}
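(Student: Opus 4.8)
The plan is to argue by contradiction. Suppose there is a vertex $w\in N_G(v_1)\cap N_G(v_2)$, so that $(v_1v_2w)$ is a triangle of $G$. Since $G_{v_1v_2}=\emptyset$ we have $V(G)=N_G[v_1]\cup N_G[v_2]$, and using $v_1v_2\in E(G)$ this lets me write $V(G)$ as a disjoint union $\{v_1,v_2\}\sqcup V_1\sqcup V_2\sqcup I$, where $V_1:=V(G_{v_1})=N_G(v_2)\setminus N_G[v_1]$, $V_2:=V(G_{v_2})=N_G(v_1)\setminus N_G[v_2]$ and $I:=N_G(v_1)\cap N_G(v_2)\ni w$; note our goal is exactly $I=\emptyset$. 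Because $\alpha(G)\geqslant 3$, Lemmas \ref{locally-well-covered} and \ref{locally-W2} together with local triangle-freeness show that $G_{v_1}$ and $G_{v_2}$ are triangle-free graphs in $W_2$ with $\alpha(G_{v_1})=\alpha(G_{v_2})=\alpha(G)-1\geqslant 2$; in particular both are nonempty, have no isolated vertices, every vertex of $V_1$ is adjacent to $v_2$, and every vertex of $V_2$ is adjacent to $v_1$. (The non-join hypothesis is essential here: $K_2*H$ with $H$ a triangle-free graph in $W_2$ with $\alpha(H)\geqslant 3$ is a locally triangle-free graph in $W_2$ possessing a dominating edge lying on a triangle, and its $I$ equals $V(H)\neq\emptyset$.)

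The key step is the covering property: for every edge $xy$ of $G_{v_1}$ one has $V_2\subseteq N_G(x)\cup N_G(y)$, and symmetrically every edge $xy$ of $G_{v_2}$ satisfies $V_1\subseteq N_G(x)\cup N_G(y)$. Indeed $(v_2xy)$ is a triangle of $G$, so since $G$ is locally triangle-free it cannot lie inside $G_z$ for any vertex $z$; but $V(G)=N_G[v_1]\cup N_G[v_2]$ forces any such $z$ into $N_G[v_1]\setminus N_G[v_2]=V_2$, so no vertex of $V_2$ can avoid both $x$ and $y$. A direct consequence is that for each $z\in V_2$ the set $V_1\setminus N_G(z)$ is independent in $G_{v_1}$ (its complement in $V_1$ is a vertex cover of $G_{v_1}$), hence $|V_1\setminus N_G(z)|\leqslant\alpha(G)-1$, and symmetrically for $V_2$; thus $V_1$ and $V_2$ are very tightly interconnected.

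It remains to derive a contradiction from this structure, which I would do by feeding the triangle $(v_1v_2w)$ into Lemma \ref{GA}, taking in turn the edges $v_1w$ and $v_2w$ as the distinguished edge (for $v_1v_2$ we simply have $G_{v_1v_2}=\emptyset$), and using that $G_{v_1w}=G[V_1\setminus N_G(w)]$ and $G_{v_2w}=G[V_2\setminus N_G(w)]$. If both $G_{v_1w}$ and $G_{v_2w}$ are empty then $w$ is adjacent to all of $V_1\cup V_2$, hence to every vertex outside $I$; since $G$ is not a join it has no universal vertex (Lemma \ref{disconnected}), so $|I|\geqslant 2$ and $G_w=G[I\setminus N_G[w]]$ is a nonempty triangle-free graph in $W_2$ with $\alpha(G_w)=\alpha(G)-1$, which one plays against the covering property. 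Otherwise, say $G_{v_1w}\neq\emptyset$; then Lemma \ref{GA} applied to the edge $v_1w$ (third vertex $v_2$) says that $A=N_G(v_1)\setminus N_G[w]$ is either independent or consists of one edge and $\alpha(G)-2$ isolated vertices, with $|V(G_{v_1w})|=2(\alpha(G)-2)$ in the latter case; combining this with the covering property, the independence bounds above, Lemma \ref{empty-set} applied to $G_w$ (whose vertices split according to adjacency to $v_1$ and to $v_2$, the ones coming from $I$ playing the role of $U$), and Lemma \ref{locally-W2} (to forbid isolated vertices in the induced subgraphs $G_S$ that arise) yields the contradiction. The main obstacle is exactly this endgame: the triangles of $G$ other than $(v_1v_2w)$, namely the $(v_2xy)$ with $xy$ an edge of $G_{v_1}$ and the $(v_1xy)$ with $xy$ an edge of $G_{v_2}$, always have their apex adjacent to the opposite base vertex, so they cannot be used to exhibit a forbidden triangle inside $G_{v_1}$ or $G_{v_2}$ directly; the covering property is the substitute, but squeezing a contradiction out of it requires a careful case analysis of the bipartite adjacency among $V_1$, $V_2$ and $I$, constrained simultaneously by the covering property, the $W_2$-hypothesis and the non-join hypothesis.
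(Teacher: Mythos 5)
Your setup is correct as far as it goes: the decomposition $V(G)=\{v_1,v_2\}\cup V_1\cup V_2\cup I$, the identification of $V_1$ and $V_2$ with $V(G_{v_1})$ and $V(G_{v_2})$, and the covering property (for an edge $xy$ of $G_{v_1}$, every $z\in V_2$ is adjacent to $x$ or $y$, since otherwise $(v_2xy)$ would be a triangle of $G_z$) are all true and genuinely useful. But the proof stops exactly where the lemma begins. Everything after ``It remains to derive a contradiction from this structure'' is a plan, not an argument: you list the tools you intend to combine (Lemma~\ref{GA} applied to $v_1w$ and $v_2w$, Lemma~\ref{empty-set}, Lemma~\ref{locally-W2}, the covering property) and then concede that ``squeezing a contradiction out of it requires a careful case analysis'' which you do not carry out. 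That case analysis \emph{is} the lemma: in the paper it occupies roughly two pages (Claims 1--7 of the proof of Lemma~\ref{Intersection} plus a final three-way case split) and is organized around a different pivot. The paper chooses an edge $ab$ \emph{inside} $G_{v_1}$ (it exists because $G_{v_1}\in W_2$ and $\alpha(G_{v_1})\geqslant 2$), so that $v_2\in C:=N_G(a)\cap N_G(b)$ and $v_1\in V(G_{ab})$; this guarantees $G_{ab}\ne\emptyset$, lets Lemma~\ref{GA} bite on the sets $A=N_G(a)\setminus N_G[b]$ and $B=N_G(b)\setminus N_G[a]$, and supports the key structural claims ($C=C_2\in\Delta(G)$, every vertex of $C$ adjacent to all of $V(G_{ab})$, etc.) from which the nonexistence of a common neighbour of $v_1$ and $v_2$ is finally extracted. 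Your plan of applying Lemma~\ref{GA} to the edges $v_1w$, $v_2w$ of the triangle $(v_1v_2w)$ does not obviously substitute for this: when the sets $N_G(v_i)\setminus N_G[w]$ happen to be independent, Lemma~\ref{GA} gives you nothing, and you have not shown how the covering property alone closes that (or any) case. As it stands there is a genuine gap: the core of the argument is missing.

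A secondary point: your parenthetical example intended to show that the non-join hypothesis is essential is wrong. $K_2*H$ with $\alpha(H)\geqslant 3$ is not well-covered (its maximal independent sets have sizes $1$ and $\alpha(H)$), hence not in $W_2$. This does not affect the main line of your argument, but it should be removed or replaced.
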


\begin{proof} Since $\alpha(G_{v_1}) = \alpha(G)-1\geqslant 2$  and $G_{v_1}$ is in $W_2$ by Lemma $\ref{locally-W2}$, there is an edge $ab$ in $G_{v_1}$. Let
$$ A:= N_G(a)\setminus N_G[b], B:= N_G(b)\setminus N_G[a], \text{ and } C:=N_G(a)\cap N_G(b).$$
Note that  $v_2\in C$ and $v_1\in V(G_{ab})$. Figure \ref{fig_Gab} depicts   this situation.

\begin{figure}[H]
\includegraphics[scale=0.6]{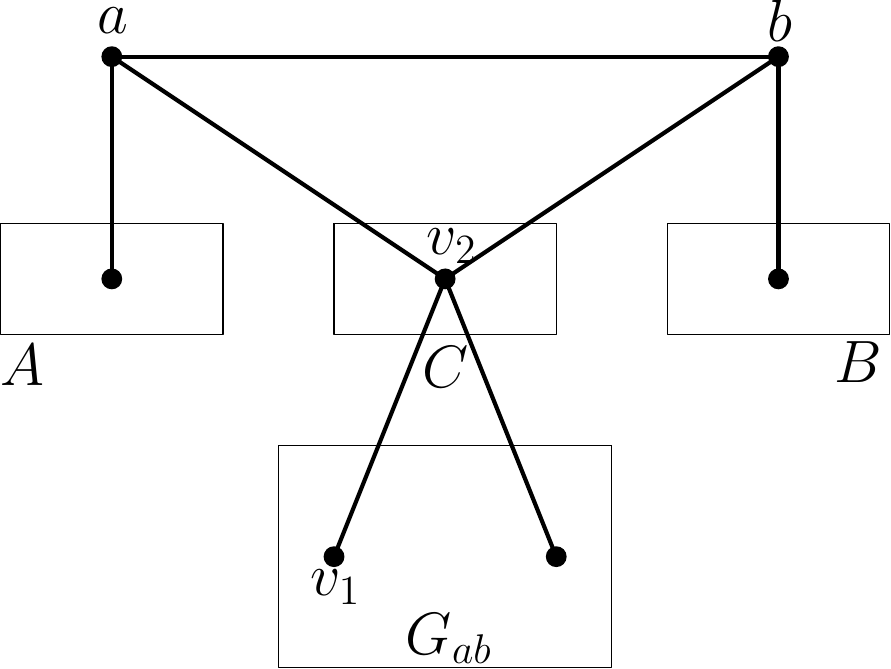}\\
\caption{A configuration for  graph $G$.}
\label{fig_Gab}
\end{figure}


{\it  Claim 1: Every vertex in $C$ is adjacent to all vertices in $V(G_{ab})$.}

\medskip

Indeed, assume on the  contrary that   $cv\notin E(G)$ for some $c\in C$ and $v\in V(G_{ab})$, then $(abc)$ would be a triangle in $G_v$,  a contradiction, as claimed.

\medskip

{\it  Claim 2: $A$ and $B$ are not empty sets.}

\medskip

Indeed, if $A=B=\emptyset$, by Claim $1$ we obtain $G =  G[C] * G[\{a,b\}\cup V(G_{ab})]$, a contradiction. Hence, we are able to assume that $A\ne\emptyset$.

Assume that $B=\emptyset$. Then, every vertex of $G_{ab}$ is adjacent to all vertices in $A$. Because if $uv\notin E(G)$ for some $u\in V(G_{ab})$ and $v\in A$, by Claim $1$ we get $b$ as an isolated vertex of $G_{\{u,v\}}$.  It,  however,  contradicts Lemma $\ref{locally-W2}$. Now if $G_{ab}$ has an edge, say $xy$, then $(xyv)$ would be a triangle in $G_b$ for any $v\in A$, a contradiction. Hence, $G_{ab}$ is a totally disconnected graph.  On the other hand, $G_{ab} = G_a$, so that $G_a$ is also a totally disconnected graph. But then it contradicts Lemma $\ref{locally-W2}$, thus $B\ne \emptyset$, and the claim follows.

\medskip

Let $S$ be the set of isolated vertices of $G[A]$ and let $T$ be the set of isolated vertices of $G[B]$. By Lemma $\ref{GA}$, we see that $G[A]$ (resp. $G[B]$) is either totally disconnected or one edge and $\alpha(G)-2$ isolated vertices, so  $S$ (resp. $T$) is not empty.

\medskip

{\it  Claim 3: If a vertex in $C$ is adjacent to a vertex in $S$ (resp. $T$),  it must be adjacent to all vertices in $A$ (resp. $B$).}

\medskip

Indeed, assume that $cv\in E(G)$ for some $c\in C$ and $v\in S$. If $G_{bc}\ne\emptyset$, then $G_{bc}$ is well-covered and  $\alpha(G_{bc})  = \alpha(G)-1$ by Lemma $\ref{locally-edge}$. Since  $cv\in E(G)$,  $V(G_{bc}) \subseteq A\setminus \{v\}$ and $G_{bc}$ is well-covered, we have
$$\alpha(G_{bc}) \le \alpha(G_{bc}\setminus S)+|S\setminus \{v\}|  \le \alpha(G[A]\setminus S)+|S| -1 = \alpha(G[A]) -1.$$
It follows that $\alpha(G)=\alpha(G_{bc})+1\leqslant \alpha(G[A])$. On the other hand,
$$\alpha(G)\geqslant \alpha(G[A\cup\{b\}]) = \alpha(G[A])+1,$$
a contradiction. Thus, $G_{bc} =\emptyset$, and thus  $c$ is adjacent to every vertex in $A$, as claimed.

\medskip

We now let
\begin{eqnarray*}
 C_1 &:=& \{c\in C \mid c \text{ is adjacent to all vertices in } A\},\\
 C_2&:=&\{c\in C \mid c \text{ is not adjacent to any vertex in }S\}.
 \end{eqnarray*}
By Claim $2$, the set $C$ has a partition $C=C_1\cup C_2$.  We next prove that $C_1 =\emptyset$.

\medskip

{\it Claim 4:  If $C_1\ne \emptyset$,  then every vertex in $C_1$ is adjacent to all vertices in $B$.}

\medskip

Indeed, assume on the contrary that $cv\notin E(G)$ for some $c\in C_1$ and $v\in B$.  If  $B$ is an independent set, i.e. $B=T$, then $c$ is not adjacent to any vertex in $B$ by Claim $3$. Combining with Lemma \ref{locally-edge}, we get  $G_{ac}=G[B]$, and so $|B|=\alpha(G_{ac})=\alpha(G)-1\ge 2$. Thus, $B\cup\{c\}$ is a maximal independent set of $G$, and thus  $C\setminus N_G[c] \subseteq N_G(B)$. By Claim $3$, every vertex in $B$ is adjacent to all vertices in $C\setminus N_G[c]$. Let $z$ be a vertex in $B\setminus \{v\}$. Since $V(G_{\{c,z\}}) = B\backslash \{z\}$,  $v$ is an isolated vertex of $G_{\{c,z\}}$, which contradicts Lemma $\ref{locally-W2}$.

If $B\notin \Delta(G)$, then  by Lemma \ref{GA} we have $G[B]$ is just one edge, say $xy$, and isolated vertices, say $q_1,\ldots,q_m$, where $m=\alpha(G)-2$. Hence, $T=\{q_1,\ldots,q_m\}$ and $m\geqslant 1$.  Note that $cq_i\notin E(G)$ for any $i=1,\ldots, m$ by Claim $3$.

If $cx,cy\in E(G)$,  then $G_{q_1}$ has a triangle $(cxy)$, a contradiction.

If $cx,cy\notin E(G)$, then $V(G_{c})= (C \setminus N_G[c])\cup B$. Since $G_{c}$ has no isolated vertices by Lemma $\ref{locally-W2}$, one has $q_1u\in E(G)$ for some $u\in C\setminus N_G[c]$.  Note that $u$ is adjacent to all vertices in $B$ by Claim $3$. But then $G_{c}$ has a triangle  $(uxy)$, a contradiction.

If  $c$ is adjacent to either $x$ or $y$ but not both, we may assume that  $cx\in E(G)$ and $cy\notin E(G)$. Then, $V(G_{c}) = (C \setminus N_G[c]) \cup   \{y,q_1,\ldots, q_m\}$. Hence, every edge of $G_c$ has an endpoint in $C \setminus N_G[c]$. Together with Claim $3$ it follows that any vertex of $G_c$ that is adjacent to $q_1$   is adjacent to $y$ as well, so $N_{G_c}(q_1)\subseteq N_{G_c}(y)$. But then, $q_1$ is an isolated vertex of $G_{\{c,y\}}$, which contradicts Lemma $\ref{locally-W2}$, and then claim follows.

\medskip

{\it Claim 5:  $C_2 \ne \emptyset$.}

\medskip

Indeed,  assume on the contrary that $C_2=\emptyset$ so that $C_1=C\ne \emptyset$. By Claims $1$ and $4$,  every vertex in $C_1$ is adjacent to all vertices in $\{a,b\}\cup A\cup B\cup V(G_{ab})$.  It follows that  $$G =   G[C_1]*G[\{a,b\}\cup A\cup B\cup V(G_{ab})],$$
a contradiction, so  $C_2\ne \emptyset$, as claimed.

\medskip

{\it Claim 6: If $A\in \Delta(G)$, then $|A| = \alpha(G)-1$.}

\medskip

Indeed, since $C_2\ne \emptyset$ by Claim $5$, we can take $c\in C_2$. Then, $c$ is not adjacent to any vertex in $A$ by Claim $3$, so  that $G_{bc} = G[A]$. Now by applying Lemma $\ref{locally-edge}$ we obtain
$$|A| = \alpha(G[A]) = \alpha(G_{bc}) = \alpha(G)-1,$$
as claimed.

\medskip

{\it Claim 7:  $C_1 = \emptyset$ and $C_2\in\Delta(G)$.}

\medskip

By Lemma $\ref{GA}$ and Claim $6$ we get $|S| \geqslant \alpha(G)-2$, so $\alpha(G_S) = \alpha(G)-|S| \leqslant 2$. Since $G_S$ is a triangle-free graph in $W_2$, it must be an edge, or two disjoint edges or a pentagon by Proposition \ref{small-cases}. Consequently, $\deg_{G_S}(x) \leqslant 2$ for every vertex $x$ of $G_S$. Since $|C_2| \leqslant \deg_{G_S}(b)$, we obtain $|C_2| \leqslant 2$. Together with Claim $5$, this fact yields $|C_2| =1$ or $|C_2| = 2$.

If $|C_2| = 1$, then $C_2 =\{c\}$ for some vertex $c$. We can partition $V(G_{c})$ into  $$V(G_{c}) = (A\setminus N_G(c))\cup (B\setminus N_G(c)) \cup (C_1 \setminus N_G(c)).$$
Since $|A\backslash N_G(c)|\cup |B\backslash N_G(c)|\ge |S|+|T|\ge 2$, together with Lemma $\ref{empty-set}$ and Claim 4, this fact gives $C_1 \setminus N_G(c) =\emptyset$. In other words, $c$ is adjacent to all vertices in $C_1$. Thus, by Claims $1$ and $3$  we conclude that  all vertices in $C_1$ are adjacent to all  vertices in $\{a,b,c\}\cup A\cup B\cup V(G_{ab})$. If $C_1\ne \emptyset$, then
$$G =   G[C_1]*G[ \{a,b,c\}\cup  A \cup B\cup V(G_{ab})],$$
a contradiction. Hence,  $C_1=\emptyset$.

If $|C_2| = 2$, in this case we have $G_S$ is a pentagon and $|S| = \alpha(G)-2$. By Claim $6$, $A$ is not the independent set in $G$. Thus, by Lemma \ref{GA}, $G[A]$ is a disjoint union of one  edge, say $xy$, and isolated vertices in $S$. Let $C_2 =\{c,c'\}$ for some $c,c'\in V(G)$. Since $\{b, c,c', x,y\} \subseteq V(G_S)$, we may assume that $G_S$ is the pentagon with the edge set $\{bc,cx,xy,yc',c'b\}$. In particular, $C_2\in \Delta(G)$. Since we can partition $V(G_{C_2})$ into
$$V(G_{C_2}) =  S \cup (B\setminus N_G(C_2)) \cup (C_1 \setminus N_G(C_2)).$$
By Lemma $\ref{empty-set}$ and Claim 4, we have $C_1 \setminus N_G(C_2)=\emptyset$. It follows that $c'$ is adjacent to all vertices in $C_1\setminus N_G(c)$, and $c$ is adjacent to all vertices in $C_1\setminus N_G(c')$. Together with Lemma $\ref{empty-set}$ and  the following partition of $V(G_c)$,
$$V(G_{c}) = (S\cup \{y\}\cup \{c'\}) \cup (B\setminus N_G(c)) \cup (C_1 \setminus N_G(c)),$$
this fact yields $C_1\setminus N_G(c) =\emptyset$. Similarly, $C_1\setminus N_G(c')=\emptyset$, i.e. every vertex in $C_2$ is adjacent to all vertices in $C_1$. Thus, if $C_1\ne\emptyset$, then we have
$$G =   G[C_1]*G[ \{a,b,c,c'\}\cup (A \cup B\cup V(G_{ab})],$$
a contradiction, and the claim follows.

\medskip

We now return to prove the lemma. Since $C = C_2$ by Claim $7$, we have $v_2\in C_2$. Consequently, $v_2$ is not adjacent to any vertex in $S\cup T$. Since $G_{v_1v_2}=\emptyset$, $v_1$ is adjacent to all vertices in $S\cup T$.

Now assume on the  contrary that $N_G(v_1) \cap N_G(v_2) \ne  \emptyset$. Let $v_3\in N_G(v_1) \cap N_G(v_2)$. Note  that $C_2\in \Delta(G)$ and
$$N_G(v_2) \subseteq \{a,b\} \cup V(G_{ab})\cup (A\setminus S) \cup (B\setminus T),$$
so either $v_3\in V(G_{ab})$ or $v_3\in (A\setminus S) \cup (B\setminus T)$.

Assume that $v_3$ is a vertex of $G_{ab}$. Then, $v_3$ is not adjacent to any vertex in $S\cup T$. Because    assume on the contrary that $v_3p\in E(G)$ for some $p\in S$ (similarly,  $p\in T$). Then, $(pv_1v_3)$ would be a triangle in $G_b$, a contradiction. It follows that $S\cup\{b,v_3\}$ is an independent set in $G$, and so
$|S|\le \alpha(G)-2$. Moreover, by Claim $6$ and Lemma $\ref{GA}$,  $|S|\geqslant \alpha(G)-2$, and so $|S|=\alpha(G)-2$; and $G[A\setminus S]$ is just an edge, say $xy$. Since $S\cup\{b,v_3\}$ is an independent set of $G$, we imply that $v_3$ is adjacent to both $x$ and $y$, and so $(xyv_3)$ is a triangle in $G_b$, a contradiction.

Assume that $v_3\in (A\setminus S) \cup (B\setminus T)$. We may assume that $v_3\in A\setminus S$. In this case $A$ is not an independent set in $G$, so $G[A]$ consists of  one edge, say $xy$, and isolated vertices in $S$   with   $|S|=\alpha(G)-2$. Then either $v_3 = x$ or $v_3 = y$. If $v_2$ is adjacent to both $x$ and $y$, then $G_{bv_2} = G[S]\ne\emptyset$. Therefore, $\alpha(G) =\alpha(G_{bv_2})+1=|S|+1$, a contradiction. We now may assume that $v_2x\notin E(G)$, so that $v_3=y$. On the other hand, since $G_{v_1v_2} =\emptyset$, $v_1x\in E(G)$. Hence, $(v_1xy)$ is a triangle in $G_b$, a contradiction.

Therefore, we must have $N_{G}(v_1)\cap N_{G}(v_2)= \emptyset$, and the proof of the lemma is complete.
\end{proof}

From Lemma \ref{Intersection} we obtain the following result:

\begin{cor}\label{triangle-edge} Let $G$ be a locally triangle-free graph in $W_2$ with $\alpha(G)\geqslant 3$ such that $G$ is not a join of two proper subgraphs. Then, for any edge $ab$ lying in a triangle in $G$ we have $\alpha(G_{ab}) =\alpha(G)-1$. In particular, $G_{ab}\ne \emptyset$.
\end{cor}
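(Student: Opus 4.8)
The plan is to deduce the statement directly from Lemma~\ref{locally-edge} and Lemma~\ref{Intersection}, which between them carry all the weight. I would fix an edge $ab$ lying in a triangle $(abc)$ of $G$ and first invoke Lemma~\ref{locally-edge}: since $G$ is locally triangle-free and in $W_2$, the graph $G_{ab}$ is either empty or well-covered with $\alpha(G_{ab})=\alpha(G)-1$. So the whole corollary reduces to showing $G_{ab}\ne\emptyset$, from which both the equality and the ``in particular'' clause follow immediately.

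To see that $G_{ab}\ne\emptyset$, I would argue by contradiction: suppose $G_{ab}=\emptyset$. Then $ab$ is an edge of $G$ with $G_{ab}=\emptyset$, and Lemma~\ref{Intersection} --- whose hypotheses ($\alpha(G)\geqslant 3$ and $G$ not a join of two proper subgraphs) match the standing assumptions here verbatim --- applied with $v_1=a$ and $v_2=b$ yields $N_G(a)\cap N_G(b)=\emptyset$. But $(abc)$ is a triangle, so $c$ is adjacent to both $a$ and $b$; that is, $c\in N_G(a)\cap N_G(b)$, a contradiction. Hence $G_{ab}\ne\emptyset$, and Lemma~\ref{locally-edge} then gives $\alpha(G_{ab})=\alpha(G)-1$, completing the argument.

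I do not expect any genuine obstacle at this point: the corollary is essentially a one-line consequence of Lemma~\ref{Intersection}, whose own proof --- the lengthy case analysis on the neighborhoods $A$, $B$, $C$ of the triangle and the partition $C=C_1\cup C_2$ --- is where the real difficulty has already been absorbed. The only step requiring a moment's care is confirming that the hypotheses in force in this section are exactly those needed to apply Lemmas~\ref{locally-edge} and \ref{Intersection}, so that both may be used without any additional bookkeeping.
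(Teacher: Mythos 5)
Your argument is correct and is essentially identical to the paper's own proof: both deduce $G_{ab}\neq\emptyset$ from Lemma~\ref{Intersection} via the contrapositive (since $c\in N_G(a)\cap N_G(b)\neq\emptyset$) and then conclude $\alpha(G_{ab})=\alpha(G)-1$ from Lemma~\ref{locally-edge}. No issues.
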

\begin{proof} Assume $ab$ is in the triangle $(abc)$ for some vertex $c$ of $G$, so that $c\in N_G(a)\cap N_G(b)$. In particular,
$N_G(a)\cap N_G(b) \ne\emptyset$, so $G_{ab}\ne\emptyset$ by Lemma $\ref{Intersection}$. The lemma now follows from Lemma $\ref{locally-edge}$.
\end{proof}

\section{Locally Triangle-free graphs in $W_2$}

In this section we characterize locally triangle-free graphs in $W_2$. First we deal with such graphs that are not triangle-free. Thus, we assume that $G$ is a locally triangle-free graph in $W_2$ it satisfies:
\begin{enumerate}
\item $\alpha(G)\geqslant 3$;
\item $G$ is not a join of its two proper subgraphs;
\item $G$ has a triangle $(abc)$.
\end{enumerate}
Let  $$A := N_G(a)\setminus N_G[b], B := N_G(b)\setminus N_G[a], \text{ and } I := N_G(a) \cap N_G(b).$$
Then, by Corollary $\ref{triangle-edge}$, we have $G_{ab}, G_{bc}$ and $G_{ca}$ are not empty, and
$$\alpha(G_{ab}) = \alpha(G_{bc}) =\alpha(G_{ca}) =\alpha(G)-1.$$

We will classify $G$ via the structure of $G[A], G[B]$ and $G_{ab}$.

\begin{lem}\label{singleton}
\begin{enumerate}
\item Every vertex in $I$ is adjacent to all vertices of $G_{ab}$.
\item Every isolated vertex of $G[A]$ is not adjacent to any vertex in $I$.
\item If $A$ is an independent set of $G$ then $I=\{c\}$.
\end{enumerate}
\end{lem}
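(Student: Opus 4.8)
The plan is to prove the three parts in order, each relying on the previous ones, and to use throughout the partition $V(G)=\{a,b\}\sqcup A\sqcup B\sqcup I\sqcup V(G_{ab})$ coming from $N_G(a)=\{b\}\sqcup A\sqcup I$ and $N_G(b)=\{a\}\sqcup B\sqcup I$; in particular $V(G)\setminus N_G[b]=A\cup V(G_{ab})$. Part (1) is the usual local‑triangle‑freeness argument: given $w\in I$ and $p\in V(G_{ab})$, if $wp\notin E(G)$ then $a,b,w\notin N_G[p]$, so the triangle $(abw)$ (recall $w\in N_G(a)\cap N_G(b)$) survives in $G_p$, contradicting that $G$ is locally triangle-free; hence $w$ is adjacent to every vertex of $G_{ab}$. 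Consequently, for every $w\in I$ we have $V(G_{ab})\subseteq N_G(w)$, a fact used repeatedly below.

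For part (2), let $x$ be an isolated vertex of $G[A]$ and suppose for contradiction that $xw\in E(G)$ for some $w\in I$. Then $bw$ is an edge lying in the triangle $(abw)$, so Corollary~\ref{triangle-edge} gives $\alpha(G_{bw})=\alpha(G)-1$. Using the partition and part (1), $V(G_{bw})=(A\cup V(G_{ab}))\setminus N_G[w]=A\setminus N_G[w]$, hence $\alpha\bigl(G[A\setminus N_G[w]]\bigr)=\alpha(G)-1$ and so $|A\setminus N_G[w]|\ge\alpha(G)-1$. Since $x\in A\cap N_G[w]$, this forces $|A|\ge\alpha(G)$. If $A$ were an independent set, $A\cup\{b\}$ would be an independent set of size exceeding $\alpha(G)$ — impossible; so $A$ is not independent, and Lemma~\ref{GA} says $G[A]$ consists of one edge $x_0y_0$ and $\alpha(G)-2$ isolated vertices, whence $|A|=\alpha(G)$. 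Now $|A\setminus N_G[w]|\ge\alpha(G)-1=|A|-1$ together with $x\in A\cap N_G[w]$ forces $A\cap N_G[w]=\{x\}$; but $x$ is isolated in $G[A]$ so $x\notin\{x_0,y_0\}$, hence $G[A\setminus N_G[w]]=G[A\setminus\{x\}]$ still contains the edge $x_0y_0$ and so has independence number at most $\alpha(G)-2$ — a contradiction.

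For part (3), assume $A$ is independent. By part (2) no vertex of $A$ is adjacent to a vertex of $I$, so (again from the partition) $V(G_{bc})=(A\cup V(G_{ab}))\setminus N_G[c]=A$; as $G[A]$ is edgeless, Corollary~\ref{triangle-edge} gives $|A|=\alpha(G_{bc})=\alpha(G)-1$. Since $A\cup\{b\}$ is independent, $|A|<\alpha(G)$, so Lemma~\ref{locally-W2} applies with $S=A$: $G_A\in W_2$ and $\alpha(G_A)=\alpha(G)-|A|=1$. Moreover $G_A$ is an induced subgraph of $G_{a_1}$ for any $a_1\in A$ (and $A\ne\emptyset$ since $|A|=\alpha(G)-1\ge2$), hence triangle-free; by Proposition~\ref{small-cases}(1) it is $K_n$ with $n\ge2$, and triangle-freeness forces $n=2$, so $G_A$ has exactly two vertices and they form an edge. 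Finally, $b\in V(G_A)$ because $b\notin A$ and, by the definition of $A$, $b$ is adjacent to no vertex of $A$; and every $w\in I$ lies in $V(G_A)$ because $w\notin A$ and, by part (2), $w$ is adjacent to no vertex of $A$. Since $b\notin I$ and $|V(G_A)|=2$, we conclude $|I|\le1$, i.e. $I=\{c\}$. The step I expect to be the crux is part (2): the trick is to apply Corollary~\ref{triangle-edge} to the edge $bw$ rather than to $ab$, since part (1) then identifies $V(G_{bw})$ exactly as $A\setminus N_G[w]$ and the claim becomes a short counting argument against the rigid shape of $G[A]$ supplied by Lemma~\ref{GA}; parts (1) and (3) are comparatively routine.
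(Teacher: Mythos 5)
Your proof is correct and follows essentially the same route as the paper's: part (1) is the identical local‑triangle‑freeness observation, part (2) hinges on the same key move of applying Corollary \ref{triangle-edge} to the triangle edge $bw$ and using part (1) to locate $V(G_{bw})$ inside $A\setminus N_G[w]$, and part (3) identifies $G_A$ as a single edge containing $\{b\}\cup I$. The only divergence is cosmetic: in part (2) the paper closes with the short inequality chain $\alpha(G)-1=\alpha(G_{bw})\leqslant\alpha(G[A\setminus\{x\}])=\alpha(G[A])-1\leqslant\alpha(G)-2$, whereas you reach the same contradiction by a longer counting argument that routes through Lemma \ref{GA}; both are valid.
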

\begin{proof} $(1)$ If there are $x\in I$ and $y\in V(G_{ab})$ such that $x$ is not adjacent to $y$. Then, $G_y$ has a triangle $(abx)$, a contradiction.

$(2)$ If there are an isolated vertex of $G[A]$, say $v$, and a vertex in $I$, say $u$, such that $uv\in E(G)$, then by statement $(1)$ we would have $G_{bu}$ is an induced subgraph of $G[A\setminus\{v\}]$. In this case, $\alpha(G_{bu}) \leqslant \alpha(G[A\setminus\{v\}]) =\alpha(G[A])-1$. Since $(abu)$ is a triangle in $G$, by Corollary \ref{triangle-edge}, we get $\alpha(G_{bu}) =\alpha(G)-1$. Thus, $\alpha(G) =\alpha(G_{bu})+1 \leqslant \alpha(G[A])$. On the other hand, $\alpha(G[A]) =\alpha(G[A\cup\{b\}]) -1 \leqslant  \alpha(G)-1$, a contradiction.

$(3)$ By Statements $(1)$ and $(2)$ we get $G_{bc} = G[A]$. In particular, $|A| = \alpha(G)-1$. It follows that $\alpha(G_A) = 1$, so it is an edge. By Statement $(2)$ we have $G[I\cup\{b\}]$ is an induced subgraph of $G_A$, so $G_A$ is just the edge $bc$ and so $I = \{c\}$.
\end{proof}

\begin{lem}\label{NB} If $A\notin \Delta(G)$, then:
\begin{enumerate}
\item $\alpha(G) =3$;
\item $G[A]$ is just one edge and one isolated vertex;
\item $G_{ab}$ is just two isolated vertices.
\end{enumerate}
\end{lem}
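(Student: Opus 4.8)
The plan is to reduce (2) and (3) to (1), then prove $\alpha(G)=3$ by contradiction, first cutting down to the case $\alpha(G)=4$ and then eliminating that case. Since $ab$ lies in the triangle $(abc)$, Corollary~\ref{triangle-edge} gives $G_{ab}\ne\emptyset$ and $\alpha(G_{ab})=\alpha(G)-1$; as $A\notin\Delta(G)$, Lemma~\ref{GA} then shows that $G[A]$ is an edge $xy$ together with $\alpha(G)-2$ isolated vertices and that $|V(G_{ab})|=2(\alpha(G)-2)$. So once $\alpha(G)=3$ is known, (2) is immediate, and (3) follows because then $|V(G_{ab})|=2$ while $\alpha(G_{ab})=\alpha(G)-1=2$, forcing $G_{ab}$ to be edgeless; hence it suffices to prove $\alpha(G)=3$.

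Suppose instead that $\alpha(G)\ge 4$ and put $m:=\alpha(G)-2\ge 2$, writing $G[A]=xy\cup\{p_1,\dots,p_m\}$ with the $p_i$ isolated in $G[A]$. I would first recover, from the proof of Lemma~\ref{GA}, that $V(G_{ab})$ is a disjoint union of independent pairs $V_1,\dots,V_m$, where $V_i$ consists of the vertices of $G_{ab}$ adjacent to $p_i$ (and to no other $p_j$), and that every vertex of $G_{ab}$ is adjacent to exactly one of $x,y$. The first step is to show $m=2$: if $m\ge 3$ and $G_{ab}$ had an edge $uw$, then $u\in V_i$ and $w\in V_j$ with $i\ne j$, both adjacent to $c\in I$ by Lemma~\ref{singleton}(1), so $(uwc)$ is a triangle; choosing $k\notin\{i,j\}$, the vertex $p_k$ is adjacent to none of $u,w,c$ (by the partition and Lemma~\ref{singleton}(2)), so $(uwc)$ would be a triangle in $G_{p_k}$, contradicting local triangle-freeness. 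Hence $G_{ab}$ is edgeless, so $2m=\alpha(G_{ab})=m+1$ and $m=1$, a contradiction; thus $m=2$ and $\alpha(G)=4$.

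In this case $A=\{x,y,p_1,p_2\}$, $V(G_{ab})=V_1\cup V_2$ with $|V_i|=2$ and $\alpha(G_{ab})=3$. Since $G_{ab}$ is well-covered (Lemma~\ref{locally-edge}) and lies inside $K_{2,2}$ with parts $V_1,V_2$, it must be a single edge plus two isolated vertices; writing the edge as $u_1u_2$ with $u_i\in V_i$ I would arrange $u_1x,u_2y\in E$ and $u_1y,u_2x\notin E$ (as for Claim~2 in the proof of Lemma~\ref{GA}), and put $V_i=\{u_i,v_i\}$ with $v_1,v_2$ isolated in $G_{ab}$. Then: looking at $G_{v_1}$, if $v_1x\in E$ the set $\{b,y,p_2,u_1\}$ is an independent $4$-set inside $G_{v_1}$, contradicting $\alpha(G_{v_1})=\alpha(G)-1=3$ (Lemma~\ref{locally-well-covered}), so $v_1y\in E$, and symmetrically $v_2x\in E$; looking at the triangle-free graph $G_{p_1}$, the neighbourhood of $v_2$ there contains $\{p_2,x\}\cup I$, which must be independent, so $I$ is an independent set and $x$ has no neighbour in $I$ (and $y$ has none, by the symmetric argument with $G_{p_2}$); and $G_{\{p_1,p_2\}}$ is a triangle-free graph in $W_2$ with $\alpha=2$, hence $2K_2$ or $C_5$ by Proposition~\ref{small-cases} — the $C_5$ case being impossible because its two vertices other than $b,x,y$ would then lie in $B$, leaving no room for $c\in I$, which nevertheless lies in $G_{\{p_1,p_2\}}$ — so it is $2K_2$, which forces $I=\{c\}$ and that every vertex of $B$ is adjacent to $p_1$ or $p_2$. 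Combining this with the structure already obtained and with $\alpha(G[B])=\alpha(G)-1=3$ (valid since $V(G_{ca})\subseteq B$ by Lemma~\ref{singleton}(1)), I would then split on the two possible shapes of $G[B]$ — independent of size $3$, or an edge plus two isolated vertices — and their attachment patterns to $p_1,p_2$ and to $V(G_{ab})$, and in each case exhibit a triangle in some $G_v$, an isolated vertex in some $G_S$ (contradicting Lemma~\ref{locally-W2}), or a join decomposition of $G$ (contradicting the standing hypothesis via Lemma~\ref{disconnected}).

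The hard part will be exactly this last step: the detailed case analysis of $B$ and how it meets $c$, $\{x,y\}$ and $V(G_{ab})$ when $\alpha(G)=4$, since each attachment pattern has to be refuted separately. Everything preceding it is a fairly direct application of Lemmas~\ref{GA}, \ref{singleton}, \ref{locally-edge} and \ref{locally-W2} together with Proposition~\ref{small-cases}.
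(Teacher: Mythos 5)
Your reduction of (2) and (3) to (1) is correct, and your first step is sound: the observation that for $m=\alpha(G)-2\geqslant 3$ any edge $uw$ of $G_{ab}$ together with $c$ forms a triangle surviving in $G_{p_k}$ for a third isolated vertex $p_k$ of $G[A]$ is a clean, self-contained way to force $G_{ab}$ edgeless and hence $2m=m+1$, a contradiction. This is genuinely different from the paper, which instead applies Lemma~\ref{GA} a second time to $C:=N_G(c)\setminus N_G[a]$ (using that $G_{ab}$ embeds into $G[C]$ by Lemma~\ref{singleton}) to conclude that $G_{ab}$ has at most one edge, and then counts $\alpha(G_{ab})=2m-1=m+1$. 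Both routes land on the same residual case $\alpha(G)=4$ with $G_{ab}$ equal to one edge plus two isolated vertices, and the structural facts you then extract ($v_1y,v_2x\in E(G)$, $I$ independent with no neighbours among $x,y$, $I=\{c\}$, $B\subseteq N_G(p_1)\cup N_G(p_2)$, $\alpha(G[B])=3$) all check out.

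The genuine gap is that you stop exactly where the real work begins. Eliminating $\alpha(G)=4$ is the substance of this lemma, and your proposal only promises to ``split on the two possible shapes of $G[B]$ \dots and in each case exhibit a triangle in some $G_v$, an isolated vertex in some $G_S$, or a join decomposition'' --- without exhibiting any of them. This is not a routine verification: the paper devotes roughly a page to it. Its route is to show first that $G_{ac}$ cannot be totally disconnected (else $\alpha(G)=3$), hence $B\notin\Delta(G)$ and $G[B]=G_{ac}$ is again one edge plus two isolated vertices; then, with $C:=N_G(c)\setminus N_G[a]$ and $G[C]=G_{ab}$, it pins down the complete edge sets of $G_b=G[A\cup C]$ and $G_a=G[B\cup C]$ by repeatedly applying Lemma~\ref{bipartite} and Lemma~\ref{locally-W2} to small sets $G_{\{b,v,c_1\}}$, etc., and finally derives a contradiction from how $G_c=G[A\cup B]$ must attach ($z$ adjacent to $a_1$ gives a triangle in $G_{c_2}$, $z$ adjacent to $a_2$ gives one in $G_{a_1}$). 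Until you carry out an analysis of comparable completeness --- in particular determining the adjacencies between $B$ and each of $x,y,u_1,u_2,v_1,v_2$ rather than just between $B$ and $\{p_1,p_2\}$ --- the proof of part (1), and hence of the whole lemma, is incomplete.
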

\begin{proof} Let $m = \alpha(G)-2$. By Lemma $\ref{GA}$ we have $G[A]$ consists of one edge and $m$ isolated vertices, and $G_{ab}$ has $2m$ vertices.

If $G_{ab}$ is a totally disconnected graph, then $\alpha(G_{ab}) = 2m$ so that $\alpha(G) = 2m+1$. Together with $\alpha(G)=m+2$, this equality yields $m=1$; and the lemma follows.

Assume that $G_{ab}$ is not a totally disconnected graph. We will prove that this assumption leads to a contradiction. Let $C := N_G(c) \setminus N_G[a]$.  Observe that $c$ is adjacent to every vertex in $G_{ab}$ by Lemma $\ref{singleton}$. Thus, $G_{ab}$ is an induced subgraph of $G[C]$, and thus $C$ is not an independent set of $G$. By Lemma $\ref{GA}$ where we replace $a$ and $b$ by $c$ and $a$ respectively, $G[C]$ consists of one edge and $m$ isolated vertices. Consequently, $G_{ab}$ is just one edge and $2m-2$ isolated vertices. It follows that $\alpha(G_{ab}) = 2m-1$. From $\alpha(G) = \alpha(G_{ab})+1$ and $\alpha(G)=m+2$, we obtain $m =2$. Hence, $\alpha(G)=4$ and $G[C] = G_{ab}$.

Recall that $C\notin \Delta(G)$. If $G_{ac}$ is a totally-disconnected graph, then $\alpha(G)=3$ as above. Therefore, $G_{ac}$ is not a totally disconnected graph. By Lemma $\ref{singleton}$ we deduce that $G_{ac}$ is also an induced subgraph of $G[B]$.

Since $V(G_{ac}) \subseteq B$ by Lemma $\ref{singleton}$, we have $G_{ac}$ is an induced subgraph of $G[B]$, so $B\notin \Delta(G)$. By the argument above we get $G[B] = G_{ac}$ consists of one edge and $2$ isolated vertices. By symmetry, we also have $G[A] = G_{bc}$.

Assume that $G[A]$ is the edge $xy$ and two isolated vertices $a_1$ and $a_2$; $G[B]$ is the edge $zt$ and two isolated vertices $b_1$ and $b_2$; and $G[C]$ is one edge $uv$ and two isolated vertices $c_1$ and $c_2$.

Note that $G_b=G[A\cup C]$. We now explore the structure of this graph.  Firstly, we have all vertices in $C$ are adjacent to exactly one of two vertices $x$ and $y$. Indeed, if $wx,wy\in E(G)$ for some $w\in C$, then $G_b$ has a triangle $(wxy)$, a contradiction. If $wx,wy\notin E(G)$, then $(axy)$ is a triangle in  $G_w$, a contradiction. Thus,  we may assume that $xv, yu, c_1x\in E(G)$ and $xu,yv,c_1y\notin E(G)$.  Since $\{b,v,c_1\}\in \Delta(G)$ and $\alpha(G_{\{b,v,c_1\}})=1$, $G_{\{b,v,c_1\}}$ is just an edge; and this edge must be $yc_2$, and so  $c_2x\notin E(G)$.  Similarly,  since  $c_1\in V(G_{\{b,v,c_2\}}) \subsetneq \{c_1,a_1,a_2\}$, we assume $a_1c_1\in E(G)$  and $a_1c_2\notin E(G)$.  Thus, $V(G_{\{b,u,c_1\}})\subseteq \{c_2,a_2\}$, and so $c_2a_2\in E(G)$, $a_2c_1\notin E(G)$. Furthermore, since $a_2,c_2\in V(G_{\{b,x,a_1\}})$, we have $ua_1\in E(G)$ and $va_1\notin E(G)$. Next, since $v,a_2\in V(G_{\{b,y,a_1\}})$, $va_2\in E(G)$ and $ua_2\notin E(G)$. It follows that
$$E(G_b) =\{xy, uv, xv, xc_1, yu,yc_2, a_1u,a_1c_1, a_2v, a_2c_2\}.$$

In the same way we  may assume  $E(G_a) =\{zt, uv, zv, zc_1, tu,tc_2, b_1u,b_1c_1, b_2v, b_2c_2\}.$
By symmetry,  $G_c = G[A\cup B]$ has the same structure as  $G_a$ and $G_b$. It follows that $z$ is adjacent to either $a_1$ or $a_2$. If $z$ is adjacent to $a_1$, then $G_{c_2}$ has the triangle $(za_1c_1)$. If $z$ is adjacent to $a_2$, then $G_{a_1}$ has the triangle $(za_2v)$. Thus, $G$ is not locally triangle-free in both cases, a contradiction, and the lemma follows.
\end{proof}

\begin{lem}\label{Qs} If $A, B, V(G_{ab}) \in \Delta(G)$, then $G$ is isomorphic to either $Q_9$ or $Q_{12}$.
\end{lem}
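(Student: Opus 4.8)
The plan is to reconstruct the entire edge set of $G$ from the hypotheses — it will depend on a single parameter $d$ and a single permutation $\pi$ — and then to rule out all but two cases by a short count of the independence number. First I would fix sizes and neighbourhoods. Since $A$ is independent, Lemma~\ref{singleton}(3) gives $I=\{c\}$, and its proof shows $G_{bc}=G[A]$, so $|A|=\alpha(G_{bc})=\alpha(G)-1$; running the same argument with $b$ in place of $a$, also $G_{ac}=G[B]$ and $|B|=\alpha(G)-1$. Likewise $V(G_{ab})$ is independent by hypothesis with $\alpha(G_{ab})=\alpha(G)-1$ by Corollary~\ref{triangle-edge}, so $|V(G_{ab})|=\alpha(G)-1$. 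Put $d:=\alpha(G)-1\ge 2$; then $V(G)=\{a,b,c\}\cup A\cup B\cup V(G_{ab})$ is a disjoint union of sizes $3,d,d,d$, and $N_G(a)=\{b,c\}\cup A$, $N_G(b)=\{a,c\}\cup B$, while Lemma~\ref{singleton}(1)--(2) and its $B$-analogue give $N_G(c)=\{a,b\}\cup V(G_{ab})$.

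Next I would pin down the edges inside $A\cup B\cup V(G_{ab})$. Each of the ``links'' $G_c=G[A\cup B]$, $G_a=G[B\cup V(G_{ab})]$ and $G_b=G[A\cup V(G_{ab})]$ is bipartite — its two natural vertex classes are independent sets of $G$ — and lies in $W_2$ by Lemma~\ref{locally-W2}; hence by Lemma~\ref{bipartite} it is a disjoint union of edges, and since it has $2d$ vertices and no isolated vertex it is a perfect matching between its two classes. After relabelling I may take $A=\{a_1,\dots,a_d\}$, $B=\{b_1,\dots,b_d\}$, $V(G_{ab})=\{w_1,\dots,w_d\}$ with $a_ib_i,\,b_iw_i\in E(G)$ and $a_iw_{\pi(i)}\in E(G)$ for some permutation $\pi$ of $\{1,\dots,d\}$, and these are all the edges lying inside $A\cup B\cup V(G_{ab})$; in particular $G[A\cup B\cup V(G_{ab})]$ is $2$-regular.

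Then I would run the count. First $\pi$ is fixed-point-free: if $\pi(i)=i$ then $\{a_i,b_i,w_i\}$ spans a triangle, and since $N_G[\{a_i,b_i,w_i\}]=\{a,b,c,a_i,b_i,w_i\}$ has only $6<3+3d$ vertices, some vertex $v$ avoids it, so $G_v$ contains this triangle, contradicting local triangle-freeness. Tracing the components of the $2$-regular graph $G[A\cup B\cup V(G_{ab})]$, the component through $a_i$ is the cycle $a_i,b_i,w_i,a_{\pi^{-1}(i)},b_{\pi^{-1}(i)},w_{\pi^{-1}(i)},\dots$ of length $3k$, where $k$ is the length of the $\pi$-cycle through $i$; so $G[A\cup B\cup V(G_{ab})]\cong C_{3k_1}\sqcup\cdots\sqcup C_{3k_m}$ with $k_1+\cdots+k_m=d$ and every $k_j\ge 2$. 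Since $\alpha(C_{3k})=\lfloor 3k/2\rfloor\ge k+1$ for $k\ge 2$, with equality exactly for $k\in\{2,3\}$, monotonicity of $\alpha$ under induced subgraphs gives
\[
 d+1=\alpha(G)\ \ge\ \alpha\!\left(G[A\cup B\cup V(G_{ab})]\right)=\sum_{j=1}^{m}\left\lfloor\frac{3k_j}{2}\right\rfloor\ \ge\ \sum_{j=1}^{m}(k_j+1)=d+m ,
\]
so $m=1$ and $d=k_1\in\{2,3\}$. For $d=2$ the permutation $\pi$ is forced to be the transposition and the data above determine $G$ uniquely on $9$ vertices, i.e.\ $G\cong Q_9$; for $d=3$, $\pi$ is a $3$-cycle, and the two $3$-cycles are conjugate in $S_3$ (they differ by a relabelling of the index set, which is an isomorphism of $G$ preserving all the matchings), so $G$ is determined up to isomorphism and $G\cong Q_{12}$.

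I expect the main obstacle to be the second step: extracting the complete edge list, and in particular verifying that all three links $G_a,G_b,G_c$ are perfect matchings, which requires Lemmas~\ref{bipartite}, \ref{locally-well-covered}, \ref{locally-W2}, \ref{singleton} and Corollary~\ref{triangle-edge} to be assembled carefully; once $G$ is recognised as the triangle $abc$ together with a disjoint union of cycles $C_{3k_j}$ carried by $A\cup B\cup V(G_{ab})$, the size bound is immediate. A small remaining point is to confirm that the two $3$-cycles in the case $d=3$ really yield isomorphic graphs, matching the graph $Q_{12}$ of Figure~\ref{fig_fourgraphs}.
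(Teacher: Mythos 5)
Your proposal is correct and follows essentially the same route as the paper: determine $|A|=|B|=|V(G_{ab})|=\alpha(G)-1$ via Lemma~\ref{singleton}, show the three links $G_a,G_b,G_c$ are perfect matchings by Lemma~\ref{bipartite}, recognise $G[A\cup B\cup V(G_{ab})]$ as a disjoint union of cycles of lengths $3k_j$ with $k_j\geqslant 2$, and use the bound $\alpha(G)\geqslant\sum\lfloor 3k_j/2\rfloor$ to force a single cycle with $k_1\in\{2,3\}$. The only cosmetic difference is how a fixed point of the permutation is excluded (you pick a vertex outside the closed neighbourhood of the triangle, the paper uses $G_{c_m}$), and both work.
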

\begin{proof}  By Lemma $\ref{singleton}$ we have $G_{bc} = G[A]$,   $ G_{ac}= G[B] $, and $N_G(a)\cap N_G(b) = \{c\}$.  In particular, $|A| = |B|=\alpha(G)-1$.

Let $C:=V(G_{ab})$ and $s:=\alpha(G)-1$. Then  $N_G(c)=\{a,b\}\cup C$, and $|A| = |B| = |C| =s\ge 2$. Assume that $A =\{a_1,\ldots,a_s\}$; $B = \{b_1,\ldots,b_s\}$ and $C =\{c_1,\ldots,c_s\}$.

Note that $G_b$ is a bipartite graph with bipartition $(A,C)$. So $G_b$ is disjoint edges by Lemma $\ref{bipartite}$. Hence, we may assume that $E(G_b) =\{a_1c_1,\ldots,a_sc_s\}$. Similarly, we may assume that $E(G_a) = \{b_1c_1, \ldots, b_sc_s\}$. Together   with this Lemma \ref{bipartite} again,  $E(G_c) = \{a_{\sigma(1)}b_1,\ldots, a_{\sigma(s)}b_s\}$, where   $\sigma$  is a permutation of the set $\{1,\ldots,s\}$.

Therefore, $G[A\cup B\cup C]$ consists of disjoint cycles, say $C_1,\ldots,C_t$. Moreover  the length of each $C_i$ is a multiple of $3$, say $3s_i$,  for $s_i\geqslant 1$.  If $s_i=1$ for some $i$, then $C_i$ is   the form $(a_jb_jc_j)$ for some $j$, and so  $C_i$ is a triangle of $G_{c_m}$ for any $m\ne j$, a contradiction. Thus, $s_i\geqslant 2$. This yields
$\alpha(C_i) =\lfloor 3s_i/2 \rfloor = s_i+\lfloor s_i/2\rfloor\geqslant s_i+1$. Thus,
$$\alpha(G[A\cup B\cup C]) = \sum_{i=1}^t \lfloor 3s_i/2 \rfloor \geqslant \sum_{i=1}^t (s_i+1) = s+t.$$
Combining with $\alpha(G[A\cup B\cup C])\leqslant \alpha(G)=s+1$, we obtain $t=1$. This means that $\alpha(G[A\cup B\cup C])$ is a cycle of length $3s$. From the equality $\alpha(G[X\cup Y\cup Z]) \leqslant \alpha(G)$ we have $\lfloor 3s/2 \rfloor \leqslant s+1$, or equivalently $\lfloor s/2 \rfloor \leqslant 1$. This forces either $s=2$ or $s=3$.

If $s=2$,  $G$ is isomorphic to $Q_9$. Otherwise, $s=3$ and $G$ is isomorphic to $Q_{12}$.
\end{proof}

\begin{lem}\label{P10}  If  $A\notin \Delta(G)$ and $B\in\Delta(G)$, then $G$ is isomorphic to $P_{10}$.
\end{lem}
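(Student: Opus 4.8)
The plan is to pin down $G$ completely and recognize it as $P_{10}$. Since $A\notin\Delta(G)$, Lemma \ref{NB} gives $\alpha(G)=3$, that $G[A]$ is one edge $xy$ together with one isolated vertex $p$, and that $G_{ab}$ consists of two isolated vertices $u,v$. Applying the $a\leftrightarrow b$ symmetric form of Lemma \ref{singleton} to the independent set $B$ yields $I:=N_G(a)\cap N_G(b)=\{c\}$; moreover, since $c$ is adjacent to every vertex of $G_{ab}$ and to no vertex of $B$ (Lemma \ref{singleton}), one computes $G_{ac}=G[B]$, which is edgeless, and since $\alpha(G_{ac})=\alpha(G)-1=2$ we get $|B|=2$, say $B=\{b_1,b_2\}$. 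As $V(G)=\{a,b,c\}\cup A\cup B\cup V(G_{ab})$, we find $|V(G)|=10=|V(P_{10})|$, so only $E(G)$ has to be determined. The ``obvious'' incidences are: the triangle $abc$; $a$ joined to $x,y,p$ and to nothing else outside $\{b,c\}$; $b$ joined to $b_1,b_2$ and to nothing else outside $\{a,c\}$; by Lemma \ref{singleton}, $c$ joined to $u,v$ but not to $p,b_1,b_2$; and $xp,yp,b_1b_2,uv\notin E(G)$.

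Next I would analyse the three local graphs $G_a,G_b,G_c$. Each is triangle-free (local triangle-freeness), lies in $W_2$ by Lemma \ref{locally-W2}, and has independence number $\alpha(G)-1=2$ by Lemma \ref{locally-well-covered}; hence by Proposition \ref{small-cases} each is of the form $C_m^c$ on its vertex set. Thus $G_a=G[\{b_1,b_2,u,v\}]$ is a pair of disjoint edges, and since $b_1b_2,uv\notin E(G)$, after renaming we may take $b_1u,b_2v\in E(G)$ and $b_1v,b_2u\notin E(G)$. Similarly $G_b=G[\{x,y,p,u,v\}]\cong C_5$; as $xy\in E(G)$ and $p$ is non-adjacent to both $x$ and $y$, $p$ must be the unique vertex of this $5$-cycle missing both ends of $xy$, so $p$ is adjacent to $u$ and to $v$, and after relabelling $u,v$ the cycle is $(xupvy)$; in particular $xv,yu\notin E(G)$. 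The only incidences still open are those between $c$ and $\{x,y\}$ and between $\{x,y,p\}$ and $\{b_1,b_2\}$.

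These are settled by a short case analysis, using that for every triangle $(\alpha\beta\gamma)$ of $G$ local triangle-freeness forces $N_G[\alpha]\cup N_G[\beta]\cup N_G[\gamma]=V(G)$. If $c$ were adjacent to both $x$ and $y$, then $(cxy)$ would be a triangle with $p$ outside all three closed neighbourhoods, a contradiction. If $c$ were adjacent to exactly one of $x,y$, say $x$, then $(acx)$ is a triangle and the covering condition forces $x\sim b_1$ and $x\sim b_2$; but then $(xb_1u)$ is a triangle with $v$ outside, a contradiction. Hence $c\not\sim x,y$, so $G_c=G[\{x,y,p,b_1,b_2\}]\cong C_5$ and, as before, $p$ is adjacent to both $b_1$ and $b_2$ while $x$ and $y$ each receive one of $b_1,b_2$. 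If $x\sim b_1$ and $y\sim b_2$, then $(pb_1u)$ is a triangle with $y$ outside, a contradiction; so $x\sim b_2$ and $y\sim b_1$. This fixes $E(G)$ completely, and one checks directly that the resulting graph is precisely $P_{10}$.

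The step I expect to be most delicate is the final case analysis: one must keep the two renamings (of $\{u,v\}$ forced by $G_b$ and of $\{b_1,b_2\}$ forced by $G_a$) mutually consistent and, in each branch, verify that the triangle produced genuinely has a vertex avoiding all three closed neighbourhoods. Everything else is bookkeeping, and once $E(G)$ is written out, matching it against the figure of $P_{10}$ is immediate.
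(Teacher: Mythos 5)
Your proof is correct, and its skeleton coincides with the paper's: both start from Lemma \ref{NB} and the symmetric form of Lemma \ref{singleton} to get $\alpha(G)=3$, $I=\{c\}$, $|V(G)|=10$, and both then identify $G_a$ as two disjoint edges and $G_b$ as a pentagon before determining the remaining adjacencies. The difference lies in the endgame. The paper fixes the edges between $\{b_1,b_2\}$ and $\{x,y\}$ by inspecting the links $G_{c_1}$ and $G_{b_2}$, and must then split on whether $G_c$ is two disjoint edges or a pentagon, ruling out the former with a separate counting argument ($|V(G_{b_1})|=6$ would exceed what a graph with $\alpha=2$ in $W_2$ allows). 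You instead first prove $c\not\sim x,y$ via the observation that a triangle $(\alpha\beta\gamma)$ in a locally triangle-free graph must satisfy $N_G[\alpha]\cup N_G[\beta]\cup N_G[\gamma]=V(G)$; this forces $|V(G_c)|=5$, so $G_c$ is a pentagon with no sub-case to exclude, and one further application of the same covering principle fixes the $A$--$B$ matching. I checked the two delicate points you flagged: the relabelling of $\{u,v\}$ imposed by the pentagon $G_b$ can be absorbed by simultaneously swapping $b_1\leftrightarrow b_2$ (which are still interchangeable at that stage), and in each excluded branch the exhibited triangle ($(cxy)$ with $p$ outside, $(xb_1u)$ with $v$ outside, $(pb_1u)$ with $y$ outside) genuinely misses the stated vertex. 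Your covering-principle route is a slightly cleaner way to organize the final case analysis; the paper's link-based route is more in keeping with the computations in its other lemmas. Both arrive at the same edge list for $P_{10}$.
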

\begin{proof} By Lemma $\ref{NB}$, we have $\alpha(G)=3$, $G[A]$   consists of one edge and one isolated vertex, and $G_{ab}$ is two isolated vertices. By Lemma $\ref{singleton}$, we imply that $G[B] = G_{ac}$, and $N_G(a)\cap N_G(b) = \{c\}$. So $|B| =\alpha(G)-1=2$, i.e. $G[B]$ is just two isolated vertices.

Let $G[A]$ be the edge $xy$ and one isolated vertex $a_1$; $G[B]$ be two isolated vertices $b_1$ and $b_2$; $G_{ab}$ be two isolated vertices $c_1,c_2$.   This yields $G$ has a vertex set $V(G) = \{a,b,c, x,y, a_1, b_1,b_2, c_1,c_2\}$ (see Figure \ref{figurelem34}).

\begin{figure}[H]
\includegraphics[scale=0.5]{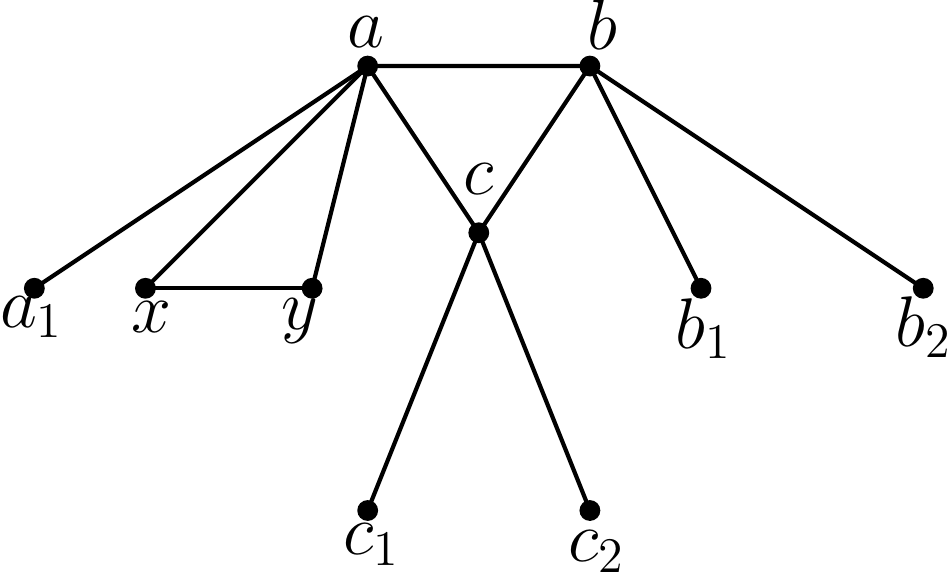}\\
\caption{The configuration for graph $G$.}\label{figurelem34}
\end{figure}

As $G_a$ is a bipartite graph with bipartition $(\{b_1,b_2\}, \{c_1,c_2\})$, it is just two disjoint edges by Lemma $\ref{bipartite}$, and so we may assume that $E(G_a) = \{b_1c_1, b_2c_2\}$.

Since $V(G_b) =\{x,y,a_1,c_1,c_2\}$ and $G_b$ is a triangle-free graph in $W_2$,  it must be a pentagon. Hence, we may assume $E(G_b) = \{xy, xc_1,c_1a_1, a_1c_2,c_2y\}$.

Note that $V(G_{c_1})=\{y,a,b,b_2,c_2\}$, so we have $E(G_{c_1}) = \{ab, bb_2, b_2c_2,c_2y, ya\}$. Hence, $b_2y\notin E(G)$. Since $(axy)$ is not a triangle in $G_{b_2}$, we must have $x\notin V(G_{b_2})$, or equivalently $b_2x\in E(G)$. Similarly, $b_1x\notin E(G)$ and $b_1y\in E(G)$.

Observe that $V(G_c) \subseteq \{x,y,a_1, b_1,b_2\}$. Since $\alpha(G_c) = 2$, we conclude that $G_c$ is either two disjoint edges or a pentagon.

Assume that $G_c$ is just two disjoint edges. Then, by Lemma $\ref{singleton}$ we have $a_1, b_1, b_2\in V(G_c)$, so the remaining vertex is either $x$ or $y$. By symmetry, we may assume it is $x$, i.e. $cx\in E(G)$.  It follows that $N_G(x) =\{a,c,y,b_2,c_1\}$, so $G_x$ must be two disjoint edges $bb_1$ and $a_1c_2$. Thus, $a_1b_1\notin E(G)$, and thus $N_G(b_1)=\{b,y,c_1\}$, and $|V(G_{b_1})| = 6$. On the other hand, $G_{b_1}$ must be either two disjoint edges or a pentagon, so $|V(G_{b_1})| \leqslant 5$, a contradiction.

Therefore  $G_c$ is a pentagon, and thus $V(G_c) = \{x,y,a_1, b_1,b_2\}$. Recall that $yb_2,xb_1\notin E(G)$, so $E(G_c) = \{xy,xb_2, b_2a_1,a_1b_1, b_1y\}$. It follows that
\begin{align*} E(G) = \{&ab, ac, aa_1, ax, ay, bc, bb_1, bb_2, cc_1, cc_2, xy, xb_2, xc_1, yb_1, yc_2, a_1b_1, a_1b_2, \\
&a_1c_1, a_1c_2, b_1c_1, b_2c_2\},
\end{align*}
so $G$ is isomorphic to $P_{10}$.
\end{proof}

\begin{lem} \label{P12}  If $A,B\notin \Delta(G)$, then $G$ is isomorphic to $P_{12}.$
\end{lem}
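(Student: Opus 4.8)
The plan is to run exactly the same machinery that produced Lemmas \ref{Qs}, \ref{P10} as in the case $A, B \notin \Delta(G)$, and to pin down $G$ by a sequence of local-cohomology-free, purely combinatorial deductions using that $G$ is locally triangle-free and in $W_2$. First I would invoke Lemma \ref{NB} (applied to the edge $ab$ via the triangle $(abc)$, and then, by the symmetry of the configuration, to $ac$ and $bc$): since $A \notin \Delta(G)$ we get $\alpha(G) = 3$, $G[A]$ is one edge plus one isolated vertex, and $G_{ab}$ consists of two isolated vertices; and since $B \notin \Delta(G)$ as well, $G[B]$ is likewise one edge plus one isolated vertex. Combined with Lemma \ref{singleton}(1)--(2), which forces $G_{bc} = G[A]$, $G_{ac} = G[B]$, this already fixes $|A| = |B| = 2$... wait, no: here $A \notin \Delta(G)$ so $|A| = \alpha(G)$ is not available; rather Lemma \ref{NB} gives $|A| = |B| = 3$. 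So the vertex set of $G$ has size $3 + |A| + |B| + |V(G_{ab})| = 3 + 3 + 3 + 2 = 11$ minus whatever overlap Lemma \ref{singleton}(3) rules out; a careful bookkeeping here (noting $N_G(a) \cap N_G(b) = \{c\}$, which should follow because $A \notin \Delta(G)$ prevents the earlier degenerate cases) should give $|V(G)| = 12$, matching $P_{12}$.

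Next I would name all twelve vertices explicitly: $a, b, c$; the edge $xy$ and isolated vertex $a_1$ in $A$; the edge $zt$ and isolated vertex $b_1$ in $B$; the two isolated vertices $c_1, c_2$ of $G_{ab}$. Then I would reconstruct the edge set by repeatedly using three tools already in hand: (i) $G_v$ is triangle-free for each vertex $v$ (locally triangle-free); (ii) $G_S$ has no isolated vertex whenever $|S| < \alpha(G) = 3$ and $S$ is independent (Lemma \ref{locally-W2}); and (iii) $G_v$ for a vertex of a triangle-free graph in $W_2$ with $\alpha = 2$ is two disjoint edges or a pentagon (Proposition \ref{small-cases} applied to $G_b$, $G_a$, $G_c$, which are triangle-free in $W_2$ with independence number $2$). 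Concretely: $G_b = G[A \cup C]$ is triangle-free with $\alpha = 2$; Claim-style arguments as in Lemma \ref{NB} show each vertex of $C$ is adjacent to exactly one of $x, y$, etc. The deductions are the same flavor as the long string of "$\{b, v, c_1\} \in \Delta(G)$, so $G_{\{b,v,c_1\}}$ is an edge" arguments in Lemma \ref{NB} and the pentagon/two-edge dichotomy arguments in Lemma \ref{P10}; run to completion they leave only one graph up to isomorphism, which one then identifies as $P_{12}$ by matching the edge list to Figure \ref{fig_fourgraphs}.

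The one subtlety I would watch is that there may be genuine case-branching (e.g. whether the "isolated" vertices $a_1$ and $b_1$ are adjacent, and how the various pentagons/matchings in $G_a, G_b, G_c$ are oriented relative to each other), and some branches will need to be excluded by exhibiting a forbidden triangle in some $G_v$ or a forbidden isolated vertex in some $G_{\{v,w\}}$ — exactly the style of contradiction used at the end of Lemma \ref{NB} ("$G_{c_2}$ has the triangle $(za_1c_1)$", "$G_{a_1}$ has the triangle $(za_2v)$"). I expect the main obstacle to be organizational rather than conceptual: keeping the symmetry among $a, b, c$ (and among the three edges of the triangle) in play to collapse the case analysis, and verifying at the end that the surviving graph is consistent — i.e. that $P_{12}$ really is locally triangle-free and in $W_2$ with $\alpha = 3$ — rather than accidentally over-constraining and getting the empty conclusion. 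A clean way to finish is to observe that the reconstructed $G$ is vertex-transitive-ish enough that it must coincide with the specific $P_{12}$ drawn in Figure \ref{fig_fourgraphs}.
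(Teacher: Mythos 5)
Your overall strategy --- apply Lemma \ref{NB} to fix $\alpha(G)=3$ and the structure of $G[A]$, $G[B]$ and $G_{ab}$, then reconstruct the edge set by repeatedly exploiting local triangle-freeness, Lemma \ref{locally-W2}, and the pentagon/two-disjoint-edges dichotomy for the graphs $G_v$ --- is exactly the paper's. But there is a genuine gap at the vertex count, and it is not a bookkeeping slip: you assert $N_G(a)\cap N_G(b)=\{c\}$, and with that the pieces $\{a,b,c\}$, $A$, $B$, $V(G_{ab})$ are pairwise disjoint and give only $3+3+3+2=11$ vertices, so you cannot reach $P_{12}$; your own arithmetic (``$11$ minus whatever overlap \dots should give $12$'') signals the problem. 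Lemma \ref{singleton}(3), which you lean on to justify $I:=N_G(a)\cap N_G(b)=\{c\}$, applies only when $A\in\Delta(G)$ --- precisely the hypothesis that fails in this lemma. The paper shows the opposite: $I$ is an independent set of size exactly $2$, say $I=\{c,d\}$, and the twelfth vertex $d$ is essential to $P_{12}$. Establishing $|I|=2$ is a nontrivial step (the paper's Claims 1 and 5): one first shows $I\in\Delta(G)$ using that the isolated vertex $a_1$ of $G[A]$ is adjacent to no vertex of $I$; then, since $\{x,z\}$ is independent with $\alpha(G_{\{x,z\}})=1$, the graph $G_{\{x,z\}}$ is a single edge $c_2d$ with $d\in I$; the graph $G_d$ contains the $3$-path $xb_1a_1z$ and hence must be a pentagon, forcing $|I|\geqslant 2$; and $I\cup\{a_1\}$ being independent with $\alpha(G)=3$ forces $|I|\leqslant 2$.

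The remainder of your plan (the pentagons $G_a$, $G_b$, the forced adjacencies $yz,xt,a_1b_1,a_1z,a_1t,b_1x,b_1y$, and the exclusion of branches by exhibiting a triangle in some $G_v$) does match the paper's Claims 2--6 in both content and flavor, and would go through once $d$ is on the table. But without identifying $d$ you land in the failure mode you yourself flag: the constraints on an $11$-vertex candidate are inconsistent, so the argument as proposed over-constrains and cannot recover the actual graph.
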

\begin{proof} Let $C := V(G_{ab})$. By Lemma $\ref{NB}$ we have $\alpha(G)=3$, both $A$ and $B$    consist  of one edge and one isolated vertex, and $G[C]$ is two isolated vertices. We may assume that $G[A]$ is one  edge $xy$ and an isolated vertex $a_1$; $G[B]$ is one edge $zt$ and an isolated vertex $b_1$; and $G[C]$ is two isolated vertices $c_1$ and $c_2$.

\medskip

{\it Claim  $1$: $I\in\Delta(G)$.}

\medskip

Indeed, assume on the  contrary that $uv\in E(G)$ for some $u,v\in I$. By Lemma \ref{singleton},  $a_1$ is not adjacent to any vertex in $I$. Thus, $(buv)$ is a triangle in $G_{a_1}$, a contradiction. Hence, $I\in \Delta(G)$, as claimed.

\medskip

Since $G_a$ is a triangle-free graph in $W_2$ and $V(G_a) =\{z,t,b_1, c_1,c_2\}$, $G_a$ must be a pentagon. Because $zt\in V(G)$, we may assume that $E(G_a) = \{zt, tc_2, c_2b_1, b_1c_1, c_1 z\}$.  Similarly, by symmetry we may assume that $E(G_b) = \{xy, yc_2, c_2a_1,a_1c_1,c_1x\}$ (see Figure \ref{fig_lem35}).

\begin{figure}[H]
\includegraphics[scale=0.5]{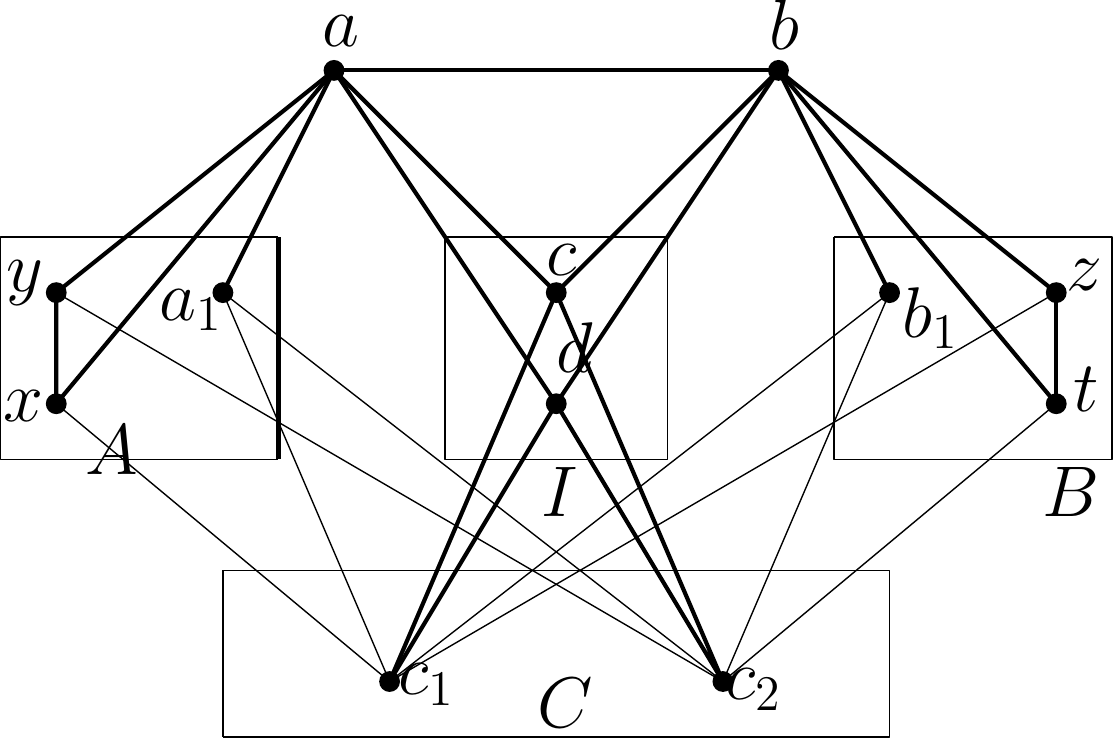}\\
\caption{The configuration for graph $G$.} \label{fig_lem35}
\end{figure}

\medskip

{\it Claim  2: $yt,xz \notin E(G)$ and $yz, xt\in E(G)$.}

\medskip

Indeed, as $V(G_{c_1}) =\{a,b, t,c_2,y\}$, we have $G_{c_1}$ is a pentagon, and so $E(G_{c_1}) =\{ab,bt,tc_2,c_2y,ya\}.$  It follows that $yt\notin E(G)$. Together with $G_{y}$   a triangle-free graph,  we imply that  $yz\in E(G)$. Similarly, $xz\notin E(G)$ and $xt\in V(G)$.

\medskip

{\it Claim  3: $a_1b_1 \in E(G)$.}

\medskip

Indeed, if $a_1b_1\notin E(G)$, then $I\cup\{a_1,b_1\}$ is an independent set of $G$. Since $\alpha(G)=3$, we have $|I| = 1$, so that $I =\{c\}$. Observe that $\{a, a_1, c,z,t\} \subseteq V(G_{b_1})$. Together with the fact that $G_{b_1}$ is a triangle-free graph in $W_2$ with $\alpha(G_{b_1}) =2$, it implies that $G_{b_1}$ is a pentagon with $V(G_{b_1}) = \{a, a_1, c,z,t\}$. Hence, $b_1x,b_1y\in E(G)$. But then $G_{a_1}$ has the triangle $(b_1xy)$, a contradiction, and the claim follows.

\medskip

{\it Claim  4: $a_1z, a_1t, b_1x, b_1y \in E(G)$.}

\medskip

Indeed, we only prove $a_1z\in E(G)$ and others are proved similarly. Assume on the contrary that $a_1z\notin V(G)$. Then, $G_z$ has a triangle $(a_1b_1c_2)$, a contradiction. Therefore, $a_1z\in E(G)$, as claimed.

\medskip

{\it Claim 5: $|I| =2$.}

\medskip

Indeed, since $\alpha(G_{\{x,z\}})=1$, $G_{\{x,z\}}$   must be an edge. Since $c_2\in V(G_{\{x,z\}})$, we imply that $G_{\{x,z\}}$ is the edge $c_2d$ for some $d\in I$. Note that $G_d$ is a triangle-free graph in $W_2$ and $\alpha(G_d)=2$, so it is either a pentagon or two disjoint edges. Since $G_d$ contains  3-path  $xb_1a_1z$,  $G_d$ must be a pentagon. On the other hand, $E(G_d)\subseteq \{a_1,b_1,z,x\}\cup (I\setminus\{d\})$. It follows that $|I\setminus\{d\}| \ne 0$, so $|I|\geqslant 2$. By Claim $1$ and Lemma $\ref{singleton}$, we get $I\cup\{a_1\}$ is an independent set of $G$. Since $\alpha(G)=2$, $|I| \leqslant 2$. It yields $|I| = 2$, as claimed.

\medskip

So now we may assume that $I = \{d,c\}$. In particular, $V(G_d) = \{c, a_1,b_1,z,x\}$, and hence, $cx,cz \in E(G)$. Note also that $dx,dz\notin E(G)$.

\medskip

{\it Claim  $6$: $cy, ct \notin E(G)$.}

\medskip

Indeed, if $cy\in E(G)$, then $G_{a_1}$ has a triangle $(xyc)$, a contradiction, and then $cy\notin E(G)$. Similarly, $ct\notin E(G)$.

\medskip

Now using Claim $6$, from the graph $G_c$ we get $dy,dt\in E(G)$.   In summary, we obtain:
\begin{align*}
E(G) = \{&ab, aa_1, ac,ad, ax, ay,
bb_1, bc, bd, bt, bz,
cx, cz, cc_1,cc_2,
dy, dt, dc_1, dc_2, a_1b_1, \\
&a_1z,a_1t, a_1c_1,a_1c_2, b_1x, b_1y,b_1c_1,b_1c_2, xy,xt, xc_1,yz,yc_2, zt,zc_1,tc_2
\},
\end{align*}
so $G$ is isomorphic to $P_{12}$.
\end{proof}

We are in position to prove the main result of this section.

\begin{thm} \label{main-theorem}  Let   $G$ be  a graph with $\alpha(G)\ge 3$ which is not a join of its two proper  subgraphs. Then, $G$ is a locally triangle-free graph in $W_2$ if and only if $G$ is a triangle-free graph in $W_2$, or
$G$ is isomorphic to one of $Q_9, Q_{12}, P_{10}$, or $P_{12}$.
\end{thm}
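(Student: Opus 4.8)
The plan is to reduce Theorem~\ref{main-theorem} to the case analysis already carried out in Lemmas~\ref{Qs}, \ref{P10}, and~\ref{P12}. The ``if'' direction is essentially free: a triangle-free graph in $W_2$ is vacuously locally triangle-free, and for each of the four exceptional graphs $Q_9$, $Q_{12}$, $P_{10}$, $P_{12}$ one checks directly (from the explicit edge lists produced in the proofs of Lemmas~\ref{Qs}--\ref{P12}, or just by inspection of the figures) that they lie in $W_2$ and that $G_v$ is triangle-free for every vertex $v$; since each of these graphs is vertex-transitive enough that only one or two orbits of vertices need be checked, this is a finite verification. So the substance is the ``only if'' direction.

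For the ``only if'' direction, suppose $G$ is a locally triangle-free graph in $W_2$ with $\alpha(G)\geqslant 3$ that is not a join of two proper subgraphs. If $G$ is triangle-free we are done, so assume $G$ contains a triangle $(abc)$. Then $G$ satisfies the three standing hypotheses (i)--(iii) opening Section~3, and by Corollary~\ref{triangle-edge} we have $G_{ab}, G_{bc}, G_{ca}$ all nonempty with $\alpha(G_{ab})=\alpha(G_{bc})=\alpha(G_{ca})=\alpha(G)-1$. Now set $A:=N_G(a)\setminus N_G[b]$ and $B:=N_G(b)\setminus N_G[a]$ as in the section, and split into cases according to whether $A$ and $B$ are independent sets of $G$. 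By Lemma~\ref{GA}, each of $G[A]$, $G[B]$ is either totally disconnected or consists of one edge plus $\alpha(G)-2$ isolated vertices, and moreover (Lemma~\ref{singleton}(1), which shows every vertex of $I:=N_G(a)\cap N_G(b)$ is adjacent to all of $V(G_{ab})$) $G_{ab}$ itself is an induced subgraph of $G[N_G(c)\setminus N_G[a]]$, hence also has this dichotomy. The three cases are then exactly:
\begin{itemize}
\item $A,B,V(G_{ab})\in\Delta(G)$: Lemma~\ref{Qs} gives $G\cong Q_9$ or $G\cong Q_{12}$;
\item $A\notin\Delta(G)$, $B\in\Delta(G)$ (equivalently, by symmetry, exactly one of $A,B$ independent): Lemma~\ref{P10} gives $G\cong P_{10}$;
\item $A,B\notin\Delta(G)$: Lemma~\ref{P12} gives $G\cong P_{12}$.
\end{itemize}

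The one gap to fill is to confirm that these three cases are exhaustive, i.e.\ that the status of $V(G_{ab})$ (independent or not) is forced once one knows the status of $A$ and $B$, so that no fourth case ``$A,B\in\Delta(G)$ but $V(G_{ab})\notin\Delta(G)$'' or ``$A\notin\Delta(G)$, $B\in\Delta(G)$, $V(G_{ab})\notin\Delta(G)$'' can arise. This is where I expect the main (albeit modest) obstacle: one must argue, using Lemma~\ref{singleton} together with the symmetry of the three edges of the triangle $(abc)$, that $V(G_{ab})\notin\Delta(G)$ forces a triangle in $A$ or $B$, or more precisely that the case analysis of Lemmas~\ref{NB}--\ref{P12} already covers every configuration. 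Concretely: if $V(G_{ab})$ contains an edge $xy$, then since $c$ is adjacent to both $x$ and $y$ by Lemma~\ref{singleton}(1), $(cxy)$ would be a triangle — but that triangle has its edge $xy$ lying in $N_G(a)\setminus N_G[?]$ for the appropriate relabeling, reducing the ``$V(G_{ab})$ not independent'' situation to the ``$A$ not independent'' situation for a different choice of triangle edge. So after invoking this relabeling symmetry, Lemmas~\ref{Qs}, \ref{P10}, \ref{P12} between them handle all cases, and the theorem follows.
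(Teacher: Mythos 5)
Your proposal is correct and follows essentially the same route as the paper: the paper organizes the case split as ``every triangle has independent $N_G(u)\setminus N_G[v]$'' versus ``some triangle does not,'' which is exactly your relabeling observation that an edge in $V(G_{ab})$ lies in $N_G(c)\setminus N_G[a]$ (via Lemma \ref{singleton}(1)) and hence reduces to the non-independent case for a reordered triangle. The gap you flagged is precisely the point the paper's Case~1/Case~2 dichotomy handles, and you close it the same way.
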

\begin{proof} If $G$ is a triangle-free graph in $W_2$ or one of $Q_9, Q_{12}, P_{10}$, or $P_{12}$, then we can check that $G$ is also a locally triangle-free graph in $W_2$.

Conversely,  assume that $G$ is a locally triangle-free graph in $W_2$. It suffices to prove that if $G$ is not triangle-free, then $G$ is one of $Q_9, Q_{12}, P_{10}$, or $P_{12}$. We now consider two possible cases:

\medskip

{\it Case 1: For every triangle $(uvw)$ of $G$, we have $N_G(u)\setminus N_G[v]\in \Delta(G)$.}

\medskip

Let $(abc)$ be a triangle of $G$ and let
 $A:=N_G(a)\setminus N_G[b], B := N_G(b)\setminus N_G[a]$, and $C := N_G(c)\setminus N_G[a]$
so that $A,B,C\in \Delta(G)$. By Lemma $\ref{singleton}$, we have $V(G_{ab})\subseteq N_G(c)$, so $G_{ab}$ is an induced subgraph of $G[C]$. Thus,  $G_{ab}$ is a totally disconnected graph. By Lemma $\ref{Qs}$, we have $G$ is isomorphic to either $Q_9$ or $Q_{12}$.

\medskip

{\it Case 2: There is a triangle $(abc)$ of $G$ such that $N_G(a)\setminus N_G[b]\notin \Delta(G)$.}

\medskip

Let  $A:=N_G(a)\setminus N_G[b] \text{ and } B := N_G(b)\setminus N_G[a],$ so that  $A\notin \Delta(G)$. If $B\in \Delta(G)$, then $G$ is isomorphic to $Q_{10}$ by Lemma $\ref{P10}$. Otherwise, $B\notin \Delta(G)$, and so $G$ is isomorphic to $P_{12}$ by Lemma $\ref{P12}$. The proof of the theorem is complete.
\end{proof}

\section{Buchsbauness of second powers of edge ideals}

Let $R := K[x_1,\ldots, x_n]$ be the polynomial ring over a field $K$ and let $G$ be a graph with vertex set $\{x_1,\ldots,x_n\}$. In this section we characterize graphs $G$ such that $I(G)^2$ are Buchsbaum. First we characterize  locally triangle-free Grorenstein graphs.

\begin{thm} \label{thmGorenstein}  Let $G$ be a locally triangle-free graph. Then  $G$ is  Gorenstein if and only if $G$ is either a triangle-free graph in $W_2$, or $G$ is isomorphic to one of  $C_n^c$ ($n\geqslant 6$), $Q_{9}$, $Q_{12}$, $P_{10}$ or $P_{12}$.
\end{thm}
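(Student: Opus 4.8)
The plan is to reduce the Gorenstein statement to the structural classification already in hand (Theorem \ref{main-theorem}) together with known facts about Gorenstein edge ideals. First I would record the easy implications. If $G$ is a triangle-free graph in $W_2$, then $G$ is Gorenstein by \cite[Theorem 4.4]{HT} (or the relevant cited result on triangle-free $W_2$ graphs); and for each of the finitely many exceptional graphs $C_n^c$ ($n\geqslant 6$), $Q_9$, $Q_{12}$, $P_{10}$, $P_{12}$ one checks Gorensteinness directly — for $C_n^c$ one notes that its independence complex is (homotopy equivalent to) a sphere, so $R/I(G)$ is Gorenstein, and for $Q_9,Q_{12},P_{10},P_{12}$ one verifies the Gorenstein condition by a finite (machine-assisted) computation of the canonical module or the graded Betti numbers. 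This handles the "if" direction.

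For the converse, suppose $G$ is locally triangle-free and Gorenstein. Since Gorenstein implies $G\in W_2$ by \cite[Lemma 2.5]{HT}, $G$ is a locally triangle-free graph in $W_2$. The next step is to dispose of the case where $G$ is a join of two proper subgraphs: a Gorenstein graph cannot be such a join unless the join is a single edge-type degenerate case, because if $G=G_1*G_2$ then $\Delta(G)=\Delta(G_1)\cup\Delta(G_2)$ is disconnected (Lemma \ref{disconnected}), and disconnectedness of the independence complex forces the canonical module / top local cohomology to have the wrong structure for Gorensteinness when $\dim\geqslant 1$; concretely, a join decomposition makes $R/I(G)$ a "reducible" configuration whose type is $\geqslant 2$ unless one of the factors is a single vertex — but $G$ has no isolated vertices, so a single-vertex factor is impossible, forcing $\alpha(G)=1$, i.e. $G=K_n$, which is handled by Proposition \ref{small-cases}(1) and is indeed Gorenstein only for... — actually $K_n$ with $n\geqslant 3$ contains triangles and $K_n=K_1*\cdots$, so here one checks $K_n$ is Gorenstein iff $n=2$, i.e. it falls under "triangle-free graph in $W_2$". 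So after this reduction we may assume $G$ is not a join of two proper subgraphs. If $\alpha(G)\leqslant 2$, Proposition \ref{small-cases} gives $G=K_n$ or $G=C_n^c$; among these exactly $K_2$ (triangle-free $W_2$) and $C_n^c$ with $n\geqslant 6$ survive the Gorenstein test (for $n=4,5$, $C_n^c$ is $K_2$ or $C_5$, already covered or not Gorenstein as appropriate). If $\alpha(G)\geqslant 3$, Theorem \ref{main-theorem} applies and tells us $G$ is either a triangle-free graph in $W_2$ or one of $Q_9,Q_{12},P_{10},P_{12}$; the triangle-free $W_2$ ones are Gorenstein, and the four exceptional graphs were checked above. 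Collecting the surviving cases gives exactly the list in the statement.

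\textbf{Main obstacle.} The structural skeleton is entirely supplied by Theorem \ref{main-theorem}, so the real work is twofold: (i) carefully handling the join / small-$\alpha$ cases so that the $C_n^c$ family is captured (including checking which small $n$ do and do not give Gorenstein rings — the cases $n=4,5$ must be excluded cleanly), and (ii) the finite verification that $Q_9,Q_{12},P_{10},P_{12}$ are Gorenstein while confirming that no triangle-containing triangle-free-$W_2$ graph slips in. Part (i) is the subtler point: I expect one needs a clean lemma saying that a Gorenstein graph with no isolated vertices is not a join of two proper subgraphs (equivalently, its independence complex is connected) once $\dim R/I(G)\geqslant 1$, which follows because a disconnected $\Delta$ has a disconnected (hence non-Gorenstein, type $\geqslant 2$) Stanley–Reisner ring in positive dimension, so that the classification of Theorem \ref{main-theorem} can actually be invoked. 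Part (ii) is routine but must be done, either by exhibiting explicit Gorenstein behaviour (e.g. $C_n^c$ has a sphere as independence complex, and the $Q$/$P$ graphs can be shown to have independence complex a homology sphere, or one simply cites a computer algebra verification).
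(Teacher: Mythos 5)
Your proposal follows essentially the paper's route: Proposition \ref{small-cases} for $\alpha(G)\leqslant 2$, connectivity of $\Delta(G)$ (Lemma \ref{disconnected} plus Stanley's criteria) to exclude joins when $\alpha(G)\geqslant 3$, Theorem \ref{main-theorem} for the structure, and a direct verification of the exceptional graphs. Three points where your version is weaker than the paper's and should be repaired. (i) The verification that $Q_9,Q_{12},P_{10},P_{12}$ are Gorenstein must be characteristic-free; a machine computation of the canonical module or the graded Betti numbers over one field proves nothing about other characteristics, and the paper itself stresses that Gorensteinness of edge ideals can depend on the base field. The paper's argument is the one you list only as an alternative: $\Delta(Q_9)$, $\Delta(P_{10})$, $\Delta(P_{12})$ are the boundary complexes of the triaugmented triangular prism, the gyroelongated square bipyramid and the icosahedron (triangulations of $\mathbb S^2$), and $\Delta(Q_{12})$ is an explicit triangulation of $\mathbb S^3$; that is the route that actually works. (ii) Your join-case reasoning is a non sequitur: a single-vertex factor of a join is adjacent to every other vertex, so it is not an isolated vertex of $G$, and the hypothesis that $G$ has no isolated vertices rules nothing out here. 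The correct statement is simply that a disconnected $\Delta(G)$ of dimension $\geqslant 1$ is not Cohen--Macaulay, so a Gorenstein join forces $\alpha(G)=1$ and hence $G=K_2$; you reach the right conclusion but by an invalid intermediate step. (iii) $C_4^c$ (which is two disjoint edges, not $K_2$) and $C_5^c\cong C_5$ are both Gorenstein; they are absent from the exceptional list only because they are triangle-free and already fall under the first clause, not because they ``fail the Gorenstein test'' as you suggest.
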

\begin{proof} We consider three cases:

\medskip
{\it Case 1: $\alpha(G)=1$.} Then $G$ is a complete graph. It is well known that all Gorenstein complete graphs are just $K_1$ and $K_2$, so the theorem holds true in this case.

\medskip
{\it Case 2: $\alpha(G)=2$.} Note that $C_n^c$ is Gorenstein for any $n\geqslant 4$ because the geometric realization of $\Delta(C_n^c)$ is isomorphic to the $1$-dimensional sphere. Thus, the theorem follows from \cite[Lemma 2.5]{HT} and Proposition \ref{small-cases}.

\medskip
{\it Case 3: $\alpha(G) \geqslant 3$.} If $G$ is Gorenstein,  by \cite[Proposition 3.3 in Chapter 0 and Corollary 4.2 in Chapter II]{S} the complex $\Delta(G)$ is connected. Together with Lemma \ref{disconnected}, it follows that $G$ is not a join of its two proper subgraphs.  By Theorem $\ref{main-theorem}$, $G$ is either a triangle-free graph in $W_2$ or isomorphic to one of $Q_{9}$, $Q_{12}$, $P_{10}$ or $P_{12}$.

Conversely, if $G$ a triangle-free graph in  $W_2$, then $G$ is Gorenstein by \cite[Theorem 3.4]{HT}. If  $G$ is isomorphic to   $Q_{9}$ (resp.   $P_{10}$, and $P_{12}$) as indicated in Figure \ref{fig_fourgraphs}, its edges are diagonals of a {\it triaugmented triangular prism}  (resp. {\it gyroelongated square bipyramid}, and {\it icosahedron}) shown in Figure \ref{polytopes}.  In other words,  all maximal independent sets of $G$ are triangles of such polytopes, and the  geometric realization of $\Delta(G)$ is isomorphic to the 2-dimensional sphere $\mathbb S^2$.  Therefore, $G$ is  a Gorenstein graph.

\begin{figure}[H]
 \begin{tabular}{ccccc}
\includegraphics[scale=0.46]{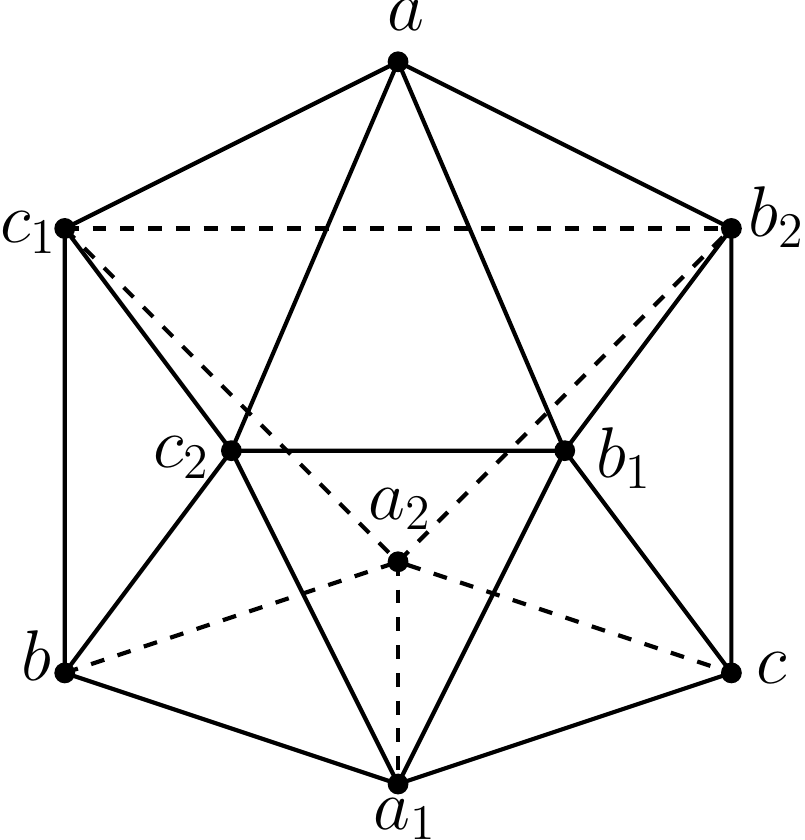}
&\qquad &
\includegraphics[scale=0.5]{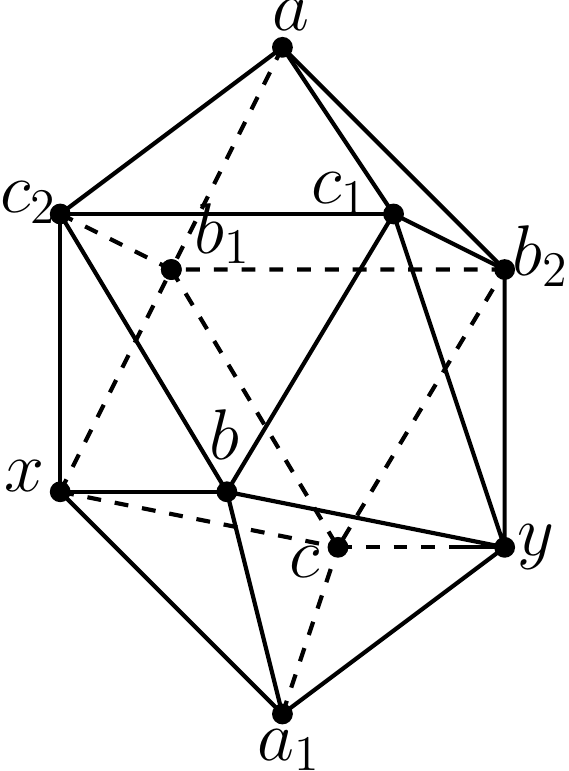}
&\qquad &
\includegraphics[scale=0.55]{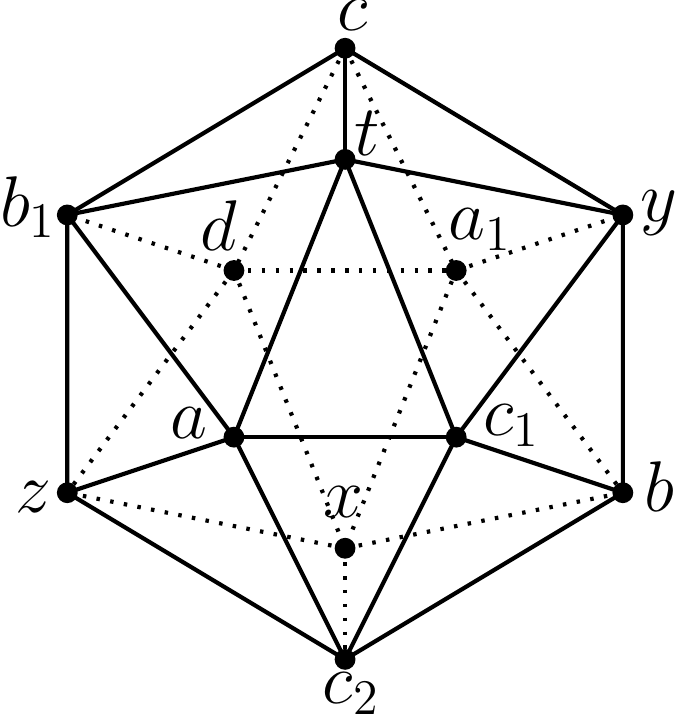}
\\[5pt]
{Triaugmented}
&&
{Gyroelongated}
&&
{Icosahedron}
\\
{triangular prism}
&&
{square bipyramid}
\end{tabular}
\caption{Three polytopes associated with graphs $Q_9,P_{10},P_{12}.$}
\label{polytopes}
\end{figure}

Finally, if  $G$ is isomorphic to $Q_{12}$ (see Figure \ref{fig_fourgraphs}), its maximal independent sets are listed as follows:
\begin{center}
\begin{tabular}{lllllll}
$ab_1b_2b_3$&
$ab_1b_2c_3$&
$ab_1b_3c_2$&
$ab_1c_2c_3$&
$ab_2b_3c_1$&
$ab_2c_1c_3$&
$ab_3c_1c_2$
\\
$ac_1c_2c_3$&
$ba_1a_2a_3$&
$ba_1a_2c_3$&
$ba_1a_3c_2$&
$ba_1c_2c_3$&
$ba_2a_3c_1$&
$ba_2c_1c_3$
\\
$ba_3c_1c_2$&
$bc_1c_2c_3$&
$ca_1a_2a_3$&
$ca_1a_2b_1$&
$ca_1a_3b_3$&
$ca_1b_1b_3$&
$ca_2a_3b_2$
\\
$ca_2b_1b_2$&
$ca_3b_2b_3$&
$cb_1b_2b_3$&
$a_1a_2b_1c_3$&
$a_1a_3b_3c_2$&
$a_1b_1b_3c_2$&
$a_1b_1c_2c_3$
\\
$a_2a_3b_2c_1$&
$a_2b_1b_2c_3$&
$a_2b_2c_1c_3$&
$a_3b_2b_3c_1$&
$a_3b_3c_1c_2$&
\end{tabular}
 \end{center}
It implies that  the geometric realization of $\Delta(G)$ is a triangulation of $3$-dimensional sphere $\mathbb S^3$ with face vector  $(12, 45, 66, 33)$ (see \cite{LT}), so $G$ is a Gorenstein graph. The proof of the theorem is complete.
\end{proof}

\begin{lem}\label{discreteW2} Assume that  $G$ is  a well-covered, locally triangle-free, connected graph such that
\begin{enumerate}
\item $\alpha(G)\geqslant 3$; and 
\item $G$ is not a join of its two  proper subgraphs; and 
\item Each nontrivial connected component of $G_v$ is in $W_2$ for every vertex $v$.
\end{enumerate}
Then, $G$ is in $W_2$.
\end{lem}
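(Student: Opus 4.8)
The plan is to reduce the claim to the well--coveredness of all vertex--deletions, recast this as a condition on the local subgraphs $G_v$, and then rule out the single configuration that could obstruct it. For the reduction: it suffices to show $G\setminus v$ is well--covered for every $v$, because the other half of the definition of $W_2$, that $\alpha(G\setminus v)=\alpha(G)$, is automatic — $G$ is connected with $\alpha(G)\ge 3$, hence has no isolated vertex, so some neighbour of $v$ lies in a maximal (by well--coveredness, maximum) independent set of $G$ avoiding $v$.

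For the reformulation, fix $v$. A maximal independent set $T$ of $G\setminus v$ either meets $N_G(v)$ — and is then maximal in $G$, of size $\alpha(G)$ — or is disjoint from $N_G(v)$, in which case $T$ is a maximal independent set $M$ of $G_v$, $M\cup\{v\}$ is maximal in $G$ so $|M|=\alpha(G)-1$, and $M$ is maximal in $G\setminus v$ exactly when $V(G)\setminus N_G[M]=\{v\}$. By Lemma \ref{locally-well-covered}, for any maximal independent set $M$ of $G_v$ the subgraph $G_M$ is well--covered with $\alpha(G_M)=\alpha(G)-|M|=1$, hence a non--empty clique; since $M\ne\emptyset$ it is an induced subgraph of $G_m$, so triangle--free, so it has at most two vertices, and $v$ is one of them. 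Thus $G\setminus v$ is well--covered if and only if every maximal independent set $M$ of $G_v$ has $|V(G_M)|=2$, i.e.\ there is always some $w\ne v$ with no neighbour in $M$.

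Suppose now there is a \emph{bad pair} $(v,M)$: a maximal independent set $M$ of $G_v$ with $V(G)\setminus N_G[M]=\{v\}$. Writing $N:=N_G(v)$ and $Q:=N_G(M)\setminus N$ we obtain $V(G)=\{v\}\sqcup N\sqcup Q\sqcup M$ with $M\cup\{v\}$ a maximum independent set, $G_v=G[Q\cup M]$, every vertex of $Q$ and of $N$ adjacent to $M$, and every vertex isolated in $G_v$ lying in $M$ with all its neighbours in $N$. I would now argue by strong induction on $|V(G)|$, using that each $G_u$ again satisfies all the hypotheses — in particular (3) is inherited, since $(G_u)_w=G_{\{u,w\}}$ is an induced subgraph of $G_u$ whose nontrivial components, by Lemma \ref{locally-W2} applied componentwise, are again in $W_2$. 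If $v$ is non--isolated in some $G_m$, let $C$ be its component there; $C$ is nontrivial, hence $C\in W_2$, but $(M\setminus m)\cap V(C)$ is a maximal independent set of $C\setminus v$ of size $\alpha(C)-1<\alpha(C)$ (the only vertex of $C$ missing every vertex of $M\setminus m$ being $v$, by badness), contradicting $C\in W_2$ — unless $\alpha(C)=1$, where $C=\{v,w\}$ is a triangle--free clique and $w$ has no neighbour in $M$, again contradicting badness. Hence $v$ is isolated in $G_m$ for every $m\in M$, so $N\subseteq N_G(m)$ for all $m$, i.e.\ $N$ is complete to $M$; if moreover $Q=\emptyset$ then $N_G(m)=N$ for all $m$ and $G=G[N]*G[\{v\}\cup M]$ is a join of two proper subgraphs, contradicting hypothesis (2).

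The remaining, genuinely delicate, case is that $v$ is isolated in every $G_m$ but $Q\ne\emptyset$ — then some $m\in M$ has a neighbour in $Q$, and some vertices of $M$ may be \emph{twins} of $v$ (those $m$ with $N_G(m)=N$). Here I would run the reformulation step \emph{inside} $G_v$: if $G_v$ has no isolated vertex it lies in $W_2$, and Lemma \ref{bipartite} forces each $(G_v)_{M\setminus m}$ to be a single edge $\{m,m'\}$ with $m'\in Q$, after which the \emph{private} neighbours $m'$ combined with the completeness of $N$ to $M$ and the bad condition should produce either a join of $G$ or a maximal independent set of $G$ of length below $\alpha(G)$; and the presence in $M$ of a twin of $v$ must be excluded by a global well--coveredness count, since locally it is harmless. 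This bookkeeping of interlocking sub--cases — plus the base case $\alpha(G)=3$, which I would settle by hand — is where I expect essentially all the difficulty to lie; once it is done, no bad pair exists and $G\in W_2$.
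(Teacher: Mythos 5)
Your reduction is sound and, modulo notation, lands exactly where the paper's proof lands after its opening step: the reformulation via ``bad pairs'' $(v,M)$ is correct (a maximal independent set of $G\setminus v$ missing $N_G(v)$ is a maximal independent set of $G_v$ of size $\alpha(G)-1$, and $G_M$ is a triangle-free clique containing $v$, by Lemma~\ref{locally-well-covered}); the argument that $v$ must be isolated in every $G_m$ is valid (the component $C$ of $G_m$ containing $v$ would be a nontrivial graph in $W_2$ in which $(M\setminus m)\cap V(C)$ is a deficient maximal independent set of $C\setminus v$); and the join contradiction when $Q=\emptyset$ is correct. At that point you hold a non-edge $vm$ with $N_G(v)\subseteq N_G(m)$ and $G_{\{v,m\}}\ne\emptyset$, which is precisely the configuration the paper's proof reaches (with $b=v$, $a=m$).

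However, the case you yourself label ``genuinely delicate'' --- $Q\ne\emptyset$ --- is where essentially the entire content of the lemma lies, and you do not prove it: that passage is written in the conditional (``should produce either a join \dots or a maximal independent set \dots''; ``must be excluded by a global well-coveredness count'') and ends by conceding that ``essentially all the difficulty'' remains. That is a genuine gap, not a routine verification. In the paper this case is closed by first showing that every vertex of $G_{\{v,m\}}$ is adjacent to all of $N_G(v)$, then choosing $z\in N_G(m)\setminus N_G(v)$ with $N_G(v)\not\subseteq N_G(z)$ (if no such $z$ exists, $G$ is a join over $G[N_G(v)]$), and applying Lemma~\ref{bipartite} twice --- once to the nontrivial bipartite component of $G_z$ containing $v$, and once to $G_v$ itself --- to force $G_{\{v,m\}}=\emptyset$, contradicting $\alpha(G)\geqslant 3$. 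Nothing of this mechanism appears in your sketch. A secondary problem: the strong induction on $|V(G)|$ you announce is not actually available, since $G_u$ need not inherit the hypotheses (it can be disconnected, have $\alpha\leqslant 2$, or be a join of proper subgraphs); the steps you do carry out only invoke hypothesis (3) directly and therefore survive, but you cannot lean on an inductive hypothesis to finish the open case.
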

\begin{proof} Assume on the contrary that $G$ is not in $W_2$. By \cite[Lemma 2]{FHN},  there would be an  independent set $S$ of $G$ such that  $G_S$ has an isolated vertex, say $b$. Let $a$ be a vertex in $S$ so that $b$ is a vertex of $G_a$. Let $G'$ be the connected component of $G_a$ such that $b\in V(G')$. If $G'$ is nontrivial, then $G'$ is in $W_2$. Let $S' = S \cap V(G')$. Then, $S'$ is  an independent set of $G'$ and $G'_{S'}$ is an induced subgraph of $G_S$. But then, $b$ is an isolated vertex of $G'_{S'}$ which contradicts Lemma $\ref{locally-W2}$. Thus, $G'$ is a trivial graph. In other words, $b$ is an isolated vertex of $G_a$.

Let $A:=N_G(a)$ and $B:= N_G(b)$. Then $ab\notin E(G)$ and  $B\subseteq A$. Note that $A$ and $B$ are not empty since the graph $G$ is connected.  Let $H:=G_{\{a,b\}}$.  By Lemma \ref{locally-well-covered},  $H$ is well-covered with $\alpha(H) = \alpha(G)-2\geqslant 1$. In particular, $H\ne\emptyset$.

\medskip

{\it Claim:  Each  vertex of $H$ is adjacent to all  vertices in $B$.}

\medskip

Indeed, assume on the contrary that $uv\notin E(G)$ for some $u\in V(H)$ and $v\in B$. Since $G_v$ has a connected component, say $\Gamma$, which contains a 2-path $avb$, $\Gamma \in W_2$ and   $b$ is an  isolated vertex of $\Gamma_a$, which contradicts Lemma $\ref{locally-W2}$, and the claim follows.

\medskip

Let  $Z := A\setminus B$. Then, $Z = N_{G_b}(a)$, and so $Z\in\Delta(G)$ because $G_b$ is a triangle-free graph.  Now if either $Z =\emptyset$ or $B\subseteq N_G(z)$ for all $z\in Z$, then by Claim above  we would have
$$G = G[B] * G[\{a,b\}\cup Z\cup V(H)],$$
a contradiction. Hence, $Z\ne \emptyset$ and there is $z\in Z$ such that $B\not \subseteq N_G(z)$.

Next we consider the graph $G_z$.  Let   $H' := H \setminus N_G(z)$ and $B' := B\setminus N_G(z)$ (see Figure \ref{graphGz}). Then, $B'\ne \emptyset$. Let
\begin{align*}
 &Z_1 := \{z_1\in Z\setminus \{z\}\mid    z_1b'\in E(G) \text{ for some } b'\in  B'\},\\
&Z_2:= \{z_2\in Z\setminus \{z\}\mid z_2h'\in E(G) \text{ for some } h'\in  V(H')\}, \text{ and }  \\
 &Z_3:=Z\setminus (\{z\} \cup Z_1\cup Z_2).
\end{align*}

\begin{figure}[H]
\includegraphics[scale=0.6]{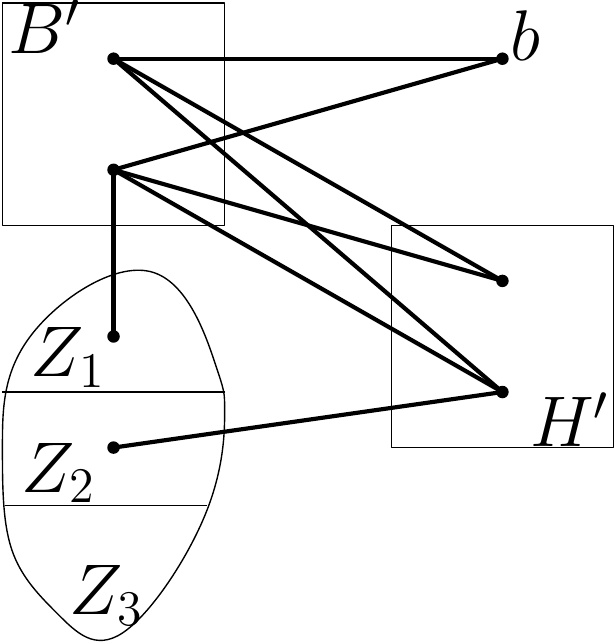}\\
\caption{The graph $G_z$.}
\label{graphGz}
\end{figure}

Note that all $Z_1, Z_2, Z_3$ are independent sets of $G$. Inside the triangle-free graph $G_z$ we have $B' = N_{G_z}(b)$ and  $B'\in \Delta(G)$. Furthermore, by Claim above we imply that   $Z_1 \cap Z_2 = \emptyset, N_G(Z_2) \cap B' = \emptyset, N_G(Z_1)\cap V(H') =\emptyset$, and $H'$ is totally disconnected.

It follows that $Z_3$ is the set of isolated vertices in $G_z$ and $G_z\setminus Z_3$ is a connected bipartite graph with bipartition $(B'\cup Z_2, V(H')\cup Z_1\cup \{b\})$. Since this bipartite graph is a nontrivial component of $G_z$, it is in $W_2$. By Lemma $\ref{bipartite}$, it is just an edge. Thus, $H'=\emptyset$, $Z_1=Z_2=\emptyset$ and $|B'| = 1$.

Finally, since $G_b$ is a connected bipartite graph with bipartition $(V(H)\cup\{a\}, Z_3\cup \{z\})$, it is an edge by Lemma $\ref{bipartite}$. It follows that $V(H) = \emptyset$, i.e. $H=\emptyset$, a contradiction. The proof of the lemma is complete.
\end{proof}

We are now in position to prove the main result of this paper.

\begin{thm}\label{thmBuchsbaum}   Let   $G$ be  a simple graph. Then,  $I(G)^2$ is  Buchsbaum if and only if
$G$ is a triangle-free graph in $W_2$, or $G$ is isomorphic to one of $K_n$  ($n\ge 3$), $C_n^c$ ($n\ge 6$), $B_n$  ($n\geqslant 4$),  $Q_9, Q_{12}, P_{10}$ or $P_{12}$.
\end{thm}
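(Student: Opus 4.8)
The plan is to reduce the Buchsbaumness of $I(G)^2$ to the graph-theoretic classification already obtained. The main tool will be the known criterion (from \cite{HMT,HT}) that $I(G)^2$ is Buchsbaum if and only if $R/I(G)^2$ is generalized Cohen-Macaulay together with a surjectivity-of-canonical-maps condition; by Theorem \ref{gCM} the generalized Cohen-Macaulay hypothesis is equivalent to saying that $G$ is well-covered and every nontrivial component of $G_v$ is a triangle-free graph in $W_2$ for every vertex $v$. So first I would record that necessity of ``$G$ well-covered and locally nice'' is free, and that the examples $K_n$ ($n\geqslant 3$), $C_n^c$ ($n\geqslant 6$), $B_n$ ($n\geqslant 4$), $Q_9,Q_{12},P_{10},P_{12}$, and triangle-free graphs in $W_2$ can each be checked directly to satisfy the Buchsbaum criterion (for the $W_2$ triangle-free case this is \cite[Theorem 4.4]{HT}; for the sporadic graphs one invokes Theorem \ref{thmGorenstein}, since Gorenstein implies Cohen-Macaulay implies Buchsbaum, noting $C_n^c$, $Q_9$, $Q_{12}$, $P_{10}$, $P_{12}$ are Gorenstein; $K_n$ and $B_n$ need a separate small computation).

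For the converse, suppose $I(G)^2$ is Buchsbaum. Then $G$ is well-covered and, for every vertex $v$, each nontrivial component of $G_v$ lies in $W_2$, so in particular $G$ is locally triangle-free. If $G$ is disconnected it is a disjoint union of edges (a well-covered locally-triangle-free disconnected graph forces each component to satisfy the hypotheses, and one reduces to the bipartite/edge case), which is a triangle-free $W_2$ graph; if $G$ is a join of two proper subgraphs, then $G$ is automatically Buchsbaum-friendly, and I would argue using Proposition \ref{small-cases} and Lemma \ref{disconnected} that such a $G$ with $\alpha(G)\leqslant 2$ is $K_n$ or $C_n^c$, while $\alpha(G)\geqslant 3$ combined with being a nontrivial join contradicts the local conditions unless $G$ is $B_n$ (which is the join $K_2 * (\text{path-like well-covered graph})$ — this is where I expect some care). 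So assume $G$ is connected, not a join, with $\alpha(G)\geqslant 3$.

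Now I would apply Lemma \ref{discreteW2}: since $G$ is well-covered, locally triangle-free, connected, not a join, with $\alpha(G)\geqslant 3$, and each nontrivial component of each $G_v$ is in $W_2$, the lemma gives that $G$ itself is in $W_2$. Hence $G$ is a locally triangle-free graph in $W_2$ with $\alpha(G)\geqslant 3$ that is not a join, so Theorem \ref{main-theorem} applies: $G$ is either a triangle-free graph in $W_2$ or one of $Q_9,Q_{12},P_{10},P_{12}$. Combining this with the small-$\alpha$ and join cases (yielding $K_n$, $C_n^c$, $B_n$) completes the list, matching the statement. The one remaining subtlety is to make sure no triangle-free graph in $W_2$ fails to have $I(G)^2$ Buchsbaum — but that direction is exactly \cite[Theorem 4.4]{HT}, so there is nothing to prove there.

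The step I expect to be the genuine obstacle is the treatment of the join case, specifically pinning down that the only ``new'' graph produced by allowing $G$ to be a join (beyond $K_n$ and $C_n^c$) is the family $B_n$: one must show that $B_n = K_2 * G'$ for an appropriate triangle-free $W_2$ graph $G'$ on a path-type skeleton, verify $B_n$ is well-covered with the right local structure, and conversely rule out every other join. The second, more bookkeeping-heavy obstacle is verifying the Buchsbaum (as opposed to merely generalized Cohen-Macaulay) property for $K_n$ and $B_n$ directly, since these are not Cohen-Macaulay and not covered by Theorem \ref{thmGorenstein}; here I would either cite the relevant computation of local cohomology modules and the canonical map from \cite{HMT} or carry out the explicit check that the canonical maps $\ext^i_R(R/\m, R/I(G)^2)\to H^i_\m(R/I(G)^2)$ are surjective for $i<\dim R/I(G)^2$.
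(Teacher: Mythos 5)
There is a genuine gap, concentrated in the cases $\alpha(G)\leqslant 2$ and in the choice of Buchsbaum criterion. The paper does not split into ``disconnected / join / connected non-join'' as you do; it splits by the independence number. For $\alpha(G)=2$ it invokes Minh--Nakamura \cite[Theorem 4.12]{MN}, which says that for a two-dimensional squarefree monomial ideal $I_\Delta$, $I_\Delta^2$ is Buchsbaum precisely when $\Delta$ is a cycle or a path; since $\Delta(G)=G^c$ when $\alpha(G)=2$, this single citation both produces the families $C_n^c$ and $B_n$ and certifies their Buchsbaumness. Your plan to extract $B_n$ from the join case cannot work: $B_n^c$ is a path and $(C_n^c)^c=C_n$ is a cycle, both connected, so by Lemma \ref{disconnected} neither $B_n$ nor $C_n^c$ ($n\geqslant 6$) is a join of two proper subgraphs. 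Thus your case analysis would never reach these graphs, and the ``separate small computation'' you defer for $B_n$ (which is neither Cohen--Macaulay nor covered by Theorem \ref{thmGorenstein}) is exactly the nontrivial content that \cite{MN} supplies. A smaller error of the same kind: a disconnected graph with $I(G)^2$ Buchsbaum need not be a disjoint union of edges (two disjoint pentagons give a triangle-free graph in $W_2$).

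The second gap is the criterion you work with. For $\alpha(G)\geqslant 3$ the paper uses \cite[Theorem 3.12]{HMT}: $I(G)^2$ is Buchsbaum if and only if $G$ is Cohen--Macaulay and $I(G_v)^2$ is Cohen--Macaulay for every vertex $v$. This does double duty: in the necessity direction, Cohen--Macaulayness forces $\Delta(G)$ to be connected, so $G$ is not a join by Lemma \ref{disconnected} and Lemma \ref{discreteW2} plus Theorem \ref{main-theorem} apply; in the sufficiency direction, for $Q_9,Q_{12},P_{10},P_{12}$ one only has to note that $G$ is Gorenstein (hence Cohen--Macaulay) and that each $G_v$ is triangle-free in $W_2$, so each $I(G_v)^2$ is Cohen--Macaulay by \cite[Theorem 4.4]{HT}. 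Your framing via Theorem \ref{gCM} plus ``surjectivity of the canonical maps'' leaves that surjectivity condition entirely unanalyzed, and your assertion that a join is ``automatically Buchsbaum-friendly'' is unsupported. The part of your outline that does match the paper --- Lemma \ref{discreteW2} to get $G\in W_2$, then Theorem \ref{main-theorem}, then the Gorenstein argument for the sporadic graphs --- is correct, but without \cite{MN} for $\alpha(G)=2$ and \cite[Theorem 3.12]{HMT} for $\alpha(G)\geqslant 3$ (or substitutes for them) the proof does not close.
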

\begin{proof} If $\alpha(G)=1$, then $G$ is a complete graph, and so  $I(G)^2$ is always Buchsbaum.

If $\alpha(G)=2$, by \cite[Theorem 4.12]{MN}, $I(G)^2$ is Buchsbaum if and only if $\Delta(G)$ is an $n$-cycle, or an $(n-1)$-path ($n\geqslant 5$). Therefore,  $G$ is isomorphic to one of $B_n$  ($n\ge 4$), or $C_n^c$ ($n\ge 5$).

Assume that $\alpha(G) \geqslant 3$. By \cite[Theorem 3.12]{HMT} we have  $I(G)^2$ is Buchsbaum if and only if  $G$ is Cohen-Macaulay and $I(G_v)^2$ is Cohen-Macaulay for all $v\in V(G)$.

If $ I(G)^2$ is Buchsbaum,  then $G$ is Cohen-Macaulay, and then $G$ is well-covered. Recall that $I(G_v)^2$ is Cohen-Macaulay for every vertex $v$. Since $I(G_v)^2$ is Cohen-Macaulay if and only if  every nontrivial connected  component of $G_v$ is triangle-free  in $W_2$ due to \cite[Theorem $4.4$]{HT}, we have $G$ is a well-covered locally triangle-free graph. Since $G$ is Cohen-Macaulay, $\Delta(G)$ is connected by \cite[Proposition 3.3 in Chapter 0 and Corollary 4.2 in Chapter II]{S}. Thus, $G$ is not a join of its two proper subgraphs by Lemma $\ref{disconnected}$. By Lemma \ref{discreteW2}, $G$ is in $W_2$. Together with Theorem $\ref{main-theorem}$, we have  $G$ is either a triangle-free graph or isomorphic to one of $Q_9, Q_{12}, P_{10}$ or $P_{12}$.

Conversely, assume first that $G$ is a triangle-free graph in $W_2$. Then, $I(G)^2$ is Cohen-Macaulay (and so is Buchsbaum) by \cite[Theorem $4.4$]{HT}, and the theorem follows.

If $G$  is isomorphic to one of $Q_9, Q_{12}, P_{10}$ and $P_{12}$, then  $G$ is a locally triangle-free Gorenstein graph by Theorem \ref{thmGorenstein}. In particular, $G$ is a Cohen-Macaulay graph and in $W_2$.  Thus, for each vertex $v$ we have $G_v$ is a triangle-free graph in $W_2$, so $I(G_v)^2$ is Cohen-Macaulay by \cite[Theorem $4.4$]{HT}. Thus, $I(G)^2$ is Buchsbaum, and the proof of the theorem is complete.
\end{proof}

\subsection*{Acknowledgment} The authors are    supported by the NAFOSTED (Vietnam) under grant number 101.04-2015.02. The second author is also partially supported by VAST.DLT 01/16-17.  We would like to thank the Korea Institute for Advanced Study  for financial support  and  hospitality  during our visit  in 2016, when we started to work on this paper.


\begin{thebibliography}{HT}

\bibitem{BM} J. A. Bondy and U. S. R. Murty, Graph Theory,  Graduate Texts in Mathematics,  vol. 244,  Springer, New York, 2008.


\bibitem{CST} N.T. Cuong, P. Schenzel and N.V. Trung, {\it Verallgemeinerte Cohen-Macaulay-Moduln}, Math. Nachr. {\bf 85} (1978) 57-73.

\bibitem {FHN} A. Finbow, B. Hartnell and R. Nowakbwski, {\it A characterisation of well covered graphs of girth $5$ or greater}, J. Combin. Theory, Ser. B {\bf 57}(1) (1993), 44-68.

\bibitem{GT} S. Goto and Y. Takayama, {\it Stanley-Reisner ideals whose powers have finite length cohomologies}, Proc. Amer. Math. Soc. {\bf 135}(8) (2007), 2355-2364.

\bibitem{HH} J. Herzog and T. Hibi, {\it Distributive lattices, bipartite graphs and Alexander duality}, J. Algebraic Combin. {\bf 22}(3) (2005),   289-302.

\bibitem {HHZ}  J. Herzog, T. Hibi and X. Zheng, {\it Cohen-Macaulay chordal graphs}, J. Combin. Theory Ser. A {\bf 113}(5) (2006),   911-916.

\bibitem{Ho} D. T. Hoang, {\it Cohen-Macaulayness of saturation of  the second power of edge ideals},   Vietnam J. Math. {\bf 44}(4) (2016), 649-664. 

 
\bibitem{HMT} D. T. Hoang, N. C. Minh and T. N. Trung, {\it Combinatorial characterizations of the Cohen-Macaulayness of the second power of edge ideals}, J. Combin. Theory Ser. A {\bf 120}(5) (2013), 1073-1086.

\bibitem {HMT2} D. T. Hoang, N. C. Minh and T. N. Trung, {\it Cohen-Macaulay graphs with large girth}, J. Algebra and Its Applications  {\bf 14}(7) (2015),   16 pages. Doi: 10.1142/S0219498815501121.

\bibitem{HT} D. T. Hoang  and T. N. Trung, {\it A characterization of  triangle-free Gorenstein graphs and Cohen-Macaulayness of second powers of edge ideals}, J. Algebraic Combin. {\bf 43}(2) (2016),    325-338.

\bibitem {LT} F. Luo and R. Stong, {\it Combinatorics of triangulations of 3-manifolds}, Trans. Amer. Math. Soc. {\bf 337}(2) (1993),   891-906.

\bibitem {MN}  N.C. Minh and  Y. Nakamura, {\it Buchsbaumness of ordinary powers of two dimensional square-free monomial ideals}, J. Algebra {\bf 327}  (2011), 292-306.

\bibitem {MT}  N. C. Minh and N. V. Trung, {\it Cohen-Macaulayness of monomial ideals and symbolic powers of Stanley-Reisner ideals}, Adv. Mathematics {\bf 226}(2) (2011), 1285-1306.

\bibitem {P}  M. D. Plummer,  {\it Well-covered graphs: Survey}, Quaestiones Math. {\bf 16}(3) (1993),   253-287.

\bibitem{RTY1} G. Rinaldo, N. Terai and K. Yoshida, {\it Cohen-Macaulayness for symbolic power ideals of edge ideals},  J. Algebra {\bf 347}  (2011), 1-22.

\bibitem{RTY2} G. Rinaldo, N. Terai and K. Yoshida,  {\it On the second powers of Stanley-Reisner ideals}, J. Commut. Algebra {\bf 3}(3) (2011),  405-430.

\bibitem {S} R. Stanley, Combinatorics and Commutative Algebra, 2nd  edition, Progress in Mathematics, vol. 41, Birkh$\ddot{\text{a}}$user Boston, Inc., Boston, 1996.
 
\bibitem{SV} J. St$\ddot{\text{u}}$ckrad and W. Vogel, Buchsbaum Rings and Appications, Springer-Verlag, Berlin, Heidelberg, New York, 1986.

\bibitem {TT} N. Terai and N. V. Trung, {\it Cohen-Macaulayness of large powers of Stanley-Reisner ideals}, Adv. Mathematics {\bf 229}(2) (2012), 711-730.

 \bibitem{Vi} R. Villarreal, Monomial Algebras, Monographs and Textbooks in Pure and Applied Mathematics,  vol.   238, Marcel Dekker, Inc., New York, 2001.

\end{thebibliography}
\end{document}